\newcommand{\norm}[1]{\left\Vert #1 \right\Vert}
\newcommand{\R}{\mathbb{R}}
\DeclareMathOperator{\sgn}{sgn}
\newtheorem{theorem}{Theorem}
\newtheorem{lemma}{Lemma}
\newtheorem{corollary}{Corollary}
\theoremstyle{definition}
\newtheorem{definition}{Definition}
\theoremstyle{remark}
\newtheorem{remark}{Remark}
\title
[
	Well-posedness for a Whitham--Boussinesq system
]
{
	Well-posedness for a Whitham--Boussinesq system
	with surface tension
}
\author{ Evgueni Dinvay }
\email{ Evgueni.Dinvay@uib.no }
\address
{
	Department of Mathematics
	\\
	University of Bergen
	\\
	PO Box 7803
	\\
	5020 Bergen
	\\
	Norway
}
\subjclass[2010]{35Q53, 35Q55, 35A01} %, 35Q20, 35G25, 35D99
\begin{document}

\begin{abstract} 
We regard the Cauchy problem for
a particular Whitham--Boussinesq system
modelling surface waves
of an inviscid incompressible fluid layer.
The system can be seen as a weak nonlocal
dispersive perturbation of the shallow water system.
The proof of well-posedness relies
on energy estimates.
However, due to the symmetry lack of the nonlinear part,
in order to close the a priori estimates
one has to modify the traditional energy norm in use.
Hamiltonian conservation provides with global
well-posedness at least for small initial data
in the one dimensional settings.
\end{abstract}

\maketitle

%%%%%%%%%%%%%%%%%%%%%%%%%%%%%%%%%%%%%%%%%%%%%%%%%%%%%%%%%%%%%%%%%%%%%%%%%%%%%
\section{Introduction}
%%%%%%%%%%%%%%%%%%%%%%%%%%%%%%%%%%%%%%%%%%%%%%%%%%%%%%%%%%%%%%%%%%%%%%%%%%%%%
\setcounter{equation}{0}
%%%%%%%%%%%%%%%%%%%%%%%%%%%%%%%%%%%%%%%%%%%%%%%%%%%%%%%%%%%%%%%%%%%%%%%%%%%%%

Consideration is given to the following one-dimensional
Whitham-type system
\begin{equation}
\label{capillarity_sys}
\left\{
\begin{aligned}
	\partial_t \eta &=
	- \partial_x v - i \tanh D (\eta v)
	, \\
	\partial_t v &=
	- i \tanh D (1 + \varkappa D^2) \eta - i \tanh D v^2 / 2
	,
\end{aligned}
\right.
\end{equation} 
where $D=-i\partial_x$
and $\tanh D$ are Fourier multiplier operators in
the space of tempered distributions $\mathcal S'(\mathbb R)$.
The positive parameter $\varkappa$ stands for the surface tension here.
The space variable is $x \in \mathbb R$ and the time variable
is $t \in \mathbb R$.
The unknowns $\eta$, $v$ are real valued functions of these
variables.
We pick the initial values $\eta(0)$, $v(0)$
corresponding to the time moment $t = 0$
in Sobolev spaces as follows
\begin{equation}
\label{capillarity_data}
	\eta(0) = \eta_0 \in H^{s+1/2}(\R)
	, \qquad
	v(0) = v_0 \in H^s(\R)
	,
\end{equation} 
where $s \geqslant 1/2$.
System \eqref{capillarity_sys}
has the Hamiltonian structure
\[
	\partial_t (\eta, v)^T = \mathcal J \nabla \mathcal H(\eta, v)
\]
with the skew-adjoint matrix
\[
	\mathcal J
	=
	\begin{pmatrix}
		0 & - i \tanh D
		\\
		- i \tanh D & 0
	\end{pmatrix}
\]
and the energy functional
\begin{equation}
\label{Hamiltonian}
	\mathcal H(\eta, v)  = \frac 12 \int
	\left(	
		\eta^2 + \varkappa (\partial_x \eta)^2
		+ v \frac{D}{\tanh D} v
		+ \eta v^2
	\right)
	dx
\end{equation}
well defined on $H^1 \times H^{1/2}$.
The latter conserves on solutions together with momentum
$\mathcal I(\eta, v)$ that has the same view
as in the pure gravity case
\begin{equation*}
	\mathcal I(\eta, v)  = \int
	\eta \frac{D}{\tanh D} v dx
	.
\end{equation*}

In case of the trivial surface tension $\varkappa = 0$,
System \eqref{capillarity_sys} was proposed
in \cite{Dinvay_Dutykh_Kalisch}
as an approximate model for the study of water waves
to provide a two-directional alternative to
the well-known Whitham equation \cite{Whitham}.
The latter was proved to be consistent
with the KdV equation \cite{Klein_Linares_Pilod_Saut}
in the long wave regime \cite{Lannes}.
We also refer to \cite{Duchene_Israwi_Talhouk} for another
version of the fully-dispersive Boussinesq type.
Importance of such models is supported
by experiments \cite{Carter}.
The unknown $\eta$ denotes the deflection
of the free surface from its equilibrium position,
corresponding to the vertical level $z = 0$.
The bottom is assumed to be flat and located
at the level $z = -1$.
The variable $v$ is associated with the free surface velocity
as explained in \cite{Dinvay_Dutykh_Kalisch}.

The initial value problem
for Model \eqref{capillarity_sys}
was studied in \cite{Dinvay, Dinvay_Tesfahun}
in the case of vanishing surface tension $\varkappa = 0$.
In the same framework existence of solitary waves
was proved in \cite{Dinvay_Nilsson}.
A natural extension of the existing results is
to consider the case of non-trivial
capillarity $\varkappa > 0$.
Note that the term $1 + \varkappa D^2$ could be applied
to $-v_x$ in the first equation instead,
as it is done in \cite{Kalisch_Pilod},
for example,
to regularise the system regarded
in \cite{Pei_Wang}.
However, the case regarded here is physically more relevant
\cite{Dinvay_Moldabayev}.
Indeed, repeating the Hamiltonian perturbation analysis
from \cite{Dinvay_Dutykh_Kalisch} to the full Hamiltonian
with the surface tension, that can be found in
\cite{Dinvay_Moldabayev},
one naturally arrives to
\eqref{capillarity_sys}, \eqref{Hamiltonian}.
It turns out that surface tension changes the nature of the equations.
Indeed, the multiplication operator $(\eta, v) \mapsto v \eta$
is not bounded in the natural Sobolev-based energy space arising
from the linear equations, that is
\(
	H^{s + 1/2}(\mathbb{R}) \times H^s(\mathbb{R})
	.
\)
And moreover,  the dispersive properties of
the corresponding linear semigroup are insufficient to counterbalance
the loss of 1/2 derivatives.
%We have $1/2$ loss of regularity here,
%which means that System \eqref{capillarity_sys} is quasilinear.
As a result the proof of well-posedness demands a technique
different from the one used in \cite{Dinvay_Tesfahun}.

As to additional initial conditions, apart from
inclusions given in \eqref{capillarity_data}, one
has to impose a restriction essentially similar
to the one used in \cite{Dinvay_Tesfahun},
namely, smallness of the 
$H^1 \times H^{1/2}$-norm of $(\eta_0, v_0)$.
This is important for the global-in-time existence.
The meaning of this condition is that the
total energy
$\mathcal H(\eta_0, v_0)$ should be positive and not too big.
We point out that this condition cannot be significantly
weakened even for the proof of the local result,
which is also different from the non-capillarity situation.
More precisely, for the local regular
($s$ is large enough in \eqref{capillarity_data})
well-posedness result
it is enough to assume non-cavitation instead.

\begin{definition}
\label{noncavitation_definition}
	Let $d = 1,2$.
	We say that elevation
%	$\eta \in C([0, T]; H^1(\mathbb R))$
	\(
		\eta \in C \left( [0, T];
		L^{\infty}\left( \mathbb R^d \right) \right)
	\)
	satisfies the non-cavitation condition
	if there exist $h, H > 0$ such that
	$H \geqslant \eta \geqslant h - 1$ on $\mathbb R^d \times [0, T]$.
	Analogously,
	one defines non-cavitation at a particular time moment.
\end{definition}

The non-cavitation condition
is a physical condition meaning that
the elevation of the wave should not touch the bottom of the fluid
for System \eqref{capillarity_sys} to be a relevant model.
For convenience we have also included boundedness from above
in this definition.
We exploit the definition for providing with more general
local existence formulation at high regularity level.
However, in the low regularity case this condition
cannot be controlled without imposing a stronger assumption,
as we shall see below.
We turn now to the formulation of the main results.

\begin{theorem}
\label{capillarity_theorem_local}
	Let $s > 3/2$.
	Suppose that the initial data \eqref{capillarity_data}
	satisfies the noncavitation condition.
	Then there exist $T > 0$ and a unique solution
	\[
		( \eta, v ) \in C \left( [0, T];
		H^{s + 1/2}(\mathbb{R}) \times H^s(\mathbb{R}) \right)
		\cap C^1 \left( (0, T);
		H^{s - 1}(\mathbb{R}) \times H^{s - 3/2}(\mathbb{R}) \right)
	\]
	of System \eqref{capillarity_sys}
	with the initial data $( \eta_0, v_0 )$.
	The time moment $T$ depends on $s$, $\varkappa$ and
	the norm
	\(
		\norm{ \eta_0, v_0 } _{ H^{s + 1/2} \times H^s }
		.
	\)
	With respect to the capillarity and the initial data norm,
	the time of existence $T$ is a non-increasing function.
	Moreover, the solution depends continuously on
	the initial data with respect to
	\(
		C \left( H^{s + 1/2} \times H^s \right)
	\)-norm.
%	on any finite time interval $[0, T]$.
\end{theorem}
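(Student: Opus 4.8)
The plan is to prove the theorem by an energy method applied to a regularised version of \eqref{capillarity_sys}, with the entire difficulty concentrated in deriving a closed a~priori estimate in $H^{s+1/2}(\R)\times H^s(\R)$. The natural energy dictated by the linear part is, up to equivalence, $\norm{\eta}_{H^{s+1/2}}^2+\norm{v}_{H^s}^2$; the weights $1+\varkappa\xi^2$ on $\eta$ and $\xi/\tanh\xi$ on $v$ are exactly those rendering the linearised evolution skew-adjoint with respect to the associated inner product, and they force the half-derivative gap between the two slots. As pointed out above, this energy cannot be closed directly: the term $-i\tanh D(\eta v)$ in the first equation controls $\eta v$ only in $H^s$, while $\eta$ is measured in $H^{s+1/2}$, so the high frequencies of $v$ paired against $\eta$ produce an uncontrolled loss of half a derivative. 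To counter this I would introduce the modified, solution-dependent energy of shallow-water type
\[
	E_s(\eta,v)=\frac12\int\Big((1+\varkappa D^2)\angles{D}^{s-1/2}\eta\cdot\angles{D}^{s-1/2}\eta+(1+\eta)\tfrac{D}{\tanh D}\angles{D}^{s-1/2}v\cdot\angles{D}^{s-1/2}v\Big)\d x,
\]
whose coercivity $E_s\simeq\norm{\eta}_{H^{s+1/2}}^2+\norm{v}_{H^s}^2$ is guaranteed precisely by the non-cavitation bound $1+\eta\geqslant h>0$.

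Differentiating $E_s$ along the flow, the purely linear contributions cancel by self-adjointness of $\tanh D$ and the choice of weights. The decisive point is that the contribution of differentiating the weight $1+\eta$, after substituting $\partial_t\eta=-\partial_x v-i\tanh D(\eta v)$ and integrating by parts, removes the surviving top-order nonlinear terms — both the transport term coming from $-i\tanh D(v^2/2)$ and the cross term coming from $-i\tanh D(\eta v)$ — by exactly the algebraic cancellations that render the shallow-water energy $\tfrac12\int(\eta^2+(1+\eta)v^2)\,\d x$ conserved, now performed at order $s$ and modulo commutators. Since $-i\tanh D$ is skew-adjoint and $\tanh D$ has bounded symbol, every remaining commutator is of Kato--Ponce type and is estimated in terms of $\norm{\eta}_{W^{1,\infty}}$ and $\norm{v}_{W^{1,\infty}}$; this is where $s>3/2$ enters, through $H^{s-1}\hookrightarrow L^\infty$. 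One is thereby led to a differential inequality $\tfrac{d}{dt}E_s\leqslant C(E_s)E_s$ with $C$ nondecreasing, so comparison with the associated ordinary differential equation yields an existence time $T$ depending only on $\norm{\eta_0,v_0}_{H^{s+1/2}\times H^s}$, $s$ and $\varkappa$, nonincreasing in both the data norm and $\varkappa$. Along the way one verifies, using $\eta\in C([0,T];H^{s+1/2})\hookrightarrow C([0,T];L^\infty)$, that $\eta$ remains in the non-cavitation regime after possibly shrinking $T$.

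For the construction I would regularise \eqref{capillarity_sys}, for example by Friedrichs mollifiers inserted so as not to spoil the above cancellation, obtaining a family of ordinary differential equations in $H^{s+1/2}\times H^s$ solvable by Picard iteration; the a~priori estimate, being uniform in the regularisation parameter, furnishes a uniform existence time and uniform bounds, after which a compactness argument produces a solution in $C([0,T];H^{s+1/2}\times H^s)$. Uniqueness follows from the same $(1+\eta)$-weighted estimate for the difference of two solutions, carried out one regularity level below the energy space, where the structure again keeps the bound closed and Gronwall forces identity. The $C^1$ regularity in time is then read off directly from the two equations: $\partial_t\eta\in H^{s-1}$ because $\partial_x v\in H^{s-1}$ and $\tanh D(\eta v)\in H^s$, while $\partial_t v\in H^{s-3/2}$ because $\tanh D(1+\varkappa D^2)\eta\in H^{s-3/2}$ and $\tanh D(v^2)\in H^s$.

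The hardest part will be continuous dependence in the strong $C([0,T];H^{s+1/2}\times H^s)$ topology. Because the system is quasilinear and the estimates lose derivatives, the solution map cannot be expected to be Lipschitz at the top level, so I would use the Bona--Smith technique: approximate the data by smoothed data, control differences of the corresponding solutions by the low-level difference estimate, and combine the uniform high-level bounds with the $H^{s+1/2}\times H^s$-continuity of the smoothing to upgrade weak convergence to strong convergence, thereby establishing continuity of the flow. Throughout, the single recurring obstacle is the half-derivative loss in the first equation; coercivity, the closed a~priori bound, uniqueness and the Bona--Smith argument all hinge on the non-cavitation-stabilised weight $1+\eta$ restoring the symmetry that the nonlinearity lacks.
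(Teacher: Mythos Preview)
Your strategy is essentially that of the paper: a modified energy containing a cubic $\eta$-weighted term to absorb the half-derivative loss, Kato--Ponce commutator estimates to close the bound, regularisation plus a priori control for existence, a difference estimate at lower regularity for uniqueness, and Bona--Smith for continuous dependence. The overall architecture matches.

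The one substantive difference is the form of the modifier. The paper takes
\[
	E^s(\eta,v)=\tfrac12\norm{\eta,v}_{H_\varkappa^{s+1/2}\times H^s}^2+\tfrac12\int\eta\bigl(J^{s-1/2}v\bigr)^2,
\]
i.e.\ it leaves the $K^{-1}$ weight in the quadratic part and places the $\eta$-weight only on $(J^{s-1/2}v)^2$. Your version applies the full $(1+\eta)\,D/\tanh D$ weight to the $v$-slot. These are equivalent in spirit (both are order-$s$ versions of the Hamiltonian \eqref{Hamiltonian}), but the paper's choice is slightly cleaner: coercivity under noncavitation is immediate, and only \emph{one} nonlinear term (the capillarity contribution $I_1$ in Lemma~\ref{energy_lemma}) needs the cancellation, because $\tanh D$ is bounded and the $v^2$ and non-capillary $\eta v$ contributions are estimated directly. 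With your $K^{-2}$ factor the identity $K^{-2}\tanh D=D$ turns those harmless terms back into genuine transport terms, so you must invoke the full shallow-water cancellation; this works, but costs more bookkeeping, and the coercivity of $\int(1+\eta)(K^{-2}J^{s-1/2}v)J^{s-1/2}v$ under noncavitation alone requires a short extra argument since $K^{-2}$ does not commute with multiplication by $1+\eta$.

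On regularisation, the paper uses a parabolic damping $-\varkappa\mu|D|^p$ with $p\in(1/2,1]$, checking explicitly (Lemmas~\ref{regularised_estimate_lemma}--\ref{energy_dissipation_lemma}) that this range is large enough for a contraction argument yet small enough not to spoil the energy estimate; your Friedrichs-mollifier alternative is fine provided you verify the analogous compatibility with the cancellation, which you flag but do not detail.
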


It is worth to emphasize here that
the time of existence does not shrink as the surface
tension parameter goes to zero.
Making a bit stronger assumption on the initial data \eqref{capillarity_data},
one obtains a stronger result.

%\textcolor{red}{}
\begin{theorem}
\label{capillarity_theorem}
	Let $s > 1/2$.
	For any $\eta_0 \in H^{s + 1/2}(\mathbb{R})$
%	satisfying the non-cavitation condition
	and
	$v_0 \in H^s(\mathbb{R})$
	having sufficiently small
	$H^1 \times H^{1/2}$-norm
	there exists a unique global solution
	\[
		( \eta, v ) \in C \left( [0, \infty);
		H^{s + 1/2}(\mathbb{R}) \times H^s(\mathbb{R}) \right)
		\cap C^1 \left( (0, \infty);
		H^{s - 1}(\mathbb{R}) \times H^{s - 3/2}(\mathbb{R}) \right)
	\]
	of System \eqref{capillarity_sys}
	with the initial data $( \eta_0, v_0 )$.
	Moreover, the solution depends continuously on
	the initial data with respect to
	\(
		C \left( H^{s + 1/2} \times H^s \right)
	\)-norm
	on any finite time interval $[0, T]$.
\end{theorem}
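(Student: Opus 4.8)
The plan is to derive the global, low-regularity statement from the high-regularity local theory of Theorem~\ref{capillarity_theorem_local} by combining two a priori bounds with an approximation argument. The first ingredient is the coercivity of the Hamiltonian. Splitting $\mathcal H = Q + \frac12\int\eta v^2\d x$ into its quadratic part $Q(\eta,v)=\frac12\int\bigl(\eta^2+\varkappa(\partial_x\eta)^2+v\tfrac{D}{\tanh D}v\bigr)\d x$ and its cubic part, one sees that $Q$ is equivalent to the square of the $H^1\times H^{1/2}$-norm, since the symbol $\xi/\tanh\xi$ is comparable to $\langle\xi\rangle$, whereas the cubic term satisfies $\bigl|\int\eta v^2\d x\bigr|\lesssim\|\eta\|_{H^1}\|v\|_{H^{1/2}}^2$ by the embedding $H^1\hookrightarrow L^\infty$ valid in one dimension. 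Hence, as long as the $H^1\times H^{1/2}$-norm stays small, $\mathcal H$ controls it from above and below. As $\mathcal H$ is conserved on solutions, a standard continuity/bootstrap argument then propagates the smallness of the $H^1\times H^{1/2}$-norm for all times; in particular $\|\eta\|_{L^\infty}$ remains strictly below $1$, so the non-cavitation condition of Definition~\ref{noncavitation_definition} holds automatically and uniformly in $t$.

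The second and decisive ingredient is an a priori estimate for the higher norm. Writing $\Lambda=\langle D\rangle$, the energy $\frac12\int\bigl(\eta\,\Lambda^{2s-1}(1+\varkappa D^2)\eta+v\,\Lambda^{2s-1}\tfrac{D}{\tanh D}v\bigr)\d x$ is equivalent to $\|\eta\|_{H^{s+1/2}}^2+\|v\|_{H^s}^2$ and is annihilated by the linear flow, but differentiating it along \eqref{capillarity_sys} leaves a nonlinear contribution whose leading part, morally $\int(\Lambda^{s+1/2}\eta)\,\tanh D\bigl(\eta\,\Lambda^{s+1/2}v\bigr)\d x$, loses half a derivative on $v$ and cannot be absorbed; this is exactly the failure foreseen in the introduction. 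To close the estimate I would modify the energy by a cubic correction — the order-$s$ analogue of the weight $1+\eta$ that symmetrises the shallow-water system — chosen so that, through $\partial_t\eta=-\partial_x v-i\tanh D(\eta v)$, its time derivative cancels the offending half-derivative term. The correction is itself bounded by $\|\eta\|_{L^\infty}$ times the energy, so smallness (equivalently non-cavitation) keeps the modified energy $\mathcal E_s$ equivalent to $\|\eta\|_{H^{s+1/2}}^2+\|v\|_{H^s}^2$. After this symmetrisation the remaining terms are genuine commutators, handled by Kato--Ponce-type estimates, and one obtains a differential inequality $\tfrac{d}{dt}\mathcal E_s\lesssim C\bigl(\|(\eta,v)\|_{H^1\times H^{1/2}}\bigr)\,\mathcal E_s$ whose constant is governed solely by the quantity bounded in the first step.

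Combining the two ingredients proves the theorem for $s>3/2$. For smooth, small data Theorem~\ref{capillarity_theorem_local} gives a local solution (non-cavitation being guaranteed by the first step); the first step keeps the $H^1\times H^{1/2}$-norm uniformly small, so the coefficient in the energy inequality is uniformly bounded and Gronwall yields $\mathcal E_s(t)\le\mathcal E_s(0)e^{Ct}$ on every finite interval. The $H^{s+1/2}\times H^s$-norm therefore cannot blow up in finite time, and the local solution extends to $[0,\infty)$.

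For $1/2<s\leqslant 3/2$ I would pass to the limit by a Bona--Smith argument: mollify the data to a sequence $(\eta_0^n,v_0^n)\in H^\infty$ converging in $H^{s+1/2}\times H^s$ and uniformly small in $H^1\times H^{1/2}$, solve globally for each $n$, and use the energy bounds of the second step (valid for all $s>1/2$, where the product estimate $H^{s+1/2}\cdot H^s\hookrightarrow H^s$ still holds) to extract a limit in $C([0,T];H^{s+1/2}\times H^s)$ on each finite interval. Uniqueness and continuous dependence follow from an estimate for the difference of two solutions, which — because subtracting the equations costs the same half derivative — is controlled in a norm one level below the energy, with a constant depending on the bounded high norms; combined with the smoothing property of the Bona--Smith mollification this upgrades the convergence from weak to strong in $H^{s+1/2}\times H^s$. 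The main obstacle throughout is the second step: finding the cubic correction that removes the half-derivative loss while leaving a residual coefficient controllable by the conserved $H^1\times H^{1/2}$-norm is precisely what makes the global-in-time bound, and hence the whole scheme, go through.
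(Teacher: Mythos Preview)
Your outline captures the right modified-energy strategy --- the cubic corrector $\frac12\int\eta\bigl(J^{s-1/2}v\bigr)^2$ is exactly what the paper uses, and it does cancel the dangerous $\int(J^{s-1/2}\partial_x\eta)\,\eta\,J^{s+1/2}v$ term modulo commutators. But the differential inequality you assert,
\[
	\frac{d}{dt}\mathcal E_s \lesssim C\bigl(\|(\eta,v)\|_{H^1\times H^{1/2}}\bigr)\,\mathcal E_s,
\]
is not obtainable, and this is the real gap. After the cancellation, several of the remaining terms are \emph{not} commutators: for instance the contribution of $-\frac{i}{2}\tanh D\,v^2$ to $\partial_t v$ yields
\(
	\frac{i}{2}\int(J^{s-1/2}|D|^{1/2}\sgn D\,v)\,J^{s-1/2}|D|^{1/2}(v^2)
	,
\)
which can only be bounded by $\|v\|_{H^s}^2\|v\|_{L^\infty}$ (and similarly for the cubic and quartic terms $I_6,I_7,I_8$ coming from the time derivative of the corrector). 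Since $H^{1/2}(\mathbb R)$ does not embed into $L^\infty$, the coefficient in front of $\mathcal E_s$ genuinely involves $\|v\|_{L^\infty}$, which the Hamiltonian does not control. Your Gronwall step therefore does not close, and with it the whole global-in-time claim collapses.

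The paper handles this as follows. For $1/2<s<1$ it uses the Brezis--Gallouet inequality $\|v\|_{L^\infty}\lesssim 1+\|v\|_{H^{1/2}}\sqrt{\log(1+\|v\|_{H^s})}$, turning the estimate into
\(
	\frac{d}{dt}E^s\lesssim\bigl(1+\log(3+E^s)\bigr)E^s
\)
and hence a \emph{double}-exponential bound $E^s(t)\le\exp(Ce^{Ct})$. For $s\ge 1$ the refined estimate (Lemma~\ref{persistence_of_regularity_lemma}) places the coefficient at the strictly lower level $H^{s+1/4}\times H^{s-1/4}$, and one descends by quarter-steps to the base case $s<1$. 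Without this logarithmic device there is no global bound at any regularity. A secondary difference is that the paper constructs solutions via a parabolic regularisation $-\varkappa\mu|D|$ (for which analogues of all the energy estimates are re-proved) rather than by invoking Theorem~\ref{capillarity_theorem_local} on mollified data; your route is viable in principle, but only once the Brezis--Gallouet step is in place.
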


%\begin{remark}
As we shall see below, the smallness of $H^1 \times H^{1/2}$-norm
plays an essential role in proving the following two statements.
The Cauchy problem \eqref{capillarity_sys}, \eqref{capillarity_data}
is locally well-posed for $1/2 < s \leqslant 3/2$.
The solution can be extended to the global one for any $s > 1/2$.
Whereas for the local result in the case $s > 3/2$,
it is enough to impose a weaker assumption, namely,
the noncavitation of $\eta_0$.
%\end{remark}

The proof is essentially based on the energy method,
that is natural to apply to quasilinear equations.
The scaling
\(
	H^{s + 1/2}(\mathbb{R}) \times H^s(\mathbb{R})
\)
is needed to rule out the linear terms,
after the differentiation the corresponding energy norm.
The main difficulty lies in the lack of symmetry
of the nonlinearity.
Indeed, a direct time differentiation
of the norm
\(
	\norm{\eta, v} _{ H^{s + 1/2} \times H^s }
\)
leads to the term
\(
	\int \left( J^{s - 1/2} \partial_x \eta \right)
	\eta J^{s + 1/2} v
	,
\)
where $J^{\sigma}$ stands for the Bessel potential
of order $-\sigma$
(see the proof of Lemma \ref{energy_lemma} below).
Note that this term cannot be handled
by integration by parts or commutator estimates, and so cannot be
estimated via the energy norm.
To overcome this difficulty we modify the energy norm
adding the cubic term
\(
	\int \eta \left( J^{s - 1/2 } v \right) ^2
	.
\)
The linear contribution of the derivative of this term
will cancel out the mentioned inconvenient term.
Meanwhile, the contribution coming from the nonlinear terms
can easily be controlled.
As we point out below a hint on the choice of the modifier
comes from Hamiltonian \eqref{Hamiltonian}.
Note that after adding the cubic term the energy loses
coercivity, and so one has to impose an additional condition.
Either the noncavitation for big $s$ or the smallness for small $s$
of the initial data, both propagating through the flow of
System \eqref{capillarity_sys}, is enough to ensure
that the modified energy is coercive.

%-----------------------------------------------
%2D case:
%
Additionally, consideration is also given to
a system posed on ${\R^{2+1}}$ of
the form
\begin{equation}
\label{wt2d}
\left\{
\begin{aligned}
  	\partial_t \eta + \nabla \cdot \mathbf v
  	& =
	- K^2 \nabla \cdot (\eta  \mathbf v)
	,
  	\\
  	\partial_t  \mathbf v + K^2 \nabla (1 + \varkappa |D|^2) \eta
 	&=
 	- K^2 \nabla \left( | \mathbf v|^2/2 \right)
 	,
\end{aligned}
\right.
\end{equation}
that is a direct two dimensional extension of
Model \eqref{capillarity_sys}.
Here $ \mathbf v= (v_1, v_2)\in \R^2$
is a curl free vector field, that is
$\nabla \times  \mathbf v   = 0$, and 
$$
	K = \sqrt{ \tanh|D| / |D|}
$$
with $D=-i \nabla$.
So the corresponding symbol
$
	K(\xi)
	=
	\sqrt{
		\tanh (|\xi|) / |\xi|
	}
	.
$
We complement \eqref{wt2d} with the initial data 
\begin{equation}
\label{data2d}
\eta(0)= \eta_0 \in H^{s+1/2} \left( \R^2 \right)
, \qquad
\mathbf v(0)= \mathbf v_0
\in H^s \left( \R^2 \right) \times H^s \left( \R^2 \right)
.
\end{equation} 
As above the variables $\eta$ and $\mathbf v$
stand for the surface elevation
and the surface fluid velocity, respectively.
The system enjoys the Hamiltonian structure
\[
	\partial_t (\eta,  \mathbf  v)^T
	=
	\mathcal J \nabla \mathcal H(\eta ,  \mathbf v)
\]
with the skew-adjoint matrix
\[
	\mathcal J
	=
	\begin{pmatrix}
		0 & - K^2 \partial_{x_1} & - K^2 \partial_{x_2}
		\\
		- K^2 \partial_{x_1} & 0 & 0
		\\
		- K^2 \partial_{x_2} & 0 & 0
	\end{pmatrix}
	,
\]
which in particular,
guarantees conservation of the energy functional
\begin{equation}
\label{Hamiltonian2}
	\mathcal H(\eta, \mathbf v)  = \frac 12 \int
	\left(	
		\eta^2 + \varkappa |\nabla \eta|^2
		+ \left| K^{-1} \mathbf v \right|^2
		+ \eta |\mathbf v |^2
	\right)
	dx
	.
\end{equation}
The noncavitation definition in the two dimensional problem
has exactly the same view
as in Definition \ref{noncavitation_definition} with the real line
$\mathbb R$ substituted by the plane $\mathbb R^2$.

\begin{theorem}
\label{capillarity_theorem2d}
	Let $s > 1$.
	Suppose that the initial data \eqref{data2d}
	has curl free velocity $\nabla \times  \mathbf v_0 = 0$
	and
	either has small enough
	$H^1 \times H^{1/2} \times H^{1/2}$-norm
	if $s \leqslant 2$ or
	satisfies the noncavitation condition
	if $s > 2$.
	Then there exist $T > 0$
	% depending only on the initial data
	and a unique solution
	\[
		( \eta, \mathbf v ) \in C \left( [0, T];
		H^{s + 1/2} \left( \mathbb R^2 \right) \times
		\left( H^s \left( \mathbb R^2 \right) \right)^2 \right)
		\cap C^1 \left( (0, T);
		H^{s - 1} \left( \mathbb R^2 \right) \times
		\left( H^{s - 3/2} \left( \mathbb R^2 \right) \right)^2 \right)
	\]
	of System \eqref{wt2d}
	associated with this initial data.
	The time of existence $T$ is a non-increasing
	function of the surface tension $\varkappa$ and
	the initial data norm
	\(
		\norm{ \eta_0, \mathbf v_0 } _{ H^{s + 1/2} \times H^s \times H^s }
		.
	\)
	Moreover, the solution depends continuously on
	the initial data with respect to
	\(
		C \left( H^{s + 1/2} \times H^s \times H^s \right)
	\)-norm.
\end{theorem}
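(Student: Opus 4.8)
The plan is to transcribe the energy-method argument behind the one-dimensional Theorems \ref{capillarity_theorem_local} and \ref{capillarity_theorem} into the vectorial setting, so I only highlight the points where passing from $\R$ to $\R^2$ forces a change. A preliminary observation is that the curl-free constraint is preserved by the flow: since $K^2$ is a radial Fourier multiplier it commutes with $\nabla$, so the right-hand side of the second equation in \eqref{wt2d} is a gradient, $\partial_t\mathbf v=-\nabla\big(K^2(1+\varkappa|D|^2)\eta+K^2|\mathbf v|^2/2\big)$, whence $\partial_t(\nabla\times\mathbf v)=0$ and $\nabla\times\mathbf v_0=0$ propagates. I would then construct smooth approximate solutions on a common time interval by the regularization behind Lemma \ref{energy_lemma}, for instance mollifying the nonlinearity, the uniform existence time being delivered by the a priori bound below.

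The heart of the matter is the a priori estimate for a modified energy. Differentiating $\|\eta\|_{H^{s+1/2}}^2+\|\mathbf v\|_{H^s}^2$ in time, the linear contributions cancel after integration by parts precisely because of the scaling $H^{s+1/2}\times H^s$ and the skew-adjointness of $\mathcal J$; the factor $1+\varkappa|D|^2$ enters this cancellation and accounts for the monotone dependence of $T$ on $\varkappa$. What survives at top order is the vectorial analogue of the unbalanced term $\int(J^{s-1/2}\partial_x\eta)\,\eta\,J^{s+1/2}v$ of Lemma \ref{energy_lemma}, which no integration by parts or commutator estimate can remove. Following the one-dimensional recipe I would add the cubic modifier $\int\eta\,|J^{s-1/2}\mathbf v|^2$, whose form is dictated by the cubic part $\int\eta|\mathbf v|^2$ of the Hamiltonian \eqref{Hamiltonian2}. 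The linear part of its time derivative, coming from the term $-K^2\nabla(1+\varkappa|D|^2)\eta$ in the $\mathbf v$-equation, is meant to cancel exactly the surviving top-order contribution, while $\partial_t\eta$ and the remaining quadratic pieces produce only terms bounded by $\|(\eta,\mathbf v)\|_{H^{s+1/2}\times H^s}^3$ through Kato--Ponce and Moser estimates, which are legitimate for $s>1$ since $H^s(\R^2)\hookrightarrow L^\infty$. Here the curl-free hypothesis is used to replace $\nabla(|\mathbf v|^2/2)$ by $(\mathbf v\cdot\nabla)\mathbf v$ so that this cancellation is genuinely exact.

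Coercivity of the modified energy is where the two-dimensional thresholds surface, since one must absorb $\int\eta\,|J^{s-1/2}\mathbf v|^2$ into a small fraction of $\|\mathbf v\|_{H^s}^2$. For $1<s\leqslant 2$ I would not attempt to control $\|\eta\|_{L^\infty}$, which is unavailable from $H^1(\R^2)$, but instead estimate $\int\eta\,|J^{s-1/2}\mathbf v|^2\leqslant\|\eta\|_{L^2}\,\|J^{s-1/2}\mathbf v\|_{L^4}^2\lesssim\|\eta\|_{H^1}\,\|\mathbf v\|_{H^s}^2$ using the Sobolev embedding $H^{1/2}(\R^2)\hookrightarrow L^4(\R^2)$; smallness of the $H^1\times H^{1/2}\times H^{1/2}$-norm then yields both directions of the norm equivalence. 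That smallness persists in time through conservation of the Hamiltonian \eqref{Hamiltonian2}, exactly as in Theorem \ref{capillarity_theorem}. For $s>2$ I would instead propagate the noncavitation condition: the bound $H\geqslant\eta\geqslant h-1$ is transported by controlling $\|\partial_t\eta\|_{L^\infty}$, which needs $\mathbf v\in W^{1,\infty}(\R^2)$, i.e. $H^s(\R^2)\hookrightarrow W^{1,\infty}(\R^2)$, hence the threshold $s>2$; the lower bound then dominates the negative part of the cubic term by $(1-h)\|\mathbf v\|_{H^{s-1/2}}^2$, leaving a coercive remainder, and the upper bound $H$ handles the reverse inequality.

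With a coercive modified energy in hand, Gr\"onwall's inequality closes the a priori bound on an interval $T$ depending only on $\varkappa$ and $\|(\eta_0,\mathbf v_0)\|_{H^{s+1/2}\times H^s}$ and non-increasing in each; a compactness argument then produces a solution in the stated class, the $C^1$-regularity being read off from \eqref{wt2d}. Uniqueness and continuous dependence I would get from an energy estimate for the difference of two solutions at one derivative lower, upgraded to continuity of the data-to-solution map in the full $H^{s+1/2}\times H^s$-topology by a Bona--Smith regularization. I expect the main obstacle to be the exactness of the top-order cancellation in the vectorial, curl-free setting: unlike in one dimension the inconvenient term and the linear part of the modifier's derivative are vector-valued quadratic forms, and one must check that the curl-free constraint forces their leading symbols to coincide identically rather than merely up to a commutator --- any residual mismatch would reintroduce a half-derivative loss that the dispersion of \eqref{wt2d} cannot absorb.
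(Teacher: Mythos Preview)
Your outline is essentially the paper's own argument: the same modified energy \eqref{energy2d} with cubic corrector $\int\eta\,|J^{s-1/2}\mathbf v|^2$, coercivity via smallness of the $H^1\times H^{1/2}\times H^{1/2}$-norm (controlled by conservation of \eqref{Hamiltonian2}) for $s\leqslant 2$ and via propagated noncavitation for $s>2$, and the Bona--Smith closure. The threshold explanations you give are exactly right.

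There is one point on which you and the paper diverge, and it is worth flagging. You locate the role of the curl-free hypothesis in the energy estimate itself, suggesting it is needed to make the top-order cancellation between the unbalanced term and the linear piece of the modifier's derivative exact. In fact that cancellation goes through unconditionally: after integrating by parts one has
\(
	I_1+I_5
	=
	\varkappa\int\bigl(J^{s-3/2}K^2|D|^2\nabla\eta\bigr)\cdot
	\bigl([J^{s+1/2},\eta]\mathbf v-[J,\eta]J^{s-1/2}\mathbf v\bigr)
\),
since $\nabla$ commutes with every radial multiplier, and the commutators are estimated by Kato--Ponce exactly as in Lemma~\ref{energy_lemma}. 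The identity $\nabla(|\mathbf v|^2/2)=(\mathbf v\cdot\nabla)\mathbf v$ is likewise not needed for the $I_3$ or $I_7$ analogues, which are handled by straightforward product estimates in $H^s(\R^2)$ for $s>1$. Where the paper actually uses $\nabla\times\mathbf v_0=0$ is in the construction step: it runs the parabolic regularisation \eqref{reg_capillarity_sys} and solves the regularised problem by contraction via the explicit semigroup, whose diagonalising matrix $\mathcal K$ in two dimensions is $3\times 2$ and admits a left inverse $\mathcal K^{-1}$ only on the curl-free subspace of $H^{s+1/2}\times (H^s)^2$. Your mollification-of-the-nonlinearity alternative would sidestep this particular issue, but you should then check separately that the mollified flow preserves $\nabla\times\mathbf v=0$ so that the limit remains curl-free; the paper's observation that $\partial_t\mathbf v$ is a gradient, which you reproduce, takes care of this for the unregularised system.
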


Note that the theorem has
the local character, in the opposite of the one dimensional case.
%
%-----------------------------------------------

\begin{remark}
	The same results hold in the periodic case as well.
	The proof is similar up to some small changes
	in the commutator estimates
	\cite{Kenig_Pilod}.
\end{remark}

In the next section some important inequalities are recalled.
In Section \ref{Modified_energy} we introduce
the modified energy and obtain the corresponding energy estimate
for System \eqref{capillarity_sys}.
In Section \ref{Uniqueness_type_estimate}
we obtain the energy estimate for the difference
of two solutions of System \eqref{capillarity_sys}.
Note that Sections \ref{Modified_energy},
\ref{Uniqueness_type_estimate}
%have formal character.
%They
provide with the motivation for studying
the parabolic regularisation later in
Section \ref{Parabolic_regularisation}, % the parabolic
%regularisation is studied and,
where the corresponding
energy estimate is deduced for the regularised system.
In Section \ref{A_priori_estimate} a priori estimates are obtained.
Finally, in Section \ref{Proof_of_Theorem} we comment on the last
steps in the proof of Theorem \ref{capillarity_theorem},
omitting only the thorough discussion of
the initial data regularisation.
%-----------------------------------------------
%2D case:
%
In Section \ref{Two_dimensional_problem} we discuss some peculiarities
of the two dimensional problem.
%
%-----------------------------------------------
%\textcolor{red}{
In the last section we study System \eqref{capillarity_sys}
with $\varkappa \ll 1$.
%}

%%%%%%%%%%%%%%%%%%%%%%%%%%%%%%%%%%%%%%%%%%%%%%%%%%%%%%%%%%%%%%%%%%%%%%%%%%%%%
\section{Preliminary estimates}
%%%%%%%%%%%%%%%%%%%%%%%%%%%%%%%%%%%%%%%%%%%%%%%%%%%%%%%%%%%%%%%%%%%%%%%%%%%%%
\setcounter{equation}{0}
%%%%%%%%%%%%%%%%%%%%%%%%%%%%%%%%%%%%%%%%%%%%%%%%%%%%%%%%%%%%%%%%%%%%%%%%%%%%%

We start this section by recalling
all the necessary standard notations.
For any positive numbers $a$ and $b$ we write
$a \lesssim b$ if there exists a constant $C$ independent
of $a, b$ such that $a \leqslant Cb$.
The Fourier transform is defined by the formula
\[
	\widehat{f}(\xi) = \mathcal F(f)(\xi) =
	\int	 f(x) e^{-i\xi x} dx
\]
on Schwartz functions.
By the Fourier multiplier operator $\varphi(D)$
with symbol $\varphi$ we mean
the line
\(
	\mathcal F \left( \varphi(D) f \right)
	=
	\varphi(\xi) \widehat{f}(\xi)
	.
\)
In particular, $D = -i\partial_x$ is
the Fourier multiplier associated with the symbol
$\varphi(\xi) = \xi$.
For any $\alpha \in \mathbb R$
the Riesz potential of order $-\alpha$ is
the Fourier operator $|D|^{\alpha}$
and
the Bessel potential of order $-\alpha$ is
the Fourier operator $J^{\alpha} = \langle D \rangle ^{\alpha}$,
where we exploit the notation
$\langle \xi \rangle = \sqrt{1 + \xi^2}$.
The $L^2$-based Sobolev space $H^{\alpha} (\mathbb R)$ is defined
by the norm
\(
	\norm{f} _{H^{\alpha}}
	=
	\norm{J^{\alpha}f} _{L^2}
	,
\)
whereas
the homogeneous Sobolev space $\dot H^{\alpha} (\mathbb R)$ is defined
by
\(
	\norm{f} _{\dot H^{\alpha}}
	=
	\norm{|D|^{\alpha}f} _{L^2}
	.
\)
We also exploit the notation
\(
	H^{\infty} (\mathbb R)
	=
	\cap _{\alpha \in \mathbb R} H^{\alpha} (\mathbb R)
	.
\)

Introduce the operator
\begin{equation}
\label{K_kappa}
	K _{ \varkappa }
	= \sqrt{( 1 + \varkappa |D|^2 ) \frac{\tanh |D|}{|D|} }
	,
\end{equation}
where $\varkappa$ is the surface tension.
Note that $\varkappa > 0$ is a fixed constant.
We implement the notation $K = K_0 = \sqrt{\tanh D / D}$
used in \cite{Dinvay_Tesfahun}.
Its inverse
$K^{-1}$ and $K _{ \varkappa }$ both have the domain
$H^{1/2} (\mathbb R)$
and are equivalent to the Bessel potential $J^{1/2}$.
Below we will need to compare $J$, $|D|$ and $K^{-2}$
and so we prove the following simple estimates.

\begin{lemma}
\label{Bessel_Riesz_comparison_lemma}
	For any $f \in L^2(\mathbb R)$ it holds that
	\[
		\left \lVert
			\left( J - K^{-2} \right) Df
		\right \rVert _{L^2}
		\leqslant
		\norm{ (J - |D|) Df } _{L^2}
		\leqslant
		\frac 12 \norm{ f } _{L^2}
		.
	\]
\end{lemma}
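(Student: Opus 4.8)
The plan is to reduce both estimates to pointwise bounds on Fourier symbols via Plancherel's identity, since $J$, $|D|$, $K^{-2}$ and $D$ are all Fourier multiplier operators. Their symbols are $\langle \xi \rangle = \sqrt{1+\xi^2}$, $\, |\xi|$, $\, |\xi|/\tanh|\xi|$ (extended continuously by $1$ at $\xi = 0$) and $\xi$, respectively. Thus the two claimed $L^2$-bounds are equivalent to the pointwise inequalities
\[
	\left| \left( \langle \xi \rangle - \frac{|\xi|}{\tanh|\xi|} \right) \xi \right|
	\leqslant
	\left| \left( \langle \xi \rangle - |\xi| \right) \xi \right|
	\leqslant
	\frac{1}{2}
\]
holding for every $\xi \in \R$, after which one multiplies by $|\widehat{f}(\xi)|^2$ and integrates.

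For the first inequality I would establish the symbol ordering $|\xi| \leqslant |\xi|/\tanh|\xi| \leqslant \langle \xi \rangle$. The left-hand bound is immediate from $\tanh|\xi| \leqslant 1$. For the right-hand bound, clearing denominators reduces $|\xi|/\tanh|\xi| \leqslant \langle \xi \rangle$ to $\xi^2 \leqslant (1+\xi^2)\tanh^2|\xi|$, which, using $1 - \tanh^2 = \mathrm{sech}^2$, is equivalent to $|\xi| \leqslant \sinh|\xi|$, the elementary inequality $x \leqslant \sinh x$ valid for $x \geqslant 0$. With this ordering one has $0 \leqslant \langle \xi \rangle - |\xi|/\tanh|\xi| \leqslant \langle \xi \rangle - |\xi|$; multiplying through by $|\xi| \geqslant 0$ gives the first pointwise inequality, and Plancherel then yields the first $L^2$-estimate.

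For the second inequality I would rationalize the difference as $\langle \xi \rangle - |\xi| = 1/(\langle \xi \rangle + |\xi|)$, so that the symbol of $(J - |D|)D$ is exactly $\xi/(\langle \xi \rangle + |\xi|)$. Its modulus $|\xi|/(\langle \xi \rangle + |\xi|)$ is a nonnegative function of $|\xi|$ increasing monotonically to the limit $1/2$ as $|\xi| \to \infty$, which it never attains; hence it is bounded by $1/2$. Plancherel then gives $\norm{(J-|D|)Df}_{L^2} \leqslant \tfrac{1}{2}\norm{f}_{L^2}$.

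There is no serious obstacle here. The only two points demanding a little care are recognizing that $K^{-2} \leqslant J$ collapses to $x \leqslant \sinh x$, and performing the rationalization that makes the $1/2$ bound transparent; the remainder is a routine transfer of pointwise symbol bounds through Plancherel's identity, with the overall constant independent of the Fourier normalization since it cancels between the two sides.
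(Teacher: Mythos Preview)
Your proof is correct and follows essentially the same approach as the paper: both reduce via Plancherel to the pointwise symbol chain $0 \leqslant \langle\xi\rangle - |\xi|/\tanh|\xi| \leqslant \langle\xi\rangle - |\xi| \leqslant 1/(2|\xi|)$, with the first inequality coming from $|\xi| \leqslant \sinh|\xi|$ (the paper phrases this equivalently as $e^{2\xi}+e^{-2\xi}\geqslant 2+4\xi^2$) and the second from the rationalization $\langle\xi\rangle - |\xi| = 1/(\langle\xi\rangle + |\xi|)$. The only cosmetic difference is that you bound the combined symbol $|\xi|/(\langle\xi\rangle+|\xi|)\leqslant 1/2$ directly rather than first bounding $\langle\xi\rangle-|\xi|$ and then multiplying by $|\xi|$.
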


\begin{proof}
By the Plancherel identity it is enough to check
the following inequalities
\[
	0 \leqslant
	\langle \xi \rangle - \frac{\xi}{\tanh \xi}
	\leqslant
	\langle \xi \rangle - |\xi|
	\leqslant
	\frac 1{2|\xi|}
	,
\]
where the middle one is trivial.
The rightmost inequality follows from
\[
	\langle \xi \rangle - |\xi|
	=
	\frac{1}{\langle \xi \rangle + |\xi|}
	\leqslant
	\frac 1{2|\xi|}
	.
\]
The leftmost one follows from
the $\tanh$-definition via exponents and the obvious
\[
	e^{2\xi} + e^{-2\xi} \geqslant
	2 + 4\xi^2
	.
\]
\end{proof}

Throughout the text we make an extensive use of the following
bilinear estimates.
Firstly, we state the Kato-Ponce commutator estimate \cite{Kato_Ponce}.

\begin{lemma}[Kato-Ponce commutator estimate]
\label{Kato_Ponce_lemma}
Let $s \geqslant 1$, $p,\ p_2, \ p_3 \in (1,\infty)$
and $p_1, \ p_4 \in (1,\infty]$ be such that 
$\frac1p=\frac1{p_1}+\frac1{p_2}=\frac1{p_3}+\frac1{p_4}$.
Then 
\begin{equation}
\label{Kato_Ponce}
\|[J^s,f]g\|_{L^p} \lesssim \|\partial_xf\|_{L^{p_1}}\|J^{s-1}g\|_{L^{p_2}}+\|J^sf\|_{L^{p_3}}\|g\|_{L^{p_4}}
\end{equation}
for any $f, \, g $ defined on $\mathbb R$.
\end{lemma}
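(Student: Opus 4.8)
The plan is to establish the commutator bound through a Littlewood--Paley/paraproduct decomposition, exploiting the fact that the symbol of $[J^s, f]g = J^s(fg) - f\,J^sg$ gains a derivative in the regime where the frequency of $f$ is dominated by that of $g$. On the Fourier side one has
\[
	\mathcal F\bigl([J^s,f]g\bigr)(\zeta)
	=
	\frac{1}{2\pi}\int_{\mathbb R}
	\bigl( \angles{\zeta}^s - \angles{\eta}^s \bigr)
	\widehat{f}(\zeta-\eta)\,\widehat{g}(\eta)\d\eta
	,
\]
so, writing $\xi = \zeta - \eta$ for the frequency carried by $f$, the relevant bilinear multiplier is $m(\xi,\eta)=\angles{\xi+\eta}^s-\angles{\eta}^s$. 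Introducing a Littlewood--Paley family $\{\Delta_j\}$ I would split the double sum $\sum_{j,k}[J^s,\Delta_jf]\Delta_kg$ into the three standard paraproduct regions: the \emph{low-high} region $j \leqslant k - 2$, the \emph{high-low} region $k \leqslant j - 2$, and the \emph{high-high} region $\abs{j-k}\leqslant 1$.

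In the low-high region, where $f$ is spectrally slower than $g$, the mean value theorem applied to $t\mapsto\angles{\eta+t\xi}^s$ yields
\[
	\bigl\lvert m(\xi,\eta) \bigr\rvert
	=
	\bigl\lvert \angles{\xi+\eta}^s - \angles{\eta}^s \bigr\rvert
	\lesssim
	\abs{\xi}\,\angles{\eta}^{s-1}
	,
\]
valid because $\abs{\xi}\lesssim\angles{\eta}$ on this region. The factor $\abs{\xi}$ is absorbed into a derivative on $f$ and $\angles{\eta}^{s-1}$ into $J^{s-1}$ acting on $g$; after checking that the remaining normalised symbol meets the Coifman--Meyer hypotheses, this piece is controlled by $\norm{\partial_xf}_{L^{p_1}}\norm{J^{s-1}g}_{L^{p_2}}$, which is exactly the first term on the right-hand side. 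Here the assumption $s\geqslant 1$ guarantees that $\angles{\eta}^{s-1}$ is a genuine nonnegative-order Bessel weight and that the dyadic pieces sum.

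In the high-low region the operator $J^s$ effectively falls on $f$: bounding $\angles{\xi+\eta}^s+\angles{\eta}^s\lesssim\angles{\xi}^s$ produces the second term $\norm{J^sf}_{L^{p_3}}\norm{g}_{L^{p_4}}$. The step I expect to be the main obstacle is the high-high region $\abs{j-k}\leqslant 1$, where neither frequency dominates and the crude symbol bound $\abs{m}\lesssim\angles{\eta}^s\sim\angles{\xi}^s$ gains nothing: the output $\Delta_jf\,\Delta_kg$ is only spectrally localised at a scale no larger than the common input scale, so the dyadic blocks must be resummed by means of a square function together with the Fefferman--Stein vector-valued maximal inequality, rather than by a direct triangle inequality. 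This region, and the endpoint exponents $p_1,p_4=\infty$, are where one must be most careful; once they are controlled the three contributions combine, under the Hölder relations $\frac1p=\frac1{p_1}+\frac1{p_2}=\frac1{p_3}+\frac1{p_4}$, to give \eqref{Kato_Ponce}.
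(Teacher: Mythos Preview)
The paper does not actually prove this lemma: it is stated as a known result and attributed to \cite{Kato_Ponce}, with no argument given. Your paraproduct sketch via the Littlewood--Paley decomposition, the mean-value estimate on the symbol $\angles{\xi+\eta}^s-\angles{\eta}^s$ in the low-high region, and Coifman--Meyer/Fefferman--Stein machinery for the remaining pieces is precisely the standard route to this inequality in the literature, so there is nothing to contrast---you have simply supplied the proof the paper chose to outsource.
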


By the commutator $[A, B]$ between operators
$A$ and $B$ we mean the operator
\(
	[A, B] f = ABf - BAf
	.
\)
Secondly, we state the fractional Leibniz rule
proved in the appendix of \cite{Kenig_Ponce_Vega_Communications}.

\begin{lemma}
\label{LeibnizRule_lemma} 
Let $\sigma =\sigma_1+\sigma_2 \in (0,1)$
with $\sigma_i \in (0,\sigma)$
and $p, \  p_1, \ p_2 \in (1,\infty)$ satisfy
$\frac1p=\frac1{p_1}+\frac1{p_2}$.
Then
\begin{equation}
\label{LeibnizRule}
\left \lVert
	|D|^{\sigma}(fg)-f|D|^{\sigma}g-g|D|^{\sigma}f
\right \rVert_{L^p}
\lesssim
\||D|^{\sigma_1}f\|_{L^{p_1}}\||D|^{\sigma_2}g\|_{L^{p_2}}
\end{equation}
for any $f, \, g $ defined on $\mathbb R$.
Moreover, the case $\sigma_2=0$, $p_2=\infty$ is also allowed.
\end{lemma}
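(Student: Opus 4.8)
The plan is to view the left-hand side as a bilinear Fourier multiplier operator and to extract the cancellation hidden in its symbol. Writing $\widehat{f}$ at frequency $\xi$ and $\widehat{g}$ at frequency $\eta$, the expression $|D|^{\sigma}(fg) - f|D|^{\sigma}g - g|D|^{\sigma}f$ has the symbol
\begin{equation*}
	m(\xi, \eta) = |\xi + \eta|^{\sigma} - |\xi|^{\sigma} - |\eta|^{\sigma},
\end{equation*}
so the goal becomes the bound $\norm{T_m(f,g)}_{L^p} \lesssim \norm{|D|^{\sigma_1}f}_{L^{p_1}} \norm{|D|^{\sigma_2}g}_{L^{p_2}}$ for the bilinear operator $T_m$ associated with $m$. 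First I would perform a Littlewood--Paley decomposition $f = \sum_j f_j$, $g = \sum_k g_k$ into dyadic frequency blocks $|\xi| \sim 2^j$, $|\eta| \sim 2^k$, and split the resulting double sum into the three standard regimes: low--high ($j \ll k$), high--low ($j \gg k$) and the resonant high--high regime ($|j - k| \lesssim 1$).

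In the two unbalanced regimes the subtraction of $f|D|^{\sigma}g$ and $g|D|^{\sigma}f$ produces genuine smallness. Say $|\xi| \ll |\eta|$; applying the mean value theorem to $t \mapsto |\xi + t\eta|^{\sigma}$ gives $\bigabs{|\xi + \eta|^{\sigma} - |\eta|^{\sigma}} \lesssim |\xi| \, |\eta|^{\sigma - 1}$, whence $\abs{m} \lesssim |\xi| \, |\eta|^{\sigma - 1} + |\xi|^{\sigma}$. Since $\sigma < 1$ and $|\xi| \lesssim |\eta|$, both terms are dominated by $|\xi|^{\sigma_1} |\eta|^{\sigma_2}$, and the same reasoning applied to the $\xi,\eta$-derivatives shows that $m(\xi,\eta) / \bigl( |\xi|^{\sigma_1} |\eta|^{\sigma_2} \bigr)$ is, on each such block, a Coifman--Meyer type symbol with constants uniform in the frequency gap. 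The corresponding bilinear operator therefore maps $L^{p_1} \times L^{p_2} \to L^p$; after factoring out $|\xi|^{\sigma_1}$ onto $f$ and $|\eta|^{\sigma_2}$ onto $g$ and summing the resulting geometric series in $|j - k|$ (convergent precisely because $\sigma_i \in (0, \sigma)$), one recovers the claimed product bound.

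The delicate contribution is the resonant regime $|\xi| \sim |\eta| \sim 2^j$, where $m$ offers no cancellation and one only has $\abs{m} \lesssim 2^{j\sigma}$. Here I would split the power as $2^{j\sigma} = 2^{j\sigma_1} 2^{j\sigma_2}$, placing one factor on each input, so that the block contribution is controlled by $\bigl( |D|^{\sigma_1} f \bigr)_j \bigl( |D|^{\sigma_2} g \bigr)_j$ in magnitude. The obstruction is that the output frequency may be as small as $O(1)$, so the summands are not almost orthogonal and a crude square-function argument fails; instead one sums the diagonal using the Fefferman--Stein vector-valued maximal inequality followed by Hölder in the exponents $\tfrac1p = \tfrac1{p_1} + \tfrac1{p_2}$. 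This is exactly the step where the hypotheses $p, p_1, p_2 \in (1, \infty)$ are indispensable, and I expect it to be the main obstacle.

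Finally, the endpoint case $\sigma_2 = 0$, $p_2 = \infty$ must be treated separately, since the Coifman--Meyer theory does not accept an $L^{\infty}$ input directly. In the unbalanced regimes this causes no trouble, as $g$ carries no derivative and its blocks are bounded pointwise by its $L^{\infty}$ norm; in the resonant regime I would dominate the undifferentiated factor $g_j$ pointwise by the Hardy--Littlewood maximal function of $g$, reducing matters again to a vector-valued maximal estimate with an $L^{\infty}$ factor pulled out. One may alternatively invoke directly the argument in the appendix of \cite{Kenig_Ponce_Vega_Communications}, where precisely this endpoint is incorporated.
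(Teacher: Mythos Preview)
The paper does not prove this lemma; it merely states the result and cites the appendix of \cite{Kenig_Ponce_Vega_Communications}. So there is no ``paper's own proof'' to compare against beyond that reference.

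Your sketch is a reasonable and essentially correct outline of one standard route, but it is not the route taken in the cited appendix. Kenig--Ponce--Vega work pointwise: for $\sigma \in (0,1)$ they use the singular-integral representation $|D|^{\sigma}f(x) = c_{\sigma}\int (f(x)-f(y))|x-y|^{-1-\sigma}\,dy$, write out $|D|^{\sigma}(fg) - f|D|^{\sigma}g - g|D|^{\sigma}f$ as an integral of the product of differences $(f(x)-f(y))(g(x)-g(y))$ against $|x-y|^{-1-\sigma}$, then split the kernel weight as $|x-y|^{-1-\sigma} = |x-y|^{-1/q_1 - \sigma_1}|x-y|^{-1/q_2 - \sigma_2}$ and apply H\"older. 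This is elementary, avoids Coifman--Meyer and vector-valued maximal inequalities entirely, and yields the endpoint $\sigma_2 = 0$, $p_2 = \infty$ without a separate argument. Your Littlewood--Paley/paraproduct approach is heavier machinery but more flexible (it generalises readily to $\sigma \geqslant 1$ and to other multipliers), whereas the representation-formula proof is restricted to $\sigma \in (0,1)$, which is exactly the range stated here. Both are valid; the original is shorter for the case at hand.
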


We also state an estimate, firstly appeared
in \cite{Klainerman_Selberg} in a weaker form,
and later sharpened in \cite{Selberg_Tesfahun}.

\begin{lemma}
\label{Selberg_Tesfahun_lemma}
	Suppose $a, b, c \in \mathbb R$.
	Then for any $f \in H^a(\mathbb R)$,
	$g \in H^b(\mathbb R)$ and $h \in H^c(\mathbb R)$
	the following inequality holds
	\begin{equation}
	\label{Selberg_Tesfahun}
		\lVert fgh \rVert _{L^1}
		\lesssim
		\lVert f \rVert _{H^a}
		\lVert g \rVert _{H^b}
		\lVert h \rVert _{H^c}
	\end{equation}
	provided that
	\[
		a + b + c > \frac 12
		,
	\]
	\[
		a + b \geqslant 0
		,
		\quad
		a + c \geqslant 0
		,
		\quad
		b + c \geqslant 0
		.
	\]
\end{lemma}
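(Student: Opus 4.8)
The plan is to prove \eqref{Selberg_Tesfahun} first by a direct H\"older--Sobolev argument in the range where all three orders are nonnegative, and then to indicate how the general conditions, which admit one negative order, follow from a frequency-space analysis. By density it suffices to take $f,g,h \in \mathcal S(\mathbb R)$.

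Assume first $a,b,c \geqslant 0$. I would choose exponents $p_1,p_2,p_3 \in [2,\infty]$ with $\tfrac1{p_1}+\tfrac1{p_2}+\tfrac1{p_3}=1$ and apply H\"older's inequality, $\lVert fgh \rVert_{L^1} \leqslant \lVert f \rVert_{L^{p_1}} \lVert g \rVert_{L^{p_2}} \lVert h \rVert_{L^{p_3}}$, followed by the Sobolev embeddings $H^a \hookrightarrow L^{p_1}$, $H^b \hookrightarrow L^{p_2}$, $H^c \hookrightarrow L^{p_3}$. On the line the embedding $H^{\sigma} \hookrightarrow L^p$ holds as soon as $\tfrac1p \in [\max(0,\tfrac12-\sigma),\tfrac12]$, with $p=\infty$ admissible only for $\sigma > \tfrac12$, so admissible exponents exist precisely when the lower endpoints $\max(0,\tfrac12-a)$, $\max(0,\tfrac12-b)$, $\max(0,\tfrac12-c)$ can be raised to sum to $1$. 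Since these endpoints are bounded above by $\tfrac32-(a+b+c)$, the strict hypothesis $a+b+c > \tfrac12$ is exactly what guarantees an admissible choice, and it leaves room to avoid the endpoint $L^{\infty}$ when some order equals $\tfrac12$. This yields \eqref{Selberg_Tesfahun} in the nonnegative range.

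For the general statement I would pass to the Fourier side: by Plancherel the associated trilinear form is a convolution integral $\iint \widehat f(\xi) \widehat g(\eta) \widehat h(-\xi-\eta) \d\xi \d\eta$. Writing $\widehat f = \langle \cdot \rangle^{-a} F$, $\widehat g = \langle \cdot \rangle^{-b} G$, $\widehat h = \langle \cdot \rangle^{-c} H$ with $\lVert F \rVert_{L^2} \sim \lVert f \rVert_{H^a}$ and so on, and applying the Cauchy--Schwarz inequality, the estimate reduces to a uniform (in $\xi$) bound on the weighted convolution $\langle \xi \rangle^{-2a} \int \langle \eta \rangle^{-2b} \langle \xi-\eta \rangle^{-2c} \d\eta$. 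The validity of this convolution bound is governed exactly by the three pairwise sums being nonnegative, which controls the regions where one frequency dominates the others, together with the total condition $a+b+c > \tfrac12$, which controls the region where all three frequencies are comparable; this is the content borrowed from \cite{Klainerman_Selberg, Selberg_Tesfahun}.

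The main obstacle is precisely this weighted convolution estimate and the attendant case analysis by the relative sizes of $\xi$, $\eta$ and $\xi-\eta$. In the diagonal region $|\xi| \sim |\eta| \sim |\xi-\eta|$ one loses a half power, and here the strict inequality $a+b+c > \tfrac12$ is needed to keep the integral convergent and to avoid a logarithmic divergence at the endpoint; in each region where two of the three frequencies dominate, convergence is recovered only through the nonnegativity of the corresponding pairwise sum. The nonnegative-order H\"older argument, by contrast, is routine, so the difficulty is entirely concentrated in making the low-regularity regime summable, which is why one appeals to the sharpened inequality of \cite{Selberg_Tesfahun}.
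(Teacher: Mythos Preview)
The paper does not prove this lemma at all: it is stated as a quoted result, with the sentence preceding it attributing the inequality to \cite{Klainerman_Selberg} (in a weaker form) and \cite{Selberg_Tesfahun} (in the sharp form), and no proof environment follows. So there is no ``paper's own proof'' to compare against; both you and the author ultimately defer to the same references.

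That said, your sketch has a genuine gap in the second step. The reduction you describe via Cauchy--Schwarz, namely a uniform-in-$\xi$ bound on
\(
	\langle \xi \rangle^{-2a}
	\int \langle \eta \rangle^{-2b}
	\langle \xi-\eta \rangle^{-2c} \d\eta
	,
\)
is too crude to recover the stated hypotheses: for the $\eta$-integral even to converge one needs $2b+2c>1$, i.e.\ $b+c>\tfrac12$, whereas the lemma allows $b+c\geqslant 0$. A single application of Cauchy--Schwarz on the convolution integral cannot reach the sharp range; one needs either a dyadic (Littlewood--Paley) decomposition with a case split on which of the three frequencies is dominant, or the equivalent duality reformulation $\lVert gh\rVert_{H^{-a}}\lesssim \lVert g\rVert_{H^b}\lVert h\rVert_{H^c}$ treated as a bilinear product estimate. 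You correctly identify that the difficulty lives in the low-regularity regime and cite the right sources, but the specific mechanism you wrote down would not close under the hypotheses as stated.
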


Proving a global-in-time a priori estimate we will use
the following limiting case of the Sobolev embedding theorem,
that in the one dimensional case $d = 1$
reads as follows.

\begin{lemma}
[Brezis-Gallouet inequality]
\label{Brezis_lemma}
	Suppose $f \in H^s \left( \mathbb R \right)$ with $s > 1 / 2$.
	Then
	\begin{equation}
	\label{Brezis_inequality}
		\lVert f \rVert_{L^{\infty}}
		\leqslant
		C_{s}
		\left(
			1 + \lVert f \rVert_{H^{1/2}}
			\sqrt{ \log( 1 + \lVert f \rVert_{H^s} ) }
		\right)
		.
	\end{equation}
\end{lemma}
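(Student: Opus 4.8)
The plan is to prove the estimate by the standard Fourier frequency-splitting argument: control $\norm{f}_{L^\infty}$ by $\norm{\widehat f}_{L^1}$ via the inversion formula, and then split the frequency integral at a threshold $N \geqslant 1$ that will eventually be tuned to $\norm{f}_{H^s}$. First I would note that, by Fourier inversion, $\norm{f}_{L^\infty} \lesssim \int_{\R} |\widehat f(\xi)| \d\xi$, and decompose this integral into the low-frequency region $|\xi| \leqslant N$ and the high-frequency region $|\xi| > N$.

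For the low-frequency piece I would insert the weight $\angles{\xi}^{1/2}$ and apply the Cauchy--Schwarz inequality, which gives
\[
	\int_{|\xi| \leqslant N} |\widehat f(\xi)| \d\xi
	\leqslant
	\left( \int_{|\xi| \leqslant N} \angles{\xi}^{-1} \d\xi \right)^{1/2}
	\norm{f}_{H^{1/2}}
	\lesssim
	\sqrt{\log(1 + N)} \, \norm{f}_{H^{1/2}}
	,
\]
the logarithmic growth of the weight integral being the source of the logarithm in the statement. For the high-frequency piece I would instead insert the weight $\angles{\xi}^{s}$; here the hypothesis $s > 1/2$ is exactly what guarantees that $\int_{|\xi| > N} \angles{\xi}^{-2s} \d\xi \lesssim_s N^{-(2s - 1)}$ converges, so that Cauchy--Schwarz yields a bound $\lesssim_s N^{-(s - 1/2)} \norm{f}_{H^s}$.

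The decisive step is the optimisation of $N$. I would choose $N = 1 + \norm{f}_{H^s}^{1/(s - 1/2)}$, which forces the high-frequency contribution to be $O(1)$, while the low-frequency bound becomes $\sqrt{\log(1 + N)} \, \norm{f}_{H^{1/2}}$ with $\log(1 + N) \lesssim_s 1 + \log(1 + \norm{f}_{H^s})$ by the very choice of $N$. Together these give
\[
	\norm{f}_{L^\infty}
	\lesssim_s
	1 + \left( 1 + \sqrt{\log(1 + \norm{f}_{H^s})} \right) \norm{f}_{H^{1/2}}
	.
\]

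The only remaining obstacle, which is bookkeeping rather than a genuine difficulty, is to absorb the spurious additive term $\norm{f}_{H^{1/2}}$ into the asserted right-hand side. I would dispose of it by a dichotomy on the size of $\norm{f}_{H^s}$: when $\norm{f}_{H^s}$ is bounded by a fixed $s$-dependent constant, the ordinary Sobolev embedding $H^s \hookrightarrow L^\infty$ already yields $\norm{f}_{L^\infty} \lesssim_s 1$, absorbed by the leading $1$; when $\norm{f}_{H^s}$ is large one has $\sqrt{\log(1 + \norm{f}_{H^s})} \gtrsim 1$, so the stray $\norm{f}_{H^{1/2}}$ is dominated by $\norm{f}_{H^{1/2}} \sqrt{\log(1 + \norm{f}_{H^s})}$. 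Collecting the two regimes produces the constant $C_s$ and the stated inequality \eqref{Brezis_inequality}.
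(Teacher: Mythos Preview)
Your proposal is correct and follows essentially the same approach as the paper: Fourier inversion, frequency splitting at a threshold, Cauchy--Schwarz with the weights $\angles{\xi}^{1/2}$ and $\angles{\xi}^{s}$ on the low and high pieces respectively, and then choosing the threshold proportional to $\norm{f}_{H^s}^{1/(s-1/2)}$. The paper organises the optimisation as a three-case analysis on the size of $\norm{f}_{H^s}$ and on whether $s\geqslant 3/2$, whereas you use a single formula for $N$ together with a cleaner large/small dichotomy, but the substance is identical.
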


%Formulation for any d:
%\begin{lemma}
%[Brezis-Gallouet inequality]
%\label{Brezis_lemma}
%	Suppose $f \in H^s \left( \mathbb R^d \right)$ with $s > d / 2$.
%	Then
%	\begin{equation}
%	\label{Brezis_inequality}
%		\lVert f \rVert_{L^{\infty}}
%		\leqslant
%		C_{s, d}
%		\left(
%			1 + \lVert f \rVert_{H^{d/2}}
%			\sqrt{ \log( 1 + \lVert f \rVert_{H^s} ) }
%		\right)
%		.
%	\end{equation}
%\end{lemma}

Inequality \eqref{Brezis_inequality} was firstly put forward
and proved in $H^2(\mathbb R^2)$
in the work by Brezis, Gallouet
\cite{Brezis_Gallouet}.
It was extended to more general Sobolev spaces
and any dimension in
\cite{Brezis_Wainger}, but in a slightly different form.
For the sake of completeness, we provide here with
the proof based on the idea introduced in \cite{Brezis_Gallouet}.
%Below we will use the inequality only in the one dimensional case,
%and so let us restrict to $d = 1$ for simplicity.

\begin{proof}

Let $f \in H^s(\mathbb R)$ with $s > 1 / 2$.
Then
\[
	\norm{f}_{L^{\infty}}
	\leqslant
	\frac 1{2\pi} \norm{ \widehat{f} }_{L^1}
	=
	\frac 1{2\pi} \int _{|\xi| \leqslant R}
	\left| \widehat{f} (\xi) \right| d\xi
	+
	\frac 1{2\pi} \int _{|\xi| > R}
	\left| \widehat{f} (\xi) \right| d\xi
	= I_1(R) + I_2(R)
	,
\]
where $R > 0$ is an arbitrary positive number.
In the first integral $I_1$ we multiply and divide
$\widehat{f}$ by $\left (1 + \xi^2 \right) ^{1/4}$.
Afterwards, we apply the H{\"o}lder inequality
to get
\[
	I_1(R)
	\leqslant
	\frac 1{2\pi}
	\left(
		\int_{-R}^R 
		\left| \widehat{f} (\xi) \right|^2
		\left( 1 + \xi^2 \right)^{1/2} d\xi
	\right) ^{1/2}
	\left(
		\int_{-R}^R \frac{d\xi}{\sqrt{1 + \xi^2}}
	\right) ^{1/2}
	\leqslant
	\frac 1{\pi} \norm{ f }_{H^{1/2}}
	\sqrt{ \log (1 + R) }
	,
\]
and similarly,
\[
	I_2(R)
	\leqslant
	\frac 1{2\pi} \norm{ f }_{H^s}
	\left(
		\int_{|\xi| > R} \frac{d\xi}{ \left( 1 + \xi^2 \right)^s}
	\right) ^{1/2}
	.
\]
Now it is left to choose $R$ depending on $f, s$.
If
\(
	\norm{ f }_{H^s} \leqslant 1
\)
then taking
\(
	R = \norm{ f }_{H^s}
\)
we immediately obtain the desired inequality \eqref{Brezis_inequality}.
In the case
\(
	\norm{ f }_{H^s} > 1
	,
\)
we estimate the second integral as follows
\[
	I_2(R)
	\leqslant
	\frac { \norm{ f }_{H^s} }
	{ 2\pi \sqrt{s - 1/2} R^{s - 1/2} }
	,
\]
and so if
\(
	s \geqslant 3/2
\)
one takes 
\(
	R = \norm{ f }_{H^s}
\)
again to bound %$I_2(R)$ by the constant
\(
	I_2(R)
	\leqslant
	\left( 2\pi \sqrt{s - 1/2} \right)^{-1}
\)
and to come to \eqref{Brezis_inequality}.
In the last case
\(
	\norm{ f }_{H^s} > 1
\)
and
\(
	\alpha_s = 1 / (s - 1/2) > 1
	,
\)
we can take
\(
	R = \norm{ f }_{H^s}^{\alpha_s}
\)
to bound $I_2(R)$ by the same constant.
Note that $I_1(R)$ in this case is bounded as
\[
	I_1(R)
	\leqslant
	\frac 1{\pi} \norm{ f }_{H^{1/2}}
	\sqrt{ \log \left( 1 + \norm{ f }_{H^s}^{\alpha_s} \right) }
	\leqslant
	\frac 1{\pi} \norm{ f }_{H^{1/2}}
	\sqrt{ \alpha_s \log \left( 1 + \norm{ f }_{H^s}\right) }
	,
\]
and so we again obtain Inequality \eqref{Brezis_inequality}.
 
\end{proof}

\section{Modified energy}
\label{Modified_energy}
%%%%%%%%%%%%%%%%%%%%%%%%%%%%%%%%%%%%%%%%%%%%%%%%%%%%%%%%%%%%%%%%%%%%%%%%%%%%%
\setcounter{equation}{0}
%%%%%%%%%%%%%%%%%%%%%%%%%%%%%%%%%%%%%%%%%%%%%%%%%%%%%%%%%%%%%%%%%%%%%%%%%%%%%

As we shall see in the proof of the next lemma,
a direct use of 
\(
	H^{s + 1/2} \times H^s
\)-norm as the energy
does not allow us to close the estimates,
and so we modify it as follows.
Firstly, for each $\varkappa > 0$ and $s \geqslant 1/2$
we introduce the norm
\begin{equation}
\label{capillarity_norm}
	\norm{ \eta, v } _{ H_{\varkappa}^{s + 1/2} \times H^s }^2
	= \varkappa \lVert \partial_x \eta \rVert _{H^{s - 1/2}}^2
	+ \lVert \eta \rVert _{H^{s - 1/2}}^2
	+ \lVert K^{-1} v \rVert _{H^{s - 1/2}}^2
	,
\end{equation}
which is obviously equivalent to the standard norm in
\(
	H^{s +1/2}(\mathbb{R}) \times H^s(\mathbb{R})
	.
\)
Such choice will be convenient later for
analysis of dependence of solution on the
capillarity $\varkappa$.
Secondly, we define the modified energy
\begin{equation}
\label{energy}
	E^s(\eta, v)
	= \frac 12
	\norm{ \eta, v } _{ H_{\varkappa}^{s + 1/2} \times H^s }^2
	+ \frac 12 \int \eta \left( J^{s - 1/2 } v \right) ^2
	,
\end{equation}
where the pair $(\eta, v)$ represents a possible solution of
System \eqref{capillarity_sys}.
Note that in the limit case $s = 1/2$ this quantity
coincides with the Hamiltonian given in \eqref{Hamiltonian},
\(
	E^{1/2}(\eta, v)
	= \mathcal H(\eta, v)
	.
\)
This gives us a small hint for the choice of the right cubic modifier
that is basically a guess.

\begin{lemma}
\label{energy_lemma}
	Suppose $s \geqslant 1/2$.
	Then there exists $C_s > 0$ such that for any $\varkappa > 0$ and
%	\(
%		( \eta, v ) \in C^1 \left( (0, T);
%		( H^{\infty}(\mathbb{R}) )^2 \right)
%	\)
%	solves System \eqref{capillarity_sys}.
	any functions
	\(
		\eta, v \in C^1 \left( (0, T);
		H^{\infty}(\mathbb{R}) \right)
	\)
	solving System \eqref{capillarity_sys}
	it holds that
	\[
		\frac{d}{dt}E^s (\eta, v)
		\leqslant
		C_s (1 + \varkappa)
		\left(
			\norm{ \eta, v } _{ H_{\varkappa}^{s + 1/2} \times H^s }^2
			+
			\norm{ \eta, v } _{ H_{\varkappa}^{s + 1/2} \times H^s }^4
		\right).
	\]
\end{lemma}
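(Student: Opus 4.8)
The plan is to differentiate $E^s$ along a solution of \eqref{capillarity_sys}, cancel the single genuinely uncontrollable top-order contribution by means of the cubic modifier, and bound everything that survives by the trilinear estimate of Lemma~\ref{Selberg_Tesfahun_lemma} together with the commutator and Leibniz estimates of Lemmas~\ref{Kato_Ponce_lemma} and~\ref{LeibnizRule_lemma}. Writing the quadratic part of the energy \eqref{energy} as
\[
	\tfrac12 \norm{\eta, v}_{H_{\varkappa}^{s+1/2}\times H^s}^2
	= \tfrac12 \int \left( J^{2s-1}(1+\varkappa D^2)\eta \right)\eta \, dx
	+ \tfrac12 \int \left( J^{2s-1}K^{-2}v \right) v \, dx ,
\]
differentiating in $t$ and inserting \eqref{capillarity_sys} splits $\frac{d}{dt}E^s$ into linear, cubic and quartic contributions. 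Since $\partial_x$ and $-i\tanh D$ are skew-adjoint while $J^{2s-1}$, $1+\varkappa D^2$ and $K^{-2}$ are self-adjoint, and since the symbols obey the identity $\tanh D \, K^{-2} = D$, a short computation shows that the two linear terms
\[
	- \int \left( J^{2s-1}(1+\varkappa D^2)\eta \right)\partial_x v \, dx
	- i \int \left( J^{2s-1}K^{-2}v \right)\tanh D (1+\varkappa D^2)\eta \, dx
\]
cancel exactly; this is precisely what the scaling $H^{s+1/2}\times H^s$ is designed to achieve.

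The remaining cubic terms are $-i\int ( J^{2s-1}(1+\varkappa D^2)\eta)\tanh D(\eta v)\, dx$ and $-i\int ( J^{2s-1}K^{-2}v)\tanh D(v^2/2)\, dx$ from the quadratic energy, together with $\frac12\int \partial_t\eta\, (J^{s-1/2}v)^2\, dx$ and $\int \eta\,(J^{s-1/2}v)\, J^{s-1/2}\partial_t v\, dx$ from the modifier. Distributing the weight $J^{2s-1}(1+\varkappa D^2)$ by the fractional Leibniz rule and integrating by parts, all pieces of the first term are controlled by the energy norm except the one in which the top $s+1/2$ derivatives fall on $v$; schematically this dangerous piece has the form $\varkappa\int ( J^{s-1/2}\partial_x\eta)\,\eta\, J^{s+1/2}v \, dx$ (the term highlighted after \eqref{capillarity_data}), and it cannot be bounded by $\norm{\eta, v}_{H_{\varkappa}^{s+1/2}\times H^s}$ since the energy controls $v$ only in $H^s$. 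The crucial observation is that inserting the linear part $-i\tanh D(1+\varkappa D^2)\eta$ of $\partial_t v$ into the modifier term $\int \eta\,(J^{s-1/2}v)\, J^{s-1/2}\partial_t v\, dx$ and integrating by parts reproduces exactly this top-order expression with the opposite sign, so that the two cancel up to commutators and lower-order remainders. That $E^{1/2}=\mathcal H$ coincides with the conserved Hamiltonian \eqref{Hamiltonian} is what suggests this particular cubic modifier.

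After the cancellation every surviving term is the spatial integral of a product of three factors, each carrying at most the number of derivatives the energy controls, and I would estimate them as follows. The commutators produced above, such as $[J^{2s-1},\eta]\,v$ and $[\tanh D,\eta]\,v$, are handled by the Kato--Ponce estimate \eqref{Kato_Ponce} and the Leibniz rule \eqref{LeibnizRule}, which trade one derivative for a lower-order remainder; here one uses that $\tanh D$ is bounded on every $H^\sigma$ and that, by Lemma~\ref{Bessel_Riesz_comparison_lemma}, the operator $K^{-2}$ can be replaced by $|D|$ and $J$ at the cost of an $L^2$-bounded error. The resulting triple products are then bounded in $L^1$ by Lemma~\ref{Selberg_Tesfahun_lemma}, whose hypotheses $a+b+c>1/2$ and pairwise nonnegativity are exactly met for $s\geqslant 1/2$ because the modifier has removed the half-derivative loss; this is what lets the argument proceed without an $L^\infty$ bound on $\eta$, which is unavailable at this low regularity. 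Tracking the surface-tension weight through these steps produces the factor $1+\varkappa$, the cubic terms are absorbed into $\norm{\eta,v}^2 + \norm{\eta,v}^4$ by Young's inequality, and the quartic terms (arising when the nonlinear parts of $\partial_t\eta,\partial_t v$ are inserted in the modifier) give the quartic contribution directly. The main obstacle is the cancellation step: verifying that the uncontrollable top-order term is removed \emph{exactly}, so that the leftover consists only of commutators and lower-order products that fit the regularity budget of Lemma~\ref{Selberg_Tesfahun_lemma}; the bookkeeping of the $\tanh D$ factors and of the $K^{-2}$-versus-$|D|$ discrepancy is where the care is required.
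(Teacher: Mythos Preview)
Your plan matches the paper's proof: the linear terms cancel by the choice of norm, the dangerous top-order piece (the paper's $I_1$) is cancelled against the modifier contribution coming from the linear part of $\partial_t v$ (the paper's $I_5$) up to Kato--Ponce commutators, and the remaining cubic and quartic pieces are handled by Lemma~\ref{Selberg_Tesfahun_lemma}, the fractional Leibniz rule, and H\"older/Sobolev. One caution on the bookkeeping: after symmetrizing by integration by parts the commutator you actually feed into Kato--Ponce is $[J^{s+1/2},\eta]v$ (paired against $J^{s-3/2}D^2\tanh D\,\eta$) together with $[J,\eta]J^{s-1/2}v$, not $[J^{2s-1},\eta]v$, since the latter would demand more derivatives on $\eta$ than the energy controls for large $s$; also, Lemma~\ref{Bessel_Riesz_comparison_lemma} is not needed here because $K^{-2}\tanh D = D$ holds exactly.
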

\begin{proof}
We have already noticed that $E^{1/2}(\eta, v)$
is a conserved quantity, which proves the
statement for the limit case $s = 1/2$.

Assuming $s > 1/2$ we
calculate the derivatives
%
%\begin{multline*}
\[
	\frac {\varkappa}2 \frac{d}{dt}
	\lVert \partial_x \eta \rVert _{H^{s-1/2}}^2
	=
	-\varkappa \int \left( J^{s - 1/2} \partial_x \eta \right)
	J^{s - 1/2} \partial_x^2 v
	- i\varkappa \int \left( J^{s - 1/2} \partial_x \eta \right)
	J^{s - 1/2} \partial_x \tanh D(\eta v)
	,
\]
%	\\
%	=
%	i\varkappa \int \left( J^{s + 1}D \eta \right)
%	J^s v
%	+ i\varkappa \int \left( J^{s - \frac 12} D \eta \right)
%	J^{s + \frac 12} (\eta v)
%	+ I_1
%\end{multline*}
%
\[
	\frac 12 \frac{d}{dt}
	\lVert \eta \rVert _{H^{s-1/2}}^2
	=
	- \int \left( J^{s - 1/2} \eta \right)
	J^{s - 1/2} \partial_x v
	- i \int \left( J^{s - 1/2} \eta \right)
	J^{s - 1/2} \tanh D(\eta v)
\]
and the derivative of velocity norm
\begin{multline*}
	\frac 12 \frac{d}{dt} \norm{ K^{-1} v } _{H^{s - 1/2}}^2
	=
	- i \int ( J^{s - 1/2} K^{-1} v ) J^{s - 1/2} K^{-1}
	\tanh D \left( 1 + \varkappa D^2 \right) \eta
	\\
	- \frac i2 \int ( J^{s - 1/2} K^{-1} v )
	J^{s - 1/2} K^{-1} \tanh D v^2
	.
\end{multline*}
Summing up these derivatives and
simplifying the corresponding expression via
integration by parts,
we obtain
\[
	\frac 12 \frac{d}{dt}
	\norm{ \eta, v } _{ H_{\varkappa}^{s + 1/2} \times H^s }^2
	=
	I_1 + I_2 + I_3
	,
\]
where
\[
	I_1 =
	i\varkappa \int \left( J^{s - 1/2} D^2 \tanh D \eta \right)
	J^{s - 1/2} (\eta v)
	,
\]
\[
	I_2 =
	i \int \left( J^{s - 1/2} \tanh D \eta \right)
	J^{s - 1/2} (\eta v)
	,
\]
\[
	I_3 =
	\frac i2 \int \left( J^{s - 1/2} |D|^{1/2} \sgn D v \right)
	J^{s - 1/2} |D|^{1/2} (v^2)
	.
\]
The second integral $I_2$ can be estimated with the help of
Lemma \ref{Selberg_Tesfahun_lemma}, by
setting
\(
	f = J^{2s - 1} \tanh D \eta
	,
\)
$g = \eta$, $h = v$ and $a = 1/2 - s$, $b = s - 1/2$,
$c = s$.
Thus one obtains
\[
	I_2 \lesssim
	\lVert \eta \rVert _{H^{s - 1/2}}^2
	\lVert v \rVert _{H^s}
	.
\]
Applying H\"older's inequality to the third integral $I_3$,
we get
\[
	I_3
	\lesssim
	\norm { v } _{H^s}
	\norm { v^2 } _{H^s}
	\lesssim
	\norm { v } _{H^s}^2
	\norm { v } _{L^{\infty}}
	.
\]
We would like to point out here that
the first integral $I_1$ cannot be
estimated via the energy norm \eqref{capillarity_norm},
using only integration by parts or commutator estimates.
Turning our attention to the modifier of energy $E^s$,
we calculate its time derivative as follows
\begin{multline}
\label{dE3}
	\frac 12 \frac {d}{dt}
	\int \eta \left( J^{s - \frac 12 } v \right) ^2
	=
	- i \int \eta \left( J^{s - \frac 12 } v \right)
	J^{s - \frac 12 } \tanh D \eta
	- i \varkappa \int \eta \left( J^{s - \frac 12 } v \right)
	J^{s - \frac 12 } D^2 \tanh D \eta
	\\
	- \frac i2 \int \eta \left( J^{s - \frac 12 } v \right)
	J^{s - \frac 12 } \tanh D v^2
	- \frac 12
	\int \partial_x v \left( J^{s - \frac 12 } v \right) ^2
	- \frac i2
	\int \tanh D (\eta v) \left( J^{s - \frac 12 } v \right) ^2
	.
\end{multline}
Let $I_4, \ldots, I_8$ represent these integrals, respectively.
The first summand, that we notate by $I_4$,
is estimated easily as
\[
	I_4 =
	- i \int \eta \left( J^{s - \frac 12 } v \right)
	J^{s - \frac 12 } \tanh D \eta
	\lesssim
	\lVert \eta \rVert _{H^{s - 1/2}}^2
	\lVert v \rVert _{H^s}
\]
by applying Inequality \eqref{Selberg_Tesfahun}.
The third integral in \eqref{dE3},
notated by $I_6$,
is estimated in a similar way
\[
	I_6
	\lesssim
	\lVert \eta \rVert _{L^2}
	\norm{ v } _{H^s}
	\norm{ v^2 } _{H^s}
	\lesssim
	\lVert \eta \rVert _{L^2}
	\norm{ v } _{L^{\infty}}
	\norm{ v } _{H^s}^2
	\lesssim
	\lVert \eta \rVert _{H^{s - 1/2}}
	\lVert K^{-1} v \rVert _{H^{s - 1/2}}^3
	.
\]
%\textcolor{red}{	
%where the last bound follows from
%the fact that $H^s$ is an algebra.
%}	
The fourth integral in \eqref{dE3}
equals
\begin{multline*}
	I_7
	=
	- \frac i2
	\int \left( \sgn D |D|^{\frac 12} v \right)
	|D|^{\frac 12} \left( J^{s - \frac 12 } v \right) ^2
	=
	- i \int \left( \sgn D |D|^{\frac 12} v \right)
	\left( J^{s - \frac 12 } v \right)
	J^{s - \frac 12 } |D|^{\frac 12} v
	\\
	- \frac i2
	\int \left( \sgn D |D|^{\frac 12} v \right)
	\left[
		|D|^{\frac 12} \left( J^{s - \frac 12 } v \right) ^2
		- 2 \left( J^{s - \frac 12 } v \right)
		J^{s - \frac 12 } |D|^{\frac 12} v
	\right]
\end{multline*}
where the first integral can be treated
with interpolation in Sobolev spaces and
the second integral by the fractional Leibniz rule
as follows
\[
	I_7 \lesssim
	\lVert \sgn D |D|^{\frac 12} v \rVert _{H^{s - 1/2}}
	\lVert J^{s - \frac 12 } v \rVert _{H^{1/2}}
	\lVert J^{s - \frac 12 } |D|^{\frac 12} v \rVert _{L^2}
	+
	\lVert \sgn D |D|^{\frac 12} v \rVert _{L^2}
	\lVert J^{s - \frac 12 } |D|^{\frac 12} v \rVert _{L^2}^2
	\lesssim
	\lVert v \rVert _{H^s}^3.
\]
The last integral in \eqref{dE3},
that we notate by $I_8$, is bounded by
\[
	I_8 \leqslant
	\frac 12
	\lVert \eta \rVert _{L^2}
	\lVert v \rVert _{L^{\infty}}
	\lVert J^{s - 1/2 } v \rVert _{L^4}^2
	\lesssim
	\lVert \eta \rVert _{H^{s - 1/2 }}
	\lVert K^{-1} v \rVert _{H^{s - 1/2}}^3.
\]
It is left to regard the second integral in \eqref{dE3},
denoted by $I_5$,
and the integral $I_1$ appeared after the differentiation
of the energy norm \eqref{capillarity_norm}.
Firstly, let us note that
\[
	J^{s + 1/2} (\eta v)
	=
	\left[ J^{s + 1/2}, \eta \right]v
	+ \eta J^{s + 1/2} v
	,
\]
\[
	J \left( \eta J^{s - 1/2} v \right)
	=
	\left[ J, \eta \right] J^{s - 1/2} v
	+ \eta J^{s + 1/2} v
	,
\]
and so summing $I_1$, $I_5$ together one can easily obtain
\[
	I_1 + I_5 =
	i\varkappa \int \left( J^{s - 3/2} D^2 \tanh D \eta \right)
	\left(
		\left[ J^{s + 1/2}, \eta \right]v
		-
		\left[ J, \eta \right] J^{s - 1/2} v
	\right)
	.
\]
Applying the Kato-Ponce estimate
to the first commutator one obtains
\[
	\norm{ \left[ J^{s + 1/2}, \eta \right]v }_{L^2}
	\lesssim
	\lVert \partial_x \eta \rVert _{L^{p_1}}
	\lVert J^{s - 1/2} v \rVert _{L^{p_2}}
	+
	\lVert J^{s + 1/2} \eta \rVert _{L^2}
	\lVert v \rVert _{L^{\infty}}
	.
\]
Taking $p_1(s) = \frac{1}{1-s}$, $p_2(s) = \frac{2}{2s-1}$
for $s \in (\frac 12, 1)$
and $p_1 = p_2 = 4$ in case $s \geqslant 1$
one deduces
\[
	\norm{ \left[ J^{s + 1/2}, \eta \right]v }_{L^2}
	\lesssim
	\norm{ \eta } _{H^{s + 1/2}}
	\left\{
	\begin{aligned}
%		\left(
			\norm{ v } _{H^{1/2}}
			+ \norm{ v } _{L^{\infty}}
%		\right)
		&
		\ \mbox{ for }
		s \in (1/2, 1)
		\\
		\norm{ v } _{H^{s - 1/4}}
		&
		\ \mbox{ for }
		s \geqslant 1
	\end{aligned}
	\right.
%	\lesssim
%	\lVert \eta \rVert _{H^{s + 1/2}}
%	\lVert v \rVert _{H^s}
\]
after implementing the Sobolev embedding.
Similarly,
\begin{equation*}
	\norm{ \left[ J, \eta \right] J^{s - 1/2} v }_{L^2}
	\lesssim
	\lVert \partial_x \eta \rVert _{L^{p_1}}
	\lVert J^{s - 1/2} v \rVert _{L^{p_2}}
	+
	\lVert J \eta \rVert _{L^{p_3}}
	\lVert J^{s - 1/2} v \rVert _{L^{p_4}}
\end{equation*}
follows from the Kato-Ponce inequality.
Now taking $p_1 = p_3 = \frac{1}{1-s}$,
$p_2 = p_4 = \frac{2}{2s-1}$
for $s \in (\frac 12, 1)$
and $p_1 = p_2 = p_3 = p_4 = 4$ for $s \geqslant 1$
one deduces
\[
	\norm{ \left[ J, \eta \right] J^{s - 1/2} v }_{L^2}
	\lesssim
	\norm{ \eta } _{H^{s + 1/2}}
	\left\{
	\begin{aligned}
		\norm{ v } _{H^{1/2}}
		&
		\ \mbox{ for }
		s \in (1/2, 1)
		\\
		\norm{ v } _{H^{s - 1/4}}
		&
		\ \mbox{ for }
		s \geqslant 1
	\end{aligned}
	\right.
%	\lesssim
%	\lVert \eta \rVert _{H^{s + 1/2}}
%	\lVert v \rVert _{H^s}
\]
after implementing the Sobolev embedding.
Thus applying H\"older's inequality
to the sum $I_1 + I_5$ one obtains
\[
	I_1 + I_5 \lesssim
	\varkappa
	\lVert \partial_x \eta \rVert _{H^{s - 1/2}}
	\norm{ \eta } _{H^{s + 1/2}}
	\left\{
	\begin{aligned}
		\norm{ v } _{H^{1/2}}
		+ \norm{ v } _{L^{\infty}}
		&
		\ \mbox{ for }
		s \in (1/2, 1)
		\\
		\norm{ v } _{H^{s - 1/4}}
		&
		\ \mbox{ for }
		s \geqslant 1
	\end{aligned}
	\right.
	,
\]
and so
\begin{equation*}
	I_1 + I_5 \lesssim
	\varkappa
	\norm{ \partial_x \eta } _{H^{s - 1/2}}
	\left(
		\norm{ \partial_x \eta } _{H^{s - 1/2}}
		+
		\norm{ \eta } _{H^{s - 1/2}}
	\right)
	\lVert v \rVert _{H^s}
	\lesssim
	\norm{ \eta, v } _{ H_{\varkappa}^{s + 1/2} \times H^s }^2
	+ \varkappa
	\norm{ \eta, v } _{ H_{\varkappa}^{s + 1/2} \times H^s }^4
	.
\end{equation*}
Finally, summing Derivative \eqref{dE3}
with the derivative of square of
$H_{\varkappa}^{s + 1/2} \times H^s$-norm
according to Definition \eqref{energy}
we obtain
\[
	\frac{d}{dt}E^s (\eta, v)
	=
	I_1 + \ldots + I_8
	\lesssim
	\norm{ \eta, v } _{ H_{\varkappa}^{s + 1/2} \times H^s }^2
	+ \varkappa
	\norm{ \eta, v } _{ H_{\varkappa}^{s + 1/2} \times H^s }^4
	,
\]
which concludes the proof.
\end{proof}

In the following obvious statement the non-cavitation condition
plays a crucial role.

\begin{lemma}[Coercivity]
	Let $s \geqslant 1/2$ and
	\(
		(\eta,v) \in C\left([0,T];
		H^{s + 1/2}(\mathbb R)
		\times 
		H^s(\mathbb R)\right)
	\)
	be a solution of System \eqref{capillarity_sys} for some $T>0$.
	If in addition $\eta$ satisfies
	the non-cavitation condition then
	\[
		E^s (\eta, v)
		\sim
		\norm{ \eta, v } _{ H_{\varkappa}^{s + 1/2} \times H^s }^2
		.
	\]
\end{lemma}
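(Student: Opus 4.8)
The plan is to treat the two inequalities in $E^s \sim \norm{\eta,v}_{H_\varkappa^{s+1/2}\times H^s}^2$ separately, with essentially all the content sitting in the coercivity (lower) bound, where the sign-indefinite cubic modifier must be reabsorbed. Throughout I would write $w = J^{s-1/2}v$, so that, since the Fourier multipliers $J^{s-1/2}$ and $K^{-1}$ commute and $K^{-1}$ is self-adjoint, Definition \eqref{energy} rewrites as
\[
	E^s(\eta, v)
	=
	\frac 12 \varkappa \lVert \partial_x \eta \rVert_{H^{s-1/2}}^2
	+ \frac 12 \lVert \eta \rVert_{H^{s-1/2}}^2
	+ \frac 12 \int w K^{-2} w
	+ \frac 12 \int \eta w^2
	,
\]
where $\int w K^{-2} w = \lVert K^{-1}v \rVert_{H^{s-1/2}}^2$. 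The non-cavitation hypothesis furnishes constants $h, H > 0$ with $H \geqslant \eta \geqslant h - 1$ pointwise on $\R \times [0,T]$; equivalently $1 + \eta \geqslant h > 0$ and $\lVert \eta \rVert_{L^\infty} \leqslant \max(H, 1 - h)$.

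For the upper bound I would estimate the cubic term crudely by Hölder, $\bigabs{\int \eta w^2} \leqslant \lVert \eta \rVert_{L^\infty}\lVert w \rVert_{L^2}^2$. Since $K^{-2}$ has symbol $\abs\xi/\tanh\abs\xi \geqslant 1$ (because $\tanh\abs\xi \leqslant \abs\xi$), one has $\lVert w \rVert_{L^2}^2 \leqslant \int w K^{-2} w = \lVert K^{-1}v \rVert_{H^{s-1/2}}^2$, so the cubic term is controlled by $\lVert \eta \rVert_{L^\infty}$ times the energy norm. Combined with the already quadratic $\eta$-contributions, this yields $E^s(\eta,v) \lesssim \norm{\eta,v}_{H_\varkappa^{s+1/2}\times H^s}^2$ with constant depending only on the non-cavitation bounds.

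The key step is the lower bound. The idea is to keep a small fraction of the positive quadratic form $\int w K^{-2}w$ in reserve and spend the remainder against the possibly negative cubic term, using both $K^{-2} \geqslant 1$ and $1 + \eta \geqslant h$. Concretely, set $\theta = \min(h, 1) \in (0, 1]$ and split
\[
	\frac 12 \int w K^{-2} w + \frac 12 \int \eta w^2
	=
	\frac \theta 2 \int w K^{-2} w
	+ \left(
		\frac{1 - \theta}2 \int w K^{-2} w + \frac 12 \int \eta w^2
	\right)
	.
\]
In the parenthesized remainder I would again use $\int w K^{-2}w \geqslant \int w^2$, leaving $\frac 12 \int (1 - \theta + \eta) w^2 \geqslant 0$, since $1 - \theta + \eta \geqslant h - \theta \geqslant 0$ by the choice of $\theta$. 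Thus the velocity-plus-cubic part of $E^s$ is at least $\frac \theta 2 \lVert K^{-1}v \rVert_{H^{s-1/2}}^2$, and adjoining the untouched $\eta$-terms gives $E^s(\eta,v) \geqslant \frac 12 \min(h,1)\,\norm{\eta,v}_{H_\varkappa^{s+1/2}\times H^s}^2$.

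The main obstacle is exactly the sign of the cubic modifier in this lower bound: without non-cavitation the coefficient $1 + \eta$ in front of $w^2$ could vanish or become negative and destroy coercivity, and it is the high-frequency gain $K^{-2} \geqslant 1$ that lets one recover the full $\lVert K^{-1}v \rVert_{H^{s-1/2}}$-norm rather than merely the weaker $\lVert v \rVert_{H^{s-1/2}}$-norm. Both constants are uniform in $\varkappa$ and depend only on $h, H$, so no smallness assumption enters here; this is precisely why non-cavitation alone suffices at high regularity, whereas at low regularity one must instead invoke smallness of the $H^1 \times H^{1/2}$-norm to guarantee the same pointwise control on $\eta$.
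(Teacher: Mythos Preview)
Your argument is correct and supplies the details the paper omits: the paper simply calls this statement ``obvious'' and gives no proof, relying implicitly on exactly the pointwise inequality $1+\eta\geqslant h$ together with $K^{-2}\geqslant 1$ that you make explicit. Your splitting with $\theta=\min(h,1)$ is a clean way to recover the full $\lVert K^{-1}v\rVert_{H^{s-1/2}}$ contribution, and the constants you obtain depend only on the non-cavitation parameters, matching how the lemma is used downstream in the proof of the a~priori estimate.
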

%
%\begin{proof}
%%
%%
%It is clear that $\lVert \eta \rVert _{L^{\infty}}$
%is a continuous function defined on $[0, T]$.
%Hence there exists $H > 0$ such that
%$|\eta(x, t)| \leqslant H$ for any $x \in \mathbb R$
%and $0 \leqslant t \leqslant T$.
%This automatically gives the bound for $E^s$
%from above.
%The bound from below can be obtained exactly
%as in \cite{Kalisch_Pilod}.
%%
%%
%\end{proof}

\begin{corollary}[Energy estimate]
\label{energy_estimate_corollary}
	If the conditions of the previous two lemmas are satisfied
	then it holds true
	that
	\[
		\frac{d}{dt}E^s (\eta, v)
		\lesssim
		(1 + \varkappa)
		\left(
			E^s (\eta, v)
			+
			E^s (\eta, v) ^2
		\right)
	\]
	with the implicit constant independent of $\varkappa > 0$.
\end{corollary}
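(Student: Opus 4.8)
The plan is to combine the two preceding lemmas; the only genuine content is the observation that the coercivity equivalence is uniform in the surface tension parameter $\varkappa$, which is exactly what is needed to get an implicit constant independent of $\varkappa$.

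First I would recall the conclusion of Lemma \ref{energy_lemma}, namely
\[
	\frac{d}{dt}E^s(\eta,v) \leqslant C_s(1+\varkappa)\left(\norm{\eta,v}_{H_\varkappa^{s+1/2}\times H^s}^2 + \norm{\eta,v}_{H_\varkappa^{s+1/2}\times H^s}^4\right),
\]
where $C_s$ depends only on $s$. The goal is then to replace the square and fourth power of the energy norm on the right-hand side by the corresponding powers of $E^s(\eta,v)$ itself.

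Next I would invoke the coercivity lemma, which under the non-cavitation hypothesis yields $E^s(\eta,v)\sim\norm{\eta,v}_{H_\varkappa^{s+1/2}\times H^s}^2$. The point requiring care is that the implicit constants in this equivalence must not depend on $\varkappa$. This holds because the surface-tension contribution $\varkappa\norm{\partial_x\eta}_{H^{s-1/2}}^2$ enters $E^s$ and the norm $\norm{\eta,v}_{H_\varkappa^{s+1/2}\times H^s}^2$ in precisely the same way and is left untouched by the cubic modifier; only the term $\tfrac12\int\eta(J^{s-1/2}v)^2$ needs to be compared against $\tfrac12\norm{K^{-1}v}_{H^{s-1/2}}^2$. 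Since $K^{-2}=|D|/\tanh|D|\geqslant 1$ as a Fourier multiplier, one has $\norm{J^{s-1/2}v}_{L^2}\leqslant\norm{K^{-1}v}_{H^{s-1/2}}$, and the non-cavitation bounds $h-1\leqslant\eta\leqslant H$ control $\int\eta(J^{s-1/2}v)^2$ from above and below by multiples of $\norm{J^{s-1/2}v}_{L^2}^2$ with constants depending only on $h$, $H$ and $s$, never on $\varkappa$.

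Finally, with the $\varkappa$-uniform equivalence $\norm{\eta,v}_{H_\varkappa^{s+1/2}\times H^s}^2\sim E^s(\eta,v)$, and hence also $\norm{\eta,v}_{H_\varkappa^{s+1/2}\times H^s}^4\sim E^s(\eta,v)^2$, substitution into the inequality from Lemma \ref{energy_lemma} produces
\[
	\frac{d}{dt}E^s(\eta,v)\lesssim(1+\varkappa)\left(E^s(\eta,v)+E^s(\eta,v)^2\right)
\]
with an implicit constant absorbing $C_s$ together with the two equivalence constants, hence independent of $\varkappa$, as claimed. The only delicate step is the $\varkappa$-uniformity of the coercivity just described; the remainder is direct substitution.
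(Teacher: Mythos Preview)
Your argument is correct and matches the paper's intent: the corollary is stated there without proof, as an immediate combination of the two preceding lemmas. The one point you rightly isolate as needing care---that the coercivity constants depend only on the non-cavitation parameters $h,H$ and not on $\varkappa$---is exactly what the paper later makes explicit in the proof of Lemma~\ref{a_priori_estimate_lemma}, and your verification of it via $\norm{J^{s-1/2}v}_{L^2}\leqslant\norm{K^{-1}v}_{H^{s-1/2}}$ is correct.
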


As we shall see below,
the non-cavitation condition
is convenient to work with only
in the case of high regularity $s > 3/2$.
Then the time interval on which the condition
holds true can be easily estimated
through the first equation in \eqref{capillarity_sys}.
Our goal is to study well-posedness in spaces
of low regularity as well.
So in case of $s \leqslant 3/2$ we will have to
impose a stronger condition,
instead of non-cavitation,
namely smallness of the initial data norm, that we can control
in time with the help of the Hamiltonian conservation,
as the following lemma demonstrates.

\begin{lemma}
\label{energy_bound_lemma}
	There exists a constant $H > 0$ independent
	of the surface tension $\varkappa > 0$ such that
	for any $\epsilon \in (0, H]$ if
	a pair
	\(
		u = ( \eta, v )
		\in C\left([0,T];
		H^{s + 1/2}(\mathbb R)
		\times 
		H^s(\mathbb R)\right)
	\),
	having initial condition
	\(
		\lVert u(0) \rVert _{H_{\varkappa}^1 \times H^{1/2}}
		\leqslant \epsilon / 2
	\),
	solves System \eqref{capillarity_sys}
	then
	\(
		\lVert u(t) \rVert _{H_{\varkappa}^1 \times H^{1/2}}
		\leqslant \epsilon
	\)
	for any time $t \in [0,T]$.
\end{lemma}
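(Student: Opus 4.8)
The plan is to exploit the conservation of the Hamiltonian, using the fact that at the base regularity level the modified energy coincides with it, $E^{1/2}(\eta, v) = \mathcal H(\eta, v)$. Writing $N(t) = \norm{ u(t) }_{H_{\varkappa}^1 \times H^{1/2}}$ for brevity, I would first record the identity
\[
	\mathcal H(\eta(t), v(t))
	= \frac12 N(t)^2 + \frac12 \int \eta(t) \, v(t)^2
	,
\]
whose left-hand side is constant in $t$. The whole difficulty is that the cubic term destroys coercivity, so $N(t)^2$ is no longer directly controlled by the conserved quantity; one obtains only a one-sided algebraic inequality that must be upgraded to a pointwise bound through a continuity argument.

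The first step is to estimate the cubic term by $N$ with a constant independent of $\varkappa$. Applying Lemma \ref{Selberg_Tesfahun_lemma} with $a = 0$, $b = c = 1/2$ (so that $a+b+c = 1 > \frac12$ and all pairwise sums are nonnegative) gives
\[
	\Abs{ \int \eta v^2 }
	\lesssim
	\norm{ \eta }_{L^2} \norm{ v }_{H^{1/2}}^2
	\lesssim
	\norm{ \eta }_{L^2} \norm{ K^{-1} v }_{L^2}^2
	\leqslant C_0 N^3
	,
\]
where I use the equivalence of $K^{-1}$ and $J^{1/2}$ and the fact that $\norm{\eta}_{L^2}$ and $\norm{K^{-1}v}_{L^2}$ are each bounded by $N$; the resulting constant $C_0$ is $\varkappa$-independent. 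Combining this with the displayed identity, once at a general time $t$ and once at $t = 0$, yields
\[
	\frac12 N(t)^2 - \frac12 C_0 N(t)^3
	\leqslant
	\mathcal H
	\leqslant
	\frac12 N(0)^2 + \frac12 C_0 N(0)^3
	\leqslant
	\frac18 \epsilon^2 + \frac1{16} C_0 \epsilon^3
	=: M(\epsilon)
	,
\]
where the hypothesis $N(0) \leqslant \epsilon/2$ enters in the last step.

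It then remains to convert the algebraic inequality $g(N(t)) \leqslant M(\epsilon)$, with $g(r) = \frac12 r^2 - \frac12 C_0 r^3$, into $N(t) \leqslant \epsilon$. The function $g$ increases on $[0, N_*]$ with $N_* = 2/(3C_0)$ and then decreases, so for $M(\epsilon) < g(N_*)$ its sublevel set $\{ r \geqslant 0 : g(r) \leqslant M(\epsilon) \}$ splits as $[0, N_1] \cup [N_2, \infty)$ with $N_1 < N_* < N_2$. I would choose $H = 1/(2C_0)$, which is manifestly independent of $\varkappa$; a direct computation shows that for $\epsilon \leqslant H$ one has both $\epsilon < N_*$ and the strict inequality $g(\epsilon) > M(\epsilon)$, forcing $N_1 < \epsilon < N_2$, so that the gap $(N_1, N_2)$ contains $\epsilon$. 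Since $g(N(0)) \leqslant M(\epsilon)$ and $N(0) \leqslant \epsilon/2 < N_2$, the starting value lies in $[0, N_1]$, and because $t \mapsto N(t)$ is continuous (by the assumed regularity of $u$ and the embedding $H^{s+1/2}\times H^s \hookrightarrow H^1_{\varkappa} \times H^{1/2}$) its connected image cannot jump across the gap. Hence $N(t) \leqslant N_1 < \epsilon$ for all $t \in [0, T]$. The main obstacle is exactly this loss of coercivity: the conclusion cannot be read off a monotone relation and genuinely relies on the connectedness argument above, with the smallness threshold $H$ dictated by the need to keep the two branches of the sublevel set separated.
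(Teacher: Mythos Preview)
Your argument is correct and follows essentially the same route as the paper: both exploit conservation of $\mathcal H$, bound the cubic term by $C\lVert u\rVert^3$ (the paper via H\"older and the embedding $H^{1/4}\hookrightarrow L^4$, you via Lemma~\ref{Selberg_Tesfahun_lemma}), and close with a continuity argument. The paper phrases the bootstrap more directly---assuming $\lVert u\rVert\leqslant\epsilon$ on a subinterval and showing the strict bound $\lVert u\rVert\leqslant\sqrt{2\mathcal H(u_0)}<\epsilon$---but this is the same mechanism as your sublevel-set formulation.
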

\begin{proof}
We use a continuity argument.
%Without loss of generality
%we prove the statement with
%a norm equivalent to $H^1 \times H^{1/2}$-norm
%instead.
%Regard the norm defined by
We simply write
\[
	\lVert u \rVert ^2
	=
	\frac 12 \lVert u(t) \rVert _{H_{\varkappa}^1 \times H^{1/2}}^2
	.
\]
Then
there exists $C > 0$ independent
of $\varkappa > 0$ such that
\[
	\left| \int \eta v^2 \right|
	\leqslant
	\norm{ \eta }_{L^2}
	\norm{ v }_{L^4}^2
	\leqslant
	C \norm{ u }^3
	,
\]
and so
\[
	\lVert u \rVert ^2
	( 1 - C \lVert u \rVert )
	\leqslant
	\mathcal H(u)
	\leqslant
	\lVert u \rVert ^2
	( 1 + C \lVert u \rVert )
	,
\]
where $u = u(t)$ is a solution of \eqref{capillarity_sys}
defined on some interval.
Take $H = (2C)^{-1}$,
any $0 < \epsilon \leqslant H$
and a solution with $u_0 = u(0)$
having $\lVert u_0 \rVert \leqslant \epsilon / 2$.
By continuity
$\lVert u \rVert \leqslant \epsilon$
on some $[0, T_{\epsilon}]$ and so
\[
	\lVert u \rVert
	\leqslant
	\sqrt{2 \mathcal H(u)} = \sqrt{2 \mathcal H(u_0)}
	\leqslant
	\sqrt{\frac {1 + C \epsilon /2}2 } \epsilon < \epsilon
	.
\]
Hence the function $u$ satisfies that
$\lVert u(t) \rVert$ does not reach the level
$\epsilon$ at any time $t$.
%which means that
%the continuous function
%$\lVert u(t) \rVert$ cannot touch the level
%$\epsilon$ with time.
%
%
\end{proof}

As a consequence of the lemma we can control
$\lVert \eta \rVert _{L^{\infty}}$
for any $s \geqslant 1/2$ in time,
admitting only small initial data, by the inequality
\[
	\lVert \eta \rVert _{L^{\infty}}
	\lesssim
	\left( 1 + \frac 1{\varkappa} \right)
	\lVert \eta, v \rVert _{H_{\varkappa}^1 \times H^{1/2}}
	,
\]
which
guarantees non-cavitation, in particular.

%%%%%%%%%%%%%%%%%%%%%%%%%%%%%%%%%%%%%%%%%%%%%%%%%%%%%%%%%%%%%%%%%%%%%%%%%%%%%
\section{Uniqueness type estimate}
\label{Uniqueness_type_estimate}
%%%%%%%%%%%%%%%%%%%%%%%%%%%%%%%%%%%%%%%%%%%%%%%%%%%%%%%%%%%%%%%%%%%%%%%%%%%%%
\setcounter{equation}{0}
%%%%%%%%%%%%%%%%%%%%%%%%%%%%%%%%%%%%%%%%%%%%%%%%%%%%%%%%%%%%%%%%%%%%%%%%%%%%%
%
Suppose that
we have two solution pairs $\eta_1$, $v_1$ and $\eta_2$, $v_2$
of System \eqref{capillarity_sys} on some time interval.
Define functions $\theta = \eta_1 - \eta_2$, $w = v_1 - v_2$.
Then $\theta$ and $w$ satisfy the following system
%
%\begin{align}
\begin{equation}
\label{uniqueness_sys}
\left\{
\begin{aligned}
	\theta_t &=
	- \partial_x w - i \tanh D (\theta v_2 + \eta_1 w)
	, \\
	w_t &=
	- i \tanh D (1 + \varkappa D^2) \theta - i \tanh D ((v_1+v_2)w)/2
	.
\end{aligned}
\right.
\end{equation}
%\end{align}
%
We need an a priori estimate similar to one obtained in
the previous section for the difference of solutions.
For this purpose we introduce the difference energy
\begin{equation}
\label{difference_energy}
	E^r(\eta_1, v_1, \eta_2, v_2)
	= \frac {\varkappa}2 \lVert \theta \rVert _{H^{r + 1/2}}^2
	+ \frac 12 \lVert w \rVert _{H^r}^2
	+ \frac 12 \int \eta_1 \left( J^{r - \frac 12 } w \right) ^2
	.
\end{equation}
%
%Note that
%\(
%	E^s(\eta, v)
%	=
%	E^s(\eta, v, 0, 0)
%\)
%and so this new notation is in line with \eqref{energy}.
%
\begin{lemma}
\label{difference_energy_lemma}
	Let
	\(
		\eta_1, v_1, \eta_2, v_2 \in C^1 \left( (0, T);
		H^{\infty}(\mathbb{R}) \right)
	\)
	be solutions of System \eqref{capillarity_sys} for some $T>0$
	and $s > 1/2$.
	Their difference is denoted by $(\theta, w)$.
	Let $0 < r \leqslant s - 1/2$.
	Then
	\[
		\frac d{dt} E^r(\eta_1, v_1, \eta_2, v_2)
		\lesssim
		\left(
			1 + \lVert \eta_1 \rVert _{H^{s + 1/2}}^2
			+
			\lVert v_1 \rVert _{H^s}^2
			+
			\lVert v_2 \rVert _{H^s}^2
		\right)
		\left(
			\lVert \theta \rVert _{H^{r + 1/2}}^2
			+		
			\lVert w \rVert _{H^r}^2
		\right)
		,
	\]
	where the implicit constant depends on
	$\varkappa, r, s$.
\end{lemma}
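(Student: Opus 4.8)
The plan is to differentiate the difference energy $E^r$ along the linearized-type system \eqref{uniqueness_sys}, following the same scheme as in the proof of Lemma \ref{energy_lemma}. Throughout it is convenient to write $S = i\tanh D$: since $S = \partial_x\,\tfrac{\tanh D}{D}$ with $\tfrac{\tanh D}{D}$ self-adjoint of order zero, $S$ is a bounded, skew-symmetric operator on every $H^\sigma(\mathbb R)$, i.e. $\int f\,Sg = -\int (Sf)\,g$. Differentiating the three summands of \eqref{difference_energy} and inserting the expressions for $\theta_t$, $\partial_t\eta_1$ and $\partial_t w$, I would collect the integrals into four groups: (i) the genuinely linear terms in $(\theta,w)$ coming from $-\partial_x w$ and $-S(1+\varkappa D^2)\theta$; (ii) the dangerous term carrying $\varkappa D^2\theta$ with the coefficient $\eta_1$, produced by the modifier; (iii) its partner $-\varkappa\int(J^{r+1/2}\theta)\,J^{r+1/2}S(\eta_1 w)$ coming from the $\eta_1 w$ nonlinearity in the derivative of $\tfrac{\varkappa}{2}\lVert\theta\rVert_{H^{r+1/2}}^2$; and (iv) all the remaining genuinely nonlinear integrals, i.e. products of $(\theta,w)$ with the background quantities $\eta_1,v_1,v_2$.

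For group (i) I would not try to cancel the linear terms exactly — unlike in Lemma \ref{energy_lemma}, the difference energy deliberately measures $w$ in the unweighted norm $\lVert w\rVert_{H^r}$. After integrating $-\partial_x$ by parts and using skew-symmetry of $S$, the linear contribution reduces to a bilinear expression $\int\theta\,m(D)w$ whose multiplier has modulus $|m(\xi)|\lesssim_\varkappa\langle\xi\rangle^{2r}$, because the bracket $(1+\varkappa\xi^2)\tanh\xi-\varkappa\xi\langle\xi\rangle$ stays bounded as $|\xi|\to\infty$ (expanding $\tanh\xi=1-2e^{-2\xi}+\dots$ and $\langle\xi\rangle=\xi+\tfrac1{2\xi}+\dots$ for $\xi\to+\infty$ shows it tends to $1-\varkappa/2$). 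Writing the contribution as $\int(J^{r+1/2}\theta)\,J^{-(r+1/2)}m(D)w$ and noting that $J^{-(r+1/2)}m(D)$ has symbol $O(\langle\xi\rangle^{r-1/2})$, one bounds it by $\lVert\theta\rVert_{H^{r+1/2}}\lVert w\rVert_{H^{r-1/2}}$; the half-derivative gap between $\lVert\theta\rVert_{H^{r+1/2}}$ and $\lVert w\rVert_{H^r}$ is exactly what makes this $\lesssim\lVert\theta\rVert_{H^{r+1/2}}^2+\lVert w\rVert_{H^r}^2$.

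The hard part will be groups (ii)--(iii), which is precisely why the cubic modifier $\tfrac12\int\eta_1(J^{r-1/2}w)^2$ sits in \eqref{difference_energy}. The modifier produces $-\varkappa\int\eta_1(J^{r-1/2}w)\,J^{r-1/2}SD^2\theta$, which cannot be controlled by commutators alone. Mirroring the treatment of $I_1+I_5$ in Lemma \ref{energy_lemma}, I would write $\eta_1(J^{r-1/2}w)=J^{r-1/2}(\eta_1 w)-[J^{r-1/2},\eta_1]w$ and move the self-adjoint/skew operators onto the $\eta_1 w$ factor, so that the two principal parts of (ii) and (iii) combine into $\varkappa\int(\eta_1 w)\,S\,(J^{2r+1}-D^2J^{2r-1})\theta$; since $J^2-D^2=\id$, this collapses to $\varkappa\int(\eta_1 w)\,SJ^{2r-1}\theta$, i.e. two derivatives on $\theta$ are gained back. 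Pairing $SJ^{2r-1}\theta=SJ^{r-3/2}(J^{r+1/2}\theta)$ and moving $SJ^{r-3/2}$ onto $\eta_1 w$ bounds it by $\lVert\eta_1 w\rVert_{H^{r-3/2}}\lVert\theta\rVert_{H^{r+1/2}}\lesssim\lVert\eta_1\rVert_{H^{s+1/2}}\lVert w\rVert_{H^r}\lVert\theta\rVert_{H^{r+1/2}}$, while the leftover commutator $\varkappa\int([J^{r-1/2},\eta_1]w)\,SD^2J^{r-1/2}\theta$ is handled, after one integration by parts, by the Kato--Ponce estimate of Lemma \ref{Kato_Ponce_lemma}, using the same splitting of Lebesgue exponents ($s\in(1/2,1)$ versus $s\geqslant1$) employed in Lemma \ref{energy_lemma}.

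Finally, group (iv) is routine. Because $S$ is bounded on every $H^\sigma$, the terms $-\varkappa\int(J^{r+1/2}\theta)J^{r+1/2}S(\theta v_2)$, $-\tfrac12\int(J^r w)J^rS((v_1+v_2)w)$ and $-\tfrac12\int\eta_1(J^{r-1/2}w)J^{r-1/2}S((v_1+v_2)w)$ reduce to products estimated by the multiplier property of $H^s$ for $s>1/2$ (for the first two the top-order pieces $\int v_2(J^{r+1/2}\theta)^2$ and $\int(v_1+v_2)(J^rw)^2$ are directly of size $\lVert v\rVert_{L^\infty}\lVert\cdot\rVert^2$, the remainders being commutators). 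The trilinear terms $-\tfrac12\int(\partial_x v_1)(J^{r-1/2}w)^2$ and $-\tfrac12\int S(\eta_1 v_1)(J^{r-1/2}w)^2$ fall under Lemma \ref{Selberg_Tesfahun_lemma}, whose admissible-exponent conditions one checks to hold precisely for $s>1/2$, and $-\int\eta_1(J^{r-1/2}w)J^{r-1/2}S\theta$ is immediate from $\lVert\eta_1\rVert_{L^\infty}\lesssim\lVert\eta_1\rVert_{H^{s+1/2}}$. Each of these is bounded by $(1+\lVert\eta_1\rVert_{H^{s+1/2}}^2+\lVert v_1\rVert_{H^s}^2+\lVert v_2\rVert_{H^s}^2)(\lVert\theta\rVert_{H^{r+1/2}}^2+\lVert w\rVert_{H^r}^2)$; summing all four groups yields the claim. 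Note that $\eta_2$ never enters any of these integrals, consistent with the stated right-hand side.
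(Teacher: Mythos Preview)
Your overall plan is correct and mirrors the paper's proof: differentiate the three pieces of $E^r$, observe that the purely linear contributions combine into a multiplier of order at most $2r$ and hence cost only $\lVert\theta\rVert_{H^{r+1/2}}\lVert w\rVert_{H^r}$, and use the cubic modifier to kill the dangerous $\varkappa D^2$ term. Your algebraic cancellation in groups (ii)--(iii) via $J^{2r+1}-D^2J^{2r-1}=J^{2r-1}$ is a clean observation and genuinely works. The treatment of group (iv) is also fine.

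The gap is in the leftover commutator. After your cancellation you are left with
\[
\varkappa\int\bigl([J^{r-1/2},\eta_1]w\bigr)\,SD^2J^{r-1/2}\theta,
\]
and you claim this is handled ``after one integration by parts, by the Kato--Ponce estimate of Lemma~\ref{Kato_Ponce_lemma}''. But Lemma~\ref{Kato_Ponce_lemma} is stated only for $[J^\sigma,f]$ with $\sigma\geqslant1$, whereas here $\sigma=r-1/2\in(-1/2,\,s-1]$, which is non-positive whenever $r\leqslant 1/2$ (i.e.\ for all $s\leqslant 1$). Moving one $D$ to the left gives $D[J^{r-1/2},\eta_1]w=[J^{r-1/2},D\eta_1]w+[J^{r-1/2},\eta_1]Dw$; the second piece needs control of $Dw$ in $L^2$, which you do not have for $r<1$, and the first needs a product estimate $H^{s-1/2}\times H^r\to L^2$ that fails when $s+r\leqslant 1$. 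So the argument as written does not close in the low-regularity regime $0<r\leqslant 1/2$.

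The paper avoids this by placing the commutator on the \emph{other} factor: it writes the dangerous terms as $I_1=i\varkappa\int(J^{r-1/2}D\theta)\,J^{r+1/2}(\eta_1 w)$ and $I_2=-i\varkappa\int(J^{r-1/2}D\theta)\,J(\eta_1 J^{r-1/2}w)$, then extracts the common principal part $\pm i\varkappa\int(J^{r-1/2}D\theta)\,\eta_1 J^{r+1/2}w$ using the commutators $[J^{r+1/2},\eta_1]$ and $[J,\eta_1]$. These have exponents $r+1/2>1/2$ and $1$ respectively; for $r\geqslant1/2$ Kato--Ponce applies directly, while for $r\in(0,1/2)$ the paper substitutes the fractional Leibniz rule (Lemma~\ref{LeibnizRule_lemma}) for $|D|^{r+1/2}$, using that $J^{r+1/2}-|D|^{r+1/2}$ is $L^2$-bounded. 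Reorganising your (ii)--(iii) in this way fixes the gap with no change to the rest of your argument.
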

\begin{proof}
We follow the same arguments as in the proof of
Lemma \ref{energy_lemma}.
The derivative of squared norm
\begin{multline*}
	\frac {\varkappa}2 \frac{d}{dt} \lVert \theta \rVert _{H^{r+1/2}}^2
	+
	\frac 12 \frac{d}{dt} \lVert w \rVert _{H^r}^2
	=
	-\varkappa \int \left( J^{r + \frac 12} \theta \right)
	J^{r + \frac 12} \partial_x w
	- i\varkappa \int \left( J^{r + \frac 12} \theta \right)
	J^{r + \frac 12} \tanh D(\theta v_2)
	\\
	- i\varkappa \int \left( J^{r + \frac 12} \theta \right)
	J^{r + \frac 12} \tanh D(\eta_1 w)
	- i \int ( J^r w ) J^r \tanh D \theta
	\\
	- i \varkappa \int ( J^r w ) J^r D^2 \tanh D \theta
	- \frac i2 \int ( J^r w ) J^r \tanh D (v_1 + v_2)w
	\\
	=
	I_1 + \mathcal O
	\left(
		\lVert \theta \rVert _{H^r}
		\lVert w \rVert _{H^r}
		+
		\lVert v_2 \rVert _{H^{r + 1/2}}
		\lVert \theta \rVert _{H^{r + 1/2}}^2
		+
		\lVert \eta_1 \rVert _{H^r}
		\lVert \theta \rVert _{H^r}
		\lVert w \rVert _{H^r}
		+
		\lVert v_1 + v_2 \rVert _{H^s}
		\lVert w \rVert _{H^r}^2
	\right)
	,
\end{multline*}
where
\[
	I_1 =
	i\varkappa \int \left( J^{r - \frac 12} D \theta \right)
	J^{r + \frac 12} (\eta_1 w)
	.
\]
In the case $r \geqslant 1/2$ we have the
commutator estimate
\[
	\left\lVert
		\left[ J^{r + \frac 12}, \eta_1 \right] w
	\right\rVert _{L^2}
	\lesssim
	\lVert \partial_x \eta_1 \rVert _{L^4}
	\left\lVert
		J^{r - \frac 12} w
	\right\rVert _{L^4}
	+
	\left\lVert
		J^{r + \frac 12} \eta_1
	\right\rVert _{L^4}
	\lVert w \rVert _{L^4}
	\lesssim
	\lVert \eta_1 \rVert _{H^{s + 1/2}}
	\lVert w \rVert _{H^r}
	,
\]
and so
\begin{equation}
\label{I1_rest}
	I_1 =
	i\varkappa \int \left( J^{r - \frac 12} D \theta \right)
	\eta_1 J^{r + \frac 12} w
	+ \mathcal O
	\left(
		\lVert \eta_1 \rVert _{H^{s + 1/2}}
		\lVert \theta \rVert _{H^{r + 1/2}}
		\lVert w \rVert _{H^r}
	\right)
	.
\end{equation}
For $r \in (0, 1/2)$ we apply the Leibniz rule
\[
	\left\lVert
		|D|^{r + \frac 12} (\eta_1 w)
		- w |D|^{r + \frac 12} \eta_1
		- \eta_1 |D|^{r + \frac 12} w
	\right\rVert _{L^2}
	\lesssim
	\left\lVert
		|D|^{\sigma_1} \eta_1
	\right\rVert _{L^{p_1}}
	\left\lVert
		|D|^{\sigma_2} w
	\right\rVert _{L^{p_2}}
	\lesssim
	\lVert \eta_1 \rVert _{H^1}
	\lVert w \rVert _{H^r}
	,
\]
where $p_2 > 2$ is such that
$\sigma_2 = r - 1/2 + 1/{p_2} > 0$.
The last estimate is due to
Sobolev's embedding.
Operator
\(
	J^{r + \frac 12 } - |D|^{r + \frac 12 }
\)
is bounded in $L^2$.
Thus
\begin{multline*}
	I_1 =
	i\varkappa \int \left( J^{r - \frac 12} D \theta \right)
	|D|^{r + \frac 12} (\eta_1 w)
	+ \mathcal O
	\left(
		\lVert \eta_1 \rVert _{H^{s + 1/2}}
		\lVert \theta \rVert _{H^{r + 1/2}}
		\lVert w \rVert _{H^r}
	\right)
	\\
	=
	i\varkappa \int \left( J^{r - \frac 12} D \theta \right)
	w |D|^{r + \frac 12} \eta_1
	+	
	i\varkappa \int \left( J^{r - \frac 12} D \theta \right)
	\eta_1 |D|^{r + \frac 12} w
	+ \mathcal O
	\left(
		\lVert \eta_1 \rVert _{H^{s + 1/2}}
		\lVert \theta \rVert _{H^{r + 1/2}}
		\lVert w \rVert _{H^r}
	\right)
	,
\end{multline*}
where the first integral can be estimated by
interpolation in Sobolev spaces.
In the second integral the fractional derivative
\(
	|D|^{r + \frac 12 }
\)
can be approximated by
\(
	J^{r + \frac 12 }
\)
to come again to \eqref{I1_rest} now for $0 < r < 1/2$.

Differentiation of the energy modifier
gives
\begin{multline*}
	\frac 12 \frac {d}{dt}
	\int \eta_1 \left( J^{r - \frac 12 } w \right) ^2
	=
	- i \int \eta_1 \left( J^{r - \frac 12 } w \right)
	J^{r - \frac 12 } \tanh D \theta
	- i \varkappa \int \eta_1 \left( J^{r - \frac 12 } w \right)
	J^{r - \frac 12 } D^2 \tanh D \theta
	\\
	- \frac i2 \int \eta_1 \left( J^{r - \frac 12 } w \right)
	J^{r - \frac 12 } \tanh D (v_1 + v_2)w
	- \frac 12
	\int \partial_x v_1 \left( J^{r - \frac 12 } w \right) ^2
	- \frac i2
	\int \tanh D (\eta_1 v_1) \left( J^{r - \frac 12 } w \right) ^2
	\\
	=
	I_2 + \mathcal O
	\left(
		\lVert \eta_1 \rVert _{H^s}
		\lVert \theta \rVert _{H^{r - 1/2}}
		\lVert w \rVert _{H^{r - 1/2}}
		+
		\left(
			1 + \lVert \eta_1 \rVert _{H^s}
		\right)
		\left(
			\lVert v_1 \rVert _{H^s}
			+			
			\lVert v_2 \rVert _{H^s}
		\right)
		\lVert w \rVert _{H^r}^2
	\right)
	,
\end{multline*}
where
\begin{multline*}
	I_2 =
	- i\varkappa \int \left( J^{r - \frac 12} D \theta \right)
	J (\eta_1 J^{r - \frac 12} w)
	=
	- i\varkappa \int \left( J^{r - \frac 12} D \theta \right)
	\eta_1 J^{r + \frac 12} w
	\\
	+ \lVert \theta \rVert _{H^{r + 1/2}}
	\mathcal O	
	\left(
		\lVert \partial_x \eta_1 \rVert _{L^{p_1}}
		\lVert J^{r - \frac 12} w \rVert _{L^{p_2}}
		+
		\lVert J \eta_1 \rVert _{L^{p_3}}
		\lVert J^{r - \frac 12} w \rVert _{L^{p_4}}
	\right)
	\\
	=
	- i\varkappa \int \left( J^{r - \frac 12} D \theta \right)
	\eta_1 J^{r + \frac 12} w
	+ \mathcal O	
	\left(
		\lVert \eta_1 \rVert _{H^{s + 1/2}}
		\lVert \theta \rVert _{H^{r + 1/2}}
		\lVert w \rVert _{H^r}
	\right)
\end{multline*}
following from the Kato--Ponce inequality
with $p_1 = p_3 = \frac{1}{1-s}$,
$p_2 = p_4 = \frac{2}{2s-1}$
for $s \in \left( \frac 12, 1 \right)$
and $p_1 = p_2 = p_3 = p_4 = 4$ for $s \geqslant 1$.
Summing $I_2$ together with $I_1$ calculated
in \eqref{I1_rest} we conclude the proof.
\end{proof}

\begin{corollary}[Energy estimate for difference]
	If in addition to
	the conditions of the previous lemma we assume
	non-cavitation for $\eta_1$ then
	\[
		\frac d{dt} E^r(\eta_1, v_1, \eta_2, v_2)
		\lesssim
		\left(
			1 + \lVert \eta_1 \rVert _{H^{s + 1/2}}^2
			+
			\lVert v_1 \rVert _{H^s}^2
			+
			\lVert v_2 \rVert _{H^s}^2
		\right)
		E^r(\eta_1, v_1, \eta_2, v_2)
		.
	\]
\end{corollary}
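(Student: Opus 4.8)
The plan is to read the statement off Lemma~\ref{difference_energy_lemma} together with a coercivity property of the difference energy. Concretely, I would first prove that under the non-cavitation hypothesis on $\eta_1$ one has the equivalence
\[
	E^r(\eta_1, v_1, \eta_2, v_2)
	\sim
	\lVert \theta \rVert _{H^{r + 1/2}}^2
	+
	\lVert w \rVert _{H^r}^2
	,
\]
with constants depending only on $\varkappa, r, s$ and the non-cavitation parameters $h, H$. Granting this, it suffices to insert the lower bound $\lVert \theta \rVert _{H^{r + 1/2}}^2 + \lVert w \rVert _{H^r}^2 \lesssim E^r$ into the right-hand side of the conclusion of Lemma~\ref{difference_energy_lemma}; the factor $1 + \lVert \eta_1 \rVert _{H^{s + 1/2}}^2 + \lVert v_1 \rVert _{H^s}^2 + \lVert v_2 \rVert _{H^s}^2$ is untouched, and the stated differential inequality follows immediately.

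For the coercivity I would argue by the same mechanism as in the Coercivity Lemma for $E^s$, the only change being that the velocity term here is the plain $H^r$-norm rather than the $K^{-1}$-weighted one. The contribution $\frac{\varkappa}{2}\lVert\theta\rVert_{H^{r+1/2}}^2$ already controls the full $\theta$-part up to the fixed positive factor $\varkappa$, so the substance is the velocity term paired with the cubic modifier. Setting $W = J^{r-1/2} w$ and using $\lVert w\rVert_{H^r}^2 = \int W J W$, I would write
\[
	\tfrac 12 \int W J W + \tfrac 12 \int \eta_1 W^2
	=
	\tfrac 12 \int W (J - 1) W + \tfrac 12 \int (1 + \eta_1) W^2 .
\]
The first term is nonnegative because $\langle\xi\rangle \geqslant 1$, while non-cavitation gives $1 + \eta_1 \geqslant h > 0$ pointwise, so the second term is bounded below by $\tfrac h2 \lVert W\rVert_{L^2}^2$. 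Since the symbol $\langle\xi\rangle - 1 + h$ satisfies $\langle\xi\rangle - 1 + h \geqslant \min(h,1)\,\langle\xi\rangle$ for all $\xi$, the sum dominates $\lVert W\rVert_{H^{1/2}}^2 = \lVert w\rVert_{H^r}^2$, which combined with the $\theta$-term yields the lower bound. The matching upper bound is routine: $\bigl\lvert\int\eta_1 W^2\bigr\rvert \leqslant \lVert\eta_1\rVert_{L^\infty}\lVert w\rVert_{H^{r-1/2}}^2 \leqslant \lVert\eta_1\rVert_{L^\infty}\lVert w\rVert_{H^{r}}^2$, with $\lVert\eta_1\rVert_{L^\infty}$ controlled by the non-cavitation constants.

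The main (and essentially only) obstacle is this lower bound on the velocity part, where a potentially negative cubic term must be absorbed by the plain $H^r$-norm. It works precisely because $\tanh$ is absent in the difference energy and $J = \langle D\rangle \geqslant 1$ is strictly elliptic, so the gap $\langle\xi\rangle - 1 + h$ stays bounded away from zero uniformly in $\xi$; this is exactly the point where the non-cavitation hypothesis is indispensable. Everything else is a direct substitution, and since $\varkappa$ is a fixed positive constant, the loss of a factor $\varkappa^{-1}$ when passing from $E^r$ back to $\lVert\theta\rVert_{H^{r+1/2}}^2$ is harmless and is absorbed into the implicit constant.
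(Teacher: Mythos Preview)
Your approach is correct and matches the paper's: the paper's entire proof reads ``Non-cavitation implies coercivity for $E^r$ and the rest is obvious,'' so you have simply spelled out the coercivity step in more detail than the paper bothers to. Your lower-bound argument via the symbol inequality $\langle\xi\rangle - 1 + h \geqslant \min(h,1)\,\langle\xi\rangle$ is clean and correct.
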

\begin{proof}
Non-cavitation implies coercivity for $E^r$ and the rest is obvious.
\end{proof}

\begin{remark}
	The restriction $s > 1/2$ appeared in the lemma
	and its corollary is inconvenient.
	It comes from the loss of Hamiltonian structure
	of System \eqref{uniqueness_sys}.
	This results in the fact that we can obtain
	only a weak solution in case $s = 1/2$ and
	probably not unique.
\end{remark}

%%%%%%%%%%%%%%%%%%%%%%%%%%%%%%%%%%%%%%%%%%%%%%%%%%%%%%%%%%%%%%%%%%%%%%%%%%%%%
\section{Parabolic regularisation}
\label{Parabolic_regularisation}
%%%%%%%%%%%%%%%%%%%%%%%%%%%%%%%%%%%%%%%%%%%%%%%%%%%%%%%%%%%%%%%%%%%%%%%%%%%%%
\setcounter{equation}{0}
%%%%%%%%%%%%%%%%%%%%%%%%%%%%%%%%%%%%%%%%%%%%%%%%%%%%%%%%%%%%%%%%%%%%%%%%%%%%%

For application of the energy method
we need to do a parabolic regularisation
of the view
\begin{equation}
\label{reg_capillarity_sys}
\left\{
\begin{aligned}
	\eta_t
	+ v_x + i \tanh D (\eta v)
	&= - \varkappa \mu |D|^p \eta
	, \\
	v_t 
	+ i \tanh D (1 + \varkappa D^2) \eta + i \tanh D v^2 / 2
	&= - \varkappa \mu |D|^p v 
\end{aligned}
\right.
\end{equation} 
where $\mu \in (0, 1)$.
We want
to prove solution existence
for \eqref{reg_capillarity_sys} for any given $\mu$,
by the contraction mapping principal
and so $p$ should be big enough.
However, we also do not want to spoil our energy
estimates, and so $p$ should be small enough.
As we shall see below, this bounds us to $p \in (1/2, 1]$.
%In fact, $p = 1$ will be also excluded to make
%sure that the viscous model \eqref{reg_capillarity_sys}
%is dissipative.
Here the left number comes from the following lemma.

\begin{lemma}
\label{regularised_estimate_lemma}
	For any $s \geqslant 1/2$, $\mu >0$
	%$\mu \in (0, 1)$
	and $p > 1/2$ there exists
	a finite positive bound $C(T)$,
	tending to zero as $T \to 0$,
	such that
	\[
		\int_0^T
		\left \lVert
		e^{ -\mu t |D|^p }
		(f(t)g(t))
		\right \rVert
		_{H^r}
		dt
		\leqslant
		C(T) \norm{f}_{C_TH^r} \norm{g}_{C_TH^s}
	\]
	for any functions $f, g$ defined on $[0, T]$.
	Here either $r = s + 1/2$ or $r = s$.
\end{lemma}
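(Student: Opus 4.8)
The plan is to decouple the estimate into two independent ingredients: a smoothing bound for the regularising semigroup $e^{-\mu t |D|^p}$, which gains half a derivative at an integrable cost in time, and a Sobolev multiplication estimate for the product $fg$, which loses exactly half a derivative. First I would record the Fourier multiplier bound
\[
	\bignorm{ \langle D \rangle^{1/2} e^{-\mu t |D|^p} }_{L^2 \to L^2}
	= \sup_{\xi \in \R} \langle \xi \rangle^{1/2} e^{-\mu t |\xi|^p}
	\lesssim 1 + (\mu t)^{-1/(2p)} ,
\]
which follows by splitting the frequency axis into $|\xi| \leqslant 1$, where $\langle \xi \rangle^{1/2}$ is bounded, and $|\xi| > 1$, where $\langle \xi \rangle^{1/2} \lesssim |\xi|^{1/2}$ and $\sup_{x > 0} x^{1/2} e^{-\mu t x^p} = C_p (\mu t)^{-1/(2p)}$. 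Factoring $\langle D \rangle^r = \langle D \rangle^{1/2} \langle D \rangle^{r - 1/2}$, this yields for every $h$ the smoothing bound
\[
	\norm{ e^{-\mu t |D|^p} h }_{H^r}
	\lesssim \left( 1 + (\mu t)^{-1/(2p)} \right) \norm{ h }_{H^{r - 1/2}} .
\]

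Second, I would establish the product estimate $\norm{ fg }_{H^{r - 1/2}} \lesssim \norm{ f }_{H^r} \norm{ g }_{H^s}$ in both admissible cases, that is $\norm{ fg }_{H^{s - 1/2}} \lesssim \norm{ f }_{H^s} \norm{ g }_{H^s}$ for $r = s$ and $\norm{ fg }_{H^s} \lesssim \norm{ f }_{H^{s + 1/2}} \norm{ g }_{H^s}$ for $r = s + 1/2$. Since $r \geqslant s \geqslant 1/2$ the target index $r - 1/2 \geqslant 0$, and both are instances of the standard one-dimensional Sobolev multiplication law with a loss of exactly half a derivative; they can be derived from the lemmas already available, namely the fractional Leibniz rule (Lemma \ref{LeibnizRule_lemma}) and the trilinear estimate (Lemma \ref{Selberg_Tesfahun_lemma}) together with the Kato--Ponce commutator estimate (Lemma \ref{Kato_Ponce_lemma}), treating the ranges $s \geqslant 1$ and $1/2 \leqslant s < 1$ separately exactly as in the proof of Lemma \ref{energy_lemma}. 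I would not reproduce this routine case analysis.

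Combining the two ingredients with $h = f(t) g(t)$ and passing to the time-uniform norms gives the pointwise bound
\[
	\norm{ e^{-\mu t |D|^p} (f(t) g(t)) }_{H^r}
	\lesssim \left( 1 + (\mu t)^{-1/(2p)} \right) \norm{ f }_{C_T H^r} \norm{ g }_{C_T H^s} ,
\]
and integrating over $[0, T]$ produces $\int_0^T \left( 1 + (\mu t)^{-1/(2p)} \right) \d t = T + \frac{\mu^{-1/(2p)}}{1 - 1/(2p)} \, T^{1 - 1/(2p)}$, which is finite and tends to zero as $T \to 0$; one then sets $C(T)$ equal to the implicit constant times this quantity. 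The convergence of the time integral near $t = 0$ is exactly where the hypothesis $p > 1/2$ enters, since it makes the exponent $1/(2p) < 1$. This is the main point of the argument: the product irreducibly loses half a derivative, matched against the semigroup's half-derivative gain, and integrability of $(\mu t)^{-1/(2p)}$ forces $p > 1/2$. The delicate step is therefore the sharpness of the product estimate, namely that no more than half a derivative is lost; any further loss would demand a larger $p$ and thereby conflict with the upper restriction $p \leqslant 1$ needed to preserve the energy estimates of the preceding sections.
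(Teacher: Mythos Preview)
Your proposal is correct and follows essentially the same strategy as the paper: the smoothing bound $\sup_{\xi}\langle\xi\rangle^{1/2}e^{-\mu t|\xi|^p}\lesssim 1+(\mu t)^{-1/(2p)}$ together with a half-derivative-losing product estimate, then integrate in $t$ using $p>1/2$. The only cosmetic difference is that the paper first disposes of the easy case $r=s>1/2$ directly via the algebra property of $H^s$, obtaining $C(T)=c_sT$ without invoking the smoothing at all, and then applies the smoothing argument only in the remaining cases $r=s=1/2$ and $r=s+1/2$; the product estimates there are handled by elementary means (H\"older and Sobolev embedding $H^{1/4}\hookrightarrow L^4$ for $s=1/2$, the algebra property for $s>1/2$) rather than the commutator machinery you allude to, so your references to Lemmas~\ref{Kato_Ponce_lemma}--\ref{Selberg_Tesfahun_lemma} and the case split from Lemma~\ref{energy_lemma} are heavier than necessary.
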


\begin{proof}
In the case $r = s > 1/2$ the statement is obvious
due to boundedness of $\exp( -\mu t |D|^p )$
and the algebraic property
\(
	\norm{fg}_{H^s} \lesssim
	\norm{f}_{H^s} \norm{g}_{H^s}
	.
\)
Hence $C(T) = c_s T$ with some constant $c_s$ depending only
on $s$.

Otherwise we use
\[
	\left \lVert
		e^{ -\mu t |D|^p }
		(fg)
	\right \rVert
	_{H^r}
	\leqslant
	\left \lVert
		\xi \mapsto e^{ -\mu t |\xi|^p }
		\langle \xi \rangle ^{1/2}
	\right \rVert
	_{L^{\infty}}
	\norm{fg}_{H^{r - 1/2}}
\]
where in the case $r = s = 1/2$
by the H\"older inequality we have
\[
	\norm{fg}_{H^{r - 1/2}}
	=
	\norm{fg}_{L^2}
	\leqslant
	\norm{f}_{L^4} \norm{g}_{L^4}
	\lesssim
	\norm{f}_{H^{1/4}} \norm{g}_{H^{1/4}}
	\lesssim
	\norm{f}_{H^s} \norm{g}_{H^s}
\]
and in the case $r = s + 1/2$ we obviously have
\[
	\norm{fg}_{H^{r - 1/2}}
	\lesssim
	\norm{f}_{H^r} \norm{g}_{H^s}
	.
\]
It is left to check that the $L^{\infty}$-norm above
is locally integrable.
Indeed, we can estimate the
function at $\xi \in [0, 1]$
and at $\xi \geqslant 1$ separately
\[
	e^{ -\mu t |\xi|^p }
	\langle \xi \rangle ^{1/2}
	\leqslant
	\max \left \{
		2^{1/4},
		\sup _{\xi \geqslant 1}
		2^{1/4} \xi^{\frac 1{2p}}
		e^{ -\mu t |\xi| }
	\right \}
	\leqslant
	2^{1/4} \max \left \{
		1,
		( 2p e \mu t )^{ -\frac 1{2p} }
	\right \}
\]
that is an integrable function with respect to time
over any bounded interval for $p > 1/2$.
The integral of this function over $[0, T]$ defines the bound $C(T)$.
\end{proof}

With Lemma \ref{regularised_estimate_lemma} in hand
we can prove the local well-posedness in
$H^{s + 1/2} (\mathbb R) \times H^s (\mathbb R)$
with $s \geqslant 1/2$ for System \eqref{reg_capillarity_sys}
by the fixed-point argument.
Diagonalization has the matrix form
\begin{equation}
\label{semigroup}
	\mathcal S(t) =
	\exp(-\varkappa \mu t |D|^p)
	\mathcal K
	\begin{pmatrix}
		\exp(-itK_{\varkappa}D) & 0
		\\
		0 & \exp(itK_{\varkappa}D)
	\end{pmatrix}
	\mathcal K^{-1}
	,
\end{equation}
where
\[
	\mathcal K
	=
	\frac 1{\sqrt 2}
	\begin{pmatrix}
		1 & 1
		\\
		K_{\varkappa} & -K_{\varkappa}
	\end{pmatrix}
	, \quad
	\mathcal K^{-1}
	=
	\frac 1{\sqrt 2}
	\begin{pmatrix}
		1 & K_{\varkappa}^{-1}
		\\
		1 & -K_{\varkappa}^{-1}
	\end{pmatrix}
\]
with $K_{\varkappa}$
defined by \eqref{K_kappa}.
For any fixed
\(
	u_0 = (\eta_0, v_0)^T \in X^s
	= H^{s + 1/2} (\mathbb R) \times H^s (\mathbb R)
\)
the function $\mathcal S(t)u_0$ solves the linear
initial-value problem associated with
\eqref{reg_capillarity_sys}.
Let $X^s_T = C([0, T]; X^s)$ and
regard a mapping $\mathcal A : X^s_T \to X^s_T$
defined by
\begin{equation}
\label{contraction_mapping}
	\mathcal A(\eta, v; u_0)(t) = \mathcal S(t)u_0
	+ \int_0^t \mathcal S(t - t') (-i\tanh D)
	\begin{pmatrix}
		\eta v
		\\
		v^2 / 2
	\end{pmatrix}
	(t')dt'
	.
\end{equation}
Then the Cauchy problem for System \eqref{reg_capillarity_sys}
with the initial data $u_0$ may be rewritten equivalently
as an equation in $X^s_T$ of the form
\begin{equation}
\label{u_is_Au}
	 u = \mathcal A(u; u_0)
\end{equation}
where $u = (\eta, v)^T \in X^s_T$.
\begin{lemma}
\label{regularised_local_existence_lemma}
	Let $s \geqslant 1/2$, $p > 1/2$,
	$\mu \in (0, 1)$ and
	$u_0 = (\eta_0, v_0)^T \in X^s$.
	Then there is
	$T = T( s, p, \varkappa, \mu, \lVert u_0 \rVert_{X^s} ) > 0$,
	decreasing to zero with increase of the norm of $u_0$,
	such that there exists a unique solution
	$u = (\eta, v)^T \in X^s_T$ of Problem \eqref{u_is_Au}.

	Moreover, for any $R > 0$ there exists
	a $T = T( s, p, \varkappa, \mu, R) > 0$
	such that the flow map associated with Equation \eqref{u_is_Au}
	is a real analytic mapping of the open ball
	$B_R(0) \subset X^s$ to $X^s_T$.
\end{lemma}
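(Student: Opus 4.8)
The plan is to produce the fixed point of the map $\mathcal{A}(\cdot; u_0)$ from \eqref{contraction_mapping} by the contraction mapping principle on a closed ball of $X^s_T$. Two facts drive the argument. First, the semigroup $\mathcal{S}(t)$ of \eqref{semigroup} is bounded on $X^s$ uniformly in $t \geqslant 0$ and in $\mu \in (0,1)$: the parabolic factor $\exp(-\varkappa\mu t |D|^p)$ is a scalar Fourier multiplier of operator norm at most one on every $H^r(\R)$, it commutes with the matrix conjugation, and the diagonal dispersive matrix acts by unimodular symbols; since $K_\varkappa \sim J^{1/2}$, the natural space for the diagonalised variable $\mathcal{K}^{-1}u$ is $H^{s+1/2}(\R) \times H^{s+1/2}(\R)$, so that $\mathcal{K}^{-1}$ maps $X^s$ boundedly into it, the phase matrix preserves it, and $\mathcal{K}$ maps it back into $X^s$. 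Second, Lemma \ref{regularised_estimate_lemma} is exactly what compensates the loss of half a derivative in the first component of the nonlinearity; this half-derivative loss is the main obstacle, and it is the reason the parabolic regularisation with $p > 1/2$ is introduced.

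For the self-mapping estimate I would split $\mathcal{A}(u;u_0) = \mathcal{S}(\cdot)u_0 + B(u,u)$, where $B$ denotes the Duhamel term. The linear part obeys $\norm{\mathcal{S}(\cdot)u_0}_{X^s_T} \leqslant C \norm{u_0}_{X^s}$. For $B$ I would commute the parabolic factor through the uniformly bounded dispersive conjugation, reducing matters to bounding $\int_0^t \fixednorm{ e^{-\varkappa\mu(t-t')|D|^p}(-i\tanh D)(\eta v, v^2/2)^T(t') }_{X^s}\, dt'$. The first component $\tanh D(\eta v)$ must be controlled in $H^{s+1/2}$, which is precisely the half-derivative loss; as $\tanh D$ is a bounded multiplier commuting with the smoothing, Lemma \ref{regularised_estimate_lemma} with $r = s+1/2$, $f = \eta$, $g = v$ applies, the time shift in $e^{-\varkappa\mu(t-t')|D|^p}$ being harmless since the proof of that lemma uses only the $L^\infty_\xi$ bound on $e^{-\varkappa\mu\tau|\xi|^p}\langle\xi\rangle^{1/2}$, and it yields the bound $C(T)\norm{\eta}_{C_TH^{s+1/2}}\norm{v}_{C_TH^s}$. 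The second component $\tanh D(v^2/2)$ is treated in $H^s$ by the same lemma with $r = s$. Altogether $\norm{\mathcal{A}(u;u_0)}_{X^s_T} \leqslant C\norm{u_0}_{X^s} + C C(T)\norm{u}_{X^s_T}^2$, so on the ball of radius $M = 2C\norm{u_0}_{X^s}$ the map is a self-map once $C C(T) M \leqslant 1/2$. The bilinearity of the nonlinearity gives in the same way $\norm{\mathcal{A}(u_1;u_0) - \mathcal{A}(u_2;u_0)}_{X^s_T} \leqslant 2C C(T) M \norm{u_1 - u_2}_{X^s_T}$, so shrinking $T$ until $2C C(T) M \leqslant 1/2$ makes $\mathcal{A}$ a contraction. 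Since $C(T)$ is non-decreasing and tends to zero as $T \to 0$, the resulting $T$ depends only on $s, p, \varkappa, \mu$ and $\norm{u_0}_{X^s}$ and decreases as the norm grows; the Banach fixed point theorem then delivers the unique solution of \eqref{u_is_Au} in the ball, with uniqueness throughout $X^s_T$ following from the standard difference estimate.

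For the analyticity assertion I would note that $(\eta v, v^2/2)^T$ is a bounded symmetric bilinear form in $u$, so $B(u,u)$ is a bounded bilinear map on $X^s_T$ of norm $\leqslant C C(T)$, and $\mathcal{A}(u;u_0)$ is affine in $u_0$ and quadratic in $u$. Hence $F(u,u_0) = u - \mathcal{S}(\cdot)u_0 - B(u,u)$ is an entire map $X^s_T \times X^s \to X^s_T$ with $D_uF(u,u_0) = \mathrm{Id} - 2B(u,\cdot)$. Fixing $R > 0$ and choosing $T = T(R)$ so small that $2C C(T)(2CR) \leqslant 1/2$ for every $u_0 \in B_R(0)$, the operator $2B(u,\cdot)$ has norm below one on the relevant ball, so $D_uF$ is boundedly invertible by a Neumann series; the analytic implicit function theorem then gives that the flow map $u_0 \mapsto u$ is real analytic from $B_R(0)$ into $X^s_T$.
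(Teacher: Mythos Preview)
Your argument is correct and follows essentially the same route as the paper: both apply the contraction mapping principle using the semigroup bound $\norm{\mathcal S(t)u}_{X^s}\lesssim\norm{e^{-\varkappa\mu t|D|^p}u}_{X^s}$ and then Lemma~\ref{regularised_estimate_lemma} to handle the half-derivative loss in the nonlinearity, and both invoke the implicit function theorem for the analyticity of the flow map. The only cosmetic difference is that the paper centres the contraction ball at $\mathcal S(\cdot)u_0$ with radius $M=\norm{u_0}_{X^s}$, whereas you centre at the origin with radius $2C\norm{u_0}_{X^s}$; this is immaterial.
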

\begin{proof}
We need to show that the restriction of $\mathcal A$
on some closed ball $B_M$
with the center at point $\mathcal S(t)u_0$
is a contraction mapping.
Note that
\(
	\norm{ \mathcal S(t)u } _{X^s}
	\lesssim
	\norm{ \exp(- \varkappa \mu t |D|^p) u } _{X^s}
	.
\)
Hence by Lemma \ref{regularised_estimate_lemma}
for any $T, M > 0$ and $u, u_1, u_2 \in B_M \subset X^s_T$ hold
\[
	\lVert \mathcal A(u) - \mathcal S(t)u_0 \rVert _{X^s_T}
	\leqslant
	C(T) \lVert u \rVert _{X^s_T}^2
	\leqslant
	C(T) ( M + \lVert u_0 \rVert _{X^s} )^2
	,
\]
\[
	\lVert \mathcal A(u_1) - \mathcal A(u_2) \rVert _{X^s_T}
	\leqslant
	C(T) \lVert u_1 - u_2 \rVert _{X^s_T}
	(
		\lVert u_1 \rVert _{X^s_T}
		+
		\lVert u_2 \rVert _{X^s_T}
	)
	\leqslant
	2 C(T) ( M + \lVert u_0 \rVert _{X^s} )
	\lVert u_1 - u_2 \rVert _{X^s_T}
	,
\]
and so taking $M = \lVert u_0 \rVert _{X^s}$
one can find a $T > 0$ such that $\mathcal A$ will be a contraction in
the closed ball $B_M$.
The first statement of the lemma follows from the
contraction mapping principle.
Smoothness of the flow map can be proved in the same spirit
applying the implicit function theorem instead,
and so we omit it.
Some details can be found in \cite{Dinvay_Tesfahun}.
\end{proof}

By a standard argumentation, see for example \cite{Iorio_Iorio},
one can show that if
$u = (\eta, v)^T \in X^s_T$ is the solution of Problem \eqref{u_is_Au}
then
\(
	u \in C^1 \left( (0, T);
	H^{s - 1}(\mathbb{R}) \times H^{s - 3/2}(\mathbb{R}) \right)
\)
and it solves the regularised system \eqref{reg_capillarity_sys}
as well with the initial data $u_0 \in X^s$.
Clearly, in order to be able to use the following energy
and a priori estimates,
one has to pick up a smooth initial data.
The justification is discussed briefly in
Section \ref{Proof_of_Theorem}.

\begin{lemma}
\label{regularised_energy_lemma}
	Suppose $s \geqslant 1/2$.
	Then there exists $C_s > 0$ such that
	for any $\varkappa > 0$ and
	any functions	
	\(
		\eta, v \in C^1 \left( (0, T);
		H^{\infty}(\mathbb{R}) \right)
	\)
	solving System \eqref{reg_capillarity_sys}
	with $\mu \in (0, 1)$ and $p \in (1/2, 1]$
	it holds that
	\[
		\frac{d}{dt}E^s (\eta, v)
		\leqslant
		C_s (1 + \varkappa)
		\left(
			\norm{ \eta, v } _{ H_{\varkappa}^{s + 1/2} \times H^s }^2
			+
			\norm{ \eta, v } _{ H_{\varkappa}^{s + 1/2} \times H^s }^4
		\right).
	\]
\end{lemma}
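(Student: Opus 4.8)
The plan is to piggy-back on Lemma \ref{energy_lemma}. System \eqref{reg_capillarity_sys} is obtained from \eqref{capillarity_sys} by adding the two dissipative terms $-\varkappa\mu|D|^p\eta$ and $-\varkappa\mu|D|^pv$ to the right-hand sides. Hence, when I differentiate the modified energy $E^s$ of \eqref{energy} along a solution of \eqref{reg_capillarity_sys}, the chain rule reproduces verbatim the quantity $I_1+\dots+I_8$ already estimated in the proof of Lemma \ref{energy_lemma} — whose bound is exactly the right-hand side claimed here — together with a handful of extra terms arising from substituting $-\varkappa\mu|D|^p\eta$ for $\eta_t$ and $-\varkappa\mu|D|^pv$ for $v_t$ in the places where the equations were used. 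It then remains only to control these parabolic contributions, and the whole point of the hypothesis $p\leqslant 1$ is to make this possible.

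I would split the extra terms according to whether they come from the quadratic part $\tfrac12\norm{\eta,v}_{H_\varkappa^{s+1/2}\times H^s}^2$ or from the cubic modifier $\tfrac12\int\eta(J^{s-1/2}v)^2$ of \eqref{energy}. The quadratic part produces only
\[
	-\varkappa^2\mu\bignorm{|D|^{p/2}J^{s-1/2}\partial_x\eta}_{L^2}^2
	-\varkappa\mu\bignorm{|D|^{p/2}J^{s-1/2}\eta}_{L^2}^2
	-\varkappa\mu\bignorm{|D|^{p/2}J^{s-1/2}K^{-1}v}_{L^2}^2,
\]
because $|D|^p$ is self-adjoint with non-negative symbol and commutes with $J$, $\partial_x$ and $K^{-1}$. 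All three are $\leqslant 0$, so they may simply be discarded: this is the only way the regularisation touches the leading-order energy, and it touches it favourably.

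The genuine work is with the cubic modifier. Writing $V=J^{s-1/2}v$, the two surviving terms are
\[
	A=-\frac{\varkappa\mu}{2}\int(|D|^p\eta)\,V^2
	\qquad\text{and}\qquad
	B=-\varkappa\mu\int\eta\,V\,|D|^pV.
\]
For $A$ I would use $\bignorm{|D|^p\eta}_{L^2}\lesssim\norm{\eta}_{H^1}$ (valid since $p\leqslant1$) together with $\bignorm{V^2}_{L^2}=\norm{V}_{L^4}^2\lesssim\norm{v}_{H^s}^2$ by Sobolev embedding. For $B$ I would symmetrise through the fractional Leibniz rule of Lemma \ref{LeibnizRule_lemma}: writing $V|D|^pV=\tfrac12|D|^p(V^2)-\tfrac12R$ with a remainder $R$ whose $L^2$-norm is bounded by $\norm{v}_{H^s}^2$ — here $p\leqslant1$ guarantees that both halves of the Leibniz split cost at most $|D|^{1/2}$, which stays inside the $H^s$ budget — and then moving $|D|^p$ onto $\eta$ by self-adjointness, so that $B$ reduces to a term of the same shape as $A$ plus $\tfrac12\varkappa\mu\int\eta R$, the latter controlled by $\norm{\eta}_{L^2}\norm{v}_{H^s}^2$. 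The endpoint $p=1$ is dispatched identically, using the first-order commutator bound of Lemma \ref{Kato_Ponce_lemma} in place of Lemma \ref{LeibnizRule_lemma}. Either way one reaches $A+B\lesssim\varkappa\mu\,\norm{\eta}_{H^1}\,\norm{v}_{H^s}^2$.

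It only remains to absorb the prefactor. Since $\norm{\eta}_{H^1}\leqslant\norm{\eta}_{H^{s+1/2}}\lesssim(1+\varkappa^{-1/2})\,\norm{\eta,v}_{H_\varkappa^{s+1/2}\times H^s}$ for $s\geqslant1/2$ and $\mu<1$, I obtain $\varkappa\mu\,\norm{\eta}_{H^1}\lesssim(\varkappa+\sqrt\varkappa)\,\norm{\eta,v}_{H_\varkappa^{s+1/2}\times H^s}\lesssim(1+\varkappa)\,\norm{\eta,v}_{H_\varkappa^{s+1/2}\times H^s}$, whence $A+B\lesssim(1+\varkappa)\norm{\eta,v}_{H_\varkappa^{s+1/2}\times H^s}^3$. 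Bounding the cube by $\tfrac12$ times the sum of the square and the fourth power, and adding the estimate supplied by Lemma \ref{energy_lemma}, yields the assertion with the same constant up to an absolute factor. The main obstacle is exactly the pair $A,B$: the dissipative terms come for free, and the only substantive point is verifying that the half-derivative loss induced by $|D|^p$ is covered by the modifier, which is precisely what forces $p$ not to exceed one.
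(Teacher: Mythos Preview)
Your overall strategy matches the paper's proof exactly: differentiate $E^s$, recover the terms $I_1,\dots,I_8$ from Lemma~\ref{energy_lemma}, observe that the quadratic-norm contributions of the parabolic terms are non-positive (your first display; the paper's $\widetilde I_1$), and then control the two cubic contributions $A$ and $B$ (the paper's $\widetilde I_2$). Your treatment of $A$ is fine.

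There is, however, a genuine gap in your handling of $B$. The claim that the Leibniz remainder $R=|D|^p(V^2)-2V|D|^pV$ satisfies $\|R\|_{L^2}\lesssim\|v\|_{H^s}^2$ fails precisely in the range $p>1/2$ under consideration. With $V=J^{s-1/2}v\in H^{1/2}$, the best Lemma~\ref{LeibnizRule_lemma} gives (say with $\sigma_1=\sigma_2=p/2$ and $p_1=p_2=4$, then Sobolev $\dot H^{1/4}\hookrightarrow L^4$) is
\[
\|R\|_{L^2}\lesssim\bigl\||D|^{p/2}V\bigr\|_{L^4}^2\lesssim\|v\|_{H^{s-1/4+p/2}}^2,
\]
which exceeds the $H^s$ budget exactly when $p>1/2$. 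Your heuristic that ``both halves cost at most $|D|^{1/2}$'' overlooks the extra quarter-derivative paid when passing from $L^2$ to $L^4$; no other admissible choice of $(\sigma_i,p_i)$ in Lemma~\ref{LeibnizRule_lemma} avoids this loss, since $\sigma_1+\sigma_2=p>1/2=1/p_1+1/p_2$.

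The fix is immediate and is what the paper does (it records only the outcome $\widetilde I_2\lesssim\varkappa\|\eta\|_{H^{s+1/2}}\|v\|_{H^s}^2$): bound $B$ directly by the trilinear estimate of Lemma~\ref{Selberg_Tesfahun_lemma} with $f=\eta$, $g=J^{s-1/2}v$, $h=J^{s-1/2}|D|^pv$ and exponents $a=s+1/2$, $b=1/2$, $c=1/2-p$. The hypothesis $b+c\geqslant0$ is precisely $p\leqslant1$, and one obtains $|B|\lesssim\varkappa\|\eta\|_{H^{s+1/2}}\|v\|_{H^s}^2$. With this correction the remainder of your argument goes through and coincides with the paper's proof.
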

In other words, the parabolic regularisation
\eqref{reg_capillarity_sys} does not spoil
the energy estimate.
%obtained in Lemma \ref{energy_lemma}.
Note that $C_s > 0$ does not depend on
$\varkappa, \mu, p$.
\begin{proof}
Following the proof of Lemma \ref{energy_lemma}
one arrives at
\begin{equation}
\label{dE}
	\frac{d}{dt}E^s (\eta, v)
	=
	\widetilde I_1 + \widetilde I_2
	+ I_1 + \ldots + I_8
	,
\end{equation}
where
\[
	\widetilde I_1 =
	- \varkappa^2 \mu
	\left \lVert
		\partial_x |D|^{p/2} \eta
	\right \rVert
	_{H^{s - 1/2}}^2
	- \varkappa \mu
	\left \lVert
		|D|^{p/2} \eta
	\right \rVert
	_{H^{s - 1/2}}^2
	- \varkappa \mu
	\left \lVert
		K^{-1} |D|^{p/2} v
	\right \rVert
	_{H^{s - 1/2}}^2
	\leqslant 0
	,
\]
\[
	\widetilde I_2 =
	- \frac{\varkappa \mu}2
	\int \left( J^{s - 1/2} v \right) ^2
	|D|^p \eta
	- \varkappa \mu
	\int \eta \left( J^{s - 1/2} v \right)
	J^{s - 1/2} |D|^p v
	\lesssim
	\varkappa
	\lVert \eta \rVert _{H^{s + 1/2}}
	\lVert v \rVert _{H^s}^2
\]
for $p \leqslant 1$
and the rest integrals $I_1, \ldots, I_8$
are the same as in Lemma \ref{energy_lemma}.
\end{proof}

As was noticed at the end of Section \ref{Modified_energy},
one has to make sure that the modified energy is coercive.
An effective way to do it at the low level of regularity
is to control $\norm{\eta} _{L^{\infty}}$
via the energy conservation.
One can get the same controllability for the regularised
problem via the energy dissipation due to the following result.

\begin{lemma}
\label{energy_dissipation_lemma}
	Suppose
	\(
		\eta, v \in C^1 \left( (0, T);
		H^{\infty}(\mathbb{R}) \right)
	\)
	solve
	System \eqref{reg_capillarity_sys}
	with $\varkappa > 0$, $\mu \in (0, 1)$ and $p \in (1/2, 1]$.
	Then there exists $\delta (p) > 0$ independent of
	the viscosity $\mu$ and the capillarity $\varkappa$ such that
	$\mathcal H (\eta, v)$ is a non-increasing
	function of time $t$ provided
	\(
		\norm{\eta(t)} _{L^2} + \norm{v(t)} _{H^{1/2}}
		\leqslant \delta
	\)
	holds for any moment $t$.
\end{lemma}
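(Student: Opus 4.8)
The plan is to differentiate $\mathcal{H}$ along the regularised flow and exploit the Hamiltonian structure. Writing $u = (\eta, v)^T$, System \eqref{reg_capillarity_sys} reads $\partial_t u = \mathcal{J}\nabla\mathcal{H}(u) - \varkappa\mu|D|^p u$, so that, with the $L^2 \times L^2$ pairing,
\[
	\frac{d}{dt}\mathcal{H}(u)
	= \innerprod{\nabla\mathcal{H}(u)}{\mathcal{J}\nabla\mathcal{H}(u)}
	- \varkappa\mu\innerprod{\nabla\mathcal{H}(u)}{|D|^p u}.
\]
The first term vanishes because $\mathcal{J}$ is skew-adjoint. A direct computation gives $\nabla\mathcal{H} = \big( (1+\varkappa D^2)\eta + \tfrac12 v^2,\, K^{-2}v + \eta v \big)^T$, and pairing against $|D|^p u$ splits the remaining term as $-\varkappa\mu\left( G + \tfrac12 B_1 + B_2 \right)$, where $G = \int (1+\varkappa D^2)\eta\,|D|^p\eta + \int K^{-2}v\,|D|^p v$ is quadratic while $B_1 = \int v^2|D|^p\eta$ and $B_2 = \int \eta v\,|D|^p v$ are cubic. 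It therefore suffices to prove $\abs{\tfrac12 B_1 + B_2} \leqslant G$.

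The second step is to record the coercivity of $G$. Since the Fourier symbols $(1+\varkappa\xi^2)|\xi|^p$ and $\tfrac{\xi}{\tanh\xi}|\xi|^p$ are nonnegative, $G \geqslant 0$; dropping the capillary part and using $\tfrac{\xi}{\tanh\xi} \sim \angles{\xi}$ one obtains, with constants independent of $\varkappa$ and $\mu$,
\[
	G \gtrsim \norm{|D|^{p/2}\eta}_{L^2}^2 + \norm{|D|^{p/2}v}_{H^{1/2}}^2 =: Y_\eta^2 + Y_v^2.
\]
The decisive structural gain is the extra half-derivative enjoyed by the velocity, coming from $K^{-2} \sim \angles{D}$; for $\eta$ only the homogeneous $\dot H^{p/2}$-smoothing is available, supplemented by the smallness of $\norm{\eta}_{L^2}$.

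Next I would control the cubic terms by $\delta(Y_\eta^2 + Y_v^2)$. Writing $|D|^p = |D|^{p/2}|D|^{p/2}$ and integrating by parts, $B_1 = \int |D|^{p/2}(v^2)\,|D|^{p/2}\eta$ and $B_2 = \int |D|^{p/2}(\eta v)\,|D|^{p/2}v$, so that one factor is always $|D|^{p/2}\eta$ or $|D|^{p/2}v$, controlled by $Y_\eta$ or $Y_v$. Expanding the remaining product by the fractional Leibniz rule (Lemma \ref{LeibnizRule_lemma}) and estimating the paraproduct pieces by Lemma \ref{Selberg_Tesfahun_lemma}, I would arrange each trilinear contribution so that exactly one factor carries the small norm $\norm{\eta}_{L^2} + \norm{v}_{H^{1/2}} \leqslant \delta$ while the other two carry $Y_\eta, Y_v$. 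For the commutator remainders I would distribute the $p/2$ derivatives \emph{asymmetrically}, placing almost all of them on one factor (kept in a homogeneous norm bounded by $Y_v$) and a vanishing amount on the other (kept in $\dot H^{1/2}$, hence $\lesssim \delta$). This gives $\abs{B_1} + \abs{B_2} \lesssim \delta\,Y_\eta Y_v + \delta\,Y_v^2 \lesssim \delta(Y_\eta^2 + Y_v^2) \lesssim \delta\,G$, whence $\frac{d}{dt}\mathcal{H} \leqslant -\varkappa\mu(1 - C_p\delta)G \leqslant 0$ once $\delta \leqslant \delta(p) := (2C_p)^{-1}$. As $C_p$ arises only from the Sobolev, Leibniz and Selberg--Tesfahun constants, $\delta$ depends on $p$ alone, not on $\varkappa$ or $\mu$.

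The main obstacle is precisely this cubic estimate, and its subtlety is that the bound must be \emph{strictly} of the form $\delta\cdot(\text{quadratic in }Y)$, with no additive constant: otherwise monotonicity could fail at times when $G$ itself is small, e.g.\ for very low-frequency data. This is exactly what forbids the naive symmetric splitting of the commutators, which would produce a $\delta^2 Y_\eta$ term and, after Young's inequality, an inadmissible $O(\delta^3)$ remainder; it likewise rules out any estimate routed through $\norm{v}_{L^\infty}$, since $H^{1/2}$ does not embed into $L^\infty$. The half-derivative smoothing built into $Y_v$ together with the use of Lemma \ref{Selberg_Tesfahun_lemma} in place of Hölder's inequality are what make the construction close uniformly in $\varkappa, \mu$ and, in particular, survive the endpoint $p = 1$.
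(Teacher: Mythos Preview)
Your proposal is correct and follows essentially the same route as the paper: differentiate $\mathcal H$ along the flow, use skew-adjointness of $\mathcal J$ to kill the Hamiltonian part, isolate the coercive quadratic piece $G \gtrsim \|\,|D|^{p/2}\eta\|_{L^2}^2 + \|\,|D|^{p/2}v\|_{H^{1/2}}^2$, and absorb the two cubic integrals into $\delta\,G$ via the fractional Leibniz rule together with Lemma~\ref{Selberg_Tesfahun_lemma}. The only cosmetic difference is that the paper handles the Leibniz remainder with the concrete split $\sigma_1 = p/2 - 1/4$, $\sigma_2 = 1/4$ and the embedding $\dot H^{1/4}\hookrightarrow L^4$, whereas you describe an asymmetric (near-endpoint) split; both land on the same bound $|B_1|+|B_2|\lesssim \delta\,Y_\eta Y_v + \delta\,Y_v^2$.
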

\begin{proof}
Hamiltonian \eqref{Hamiltonian} has the derivative
\[
	\frac 1{\varkappa \mu} \frac{d}{dt} \mathcal H (\eta, v)
	=
	- \norm{ \eta } _{\dot{H}^{p/2}}^2
	- \varkappa \norm{ \eta } _{\dot{H}^{p/2 + 1}}^2
	- \norm{ K^{-1} v } _{\dot{H}^{p/2}}^2
	- I_1
	- I_2
	,
\]
where the rest integrals
\[
	I_1 = \int \eta v |D|^pv
	, \quad
	I_2 = \frac 12 \int v^2 |D|^p\eta
\]
are of no definite sign.
One has to check that $I_1$, $I_2$ are absorbed
by the first and third norms.

Firstly, we rewrite $I_1$ in the form
\begin{multline*}
	I_1
	=
	\int |D|^{p/2}(\eta v) |D|^{p/2}v
	=
	\int
	\left(
		|D|^{p/2}(\eta v) - v |D|^{p/2}\eta - \eta |D|^{p/2}v
	\right)
	|D|^{p/2}v
	\\
	+
	\int \left( |D|^{p/2}\eta \right) v |D|^{p/2}v
	+
	\int \eta \left( |D|^{p/2}v \right)^2
	.
\end{multline*}
Applying the H\"older inequality and the fractional Leibniz rule
\eqref{LeibnizRule}
for $|D|^{p/2}$ with $L^2$-norm to the first integral,
Lemma \ref{Selberg_Tesfahun_lemma} to the second integral
and the H\"older inequality to the third integral,
one obtains
\begin{multline*}
	|I_1|
	\lesssim
	\left \lVert
		|D|^{p/2 - 1/4}\eta
	\right \rVert _{L^4}
	\left \lVert
		|D|^{1/4}v
	\right \rVert _{L^4}
	\left \lVert
		|D|^{p/2}v
	\right \rVert _{L^2}
	\\
	+
	\left \lVert
		|D|^{p/2}\eta
	\right \rVert _{L^2}
	\norm{v} _{H^{1/2}}
	\left \lVert
		|D|^{p/2}v
	\right \rVert _{H^{1/2}}
	+
	\norm{ \eta } _{L^2}
	\left \lVert
		|D|^{p/2}v
	\right \rVert _{L^4}^2
	.
\end{multline*}
Using the Sobolev embedding
\(
	\dot H^{1/4} \hookrightarrow L^4
	,
\)
one finally obtains
\[
	|I_1|
	\lesssim
	\left \lVert
		|D|^{p/2}\eta
	\right \rVert _{L^2}
	\norm{v} _{H^{1/2}}
	\left \lVert
		|D|^{p/2}v
	\right \rVert _{H^{1/2}}
	+
	\norm{ \eta } _{L^2}
	\left \lVert
		|D|^{p/2}v
	\right \rVert _{H^{1/2}}^2
	.
\]

The second integral $I_2$ can be treated by the H\"older inequality
as follows
\[
	|I_2| = \frac 12
	\int \left( |D|^{p/2}v^2 \right) |D|^{p/2}\eta
	\leqslant
	\frac 12
	\norm{ |D|^{p/2}v^2 } _{L^2}
	\norm{ |D|^{p/2}\eta } _{L^2}
	.
\]
Here the first norm is estimated with the help
of the Leibniz rule in the way
\[
	\norm{ |D|^{p/2}v^2 } _{L^2}
	\lesssim
	\norm{ v|D|^{p/2}v } _{L^2}
	+
	\norm{ |D|^{p/2}v } _{L^4}^2
	\lesssim
	\norm{ v } _{H^{1/2}} \norm{ |D|^{p/2}v } _{H^{1/2}}
	+
	\norm{ |D|^{p/2 + 1/4}v } _{L^2}^2
	,
\]
where we have used Lemma \ref{Selberg_Tesfahun_lemma}
and the embedding 
\(
	\dot H^{1/4} \hookrightarrow L^4
	.
\)
Thus
\[
	|I_2|
	\lesssim
	\norm{ v } _{H^{1/2}} \norm{ |D|^{p/2}v } _{H^{1/2}}
	\norm{ |D|^{p/2}\eta } _{L^2}
	.
\]
Eventually we obtain
\[
	|I_1| + |I_2|
	\lesssim
	\left(
		\norm{ \eta } _{\dot{H}^{p/2}}^2
		+ \norm{ K^{-1} v } _{\dot{H}^{p/2}}^2
	\right)
	\max \left\{ \norm{\eta} _{L^2}, \norm{v} _{H^{1/2}} \right\}
	,
\]
that concludes the proof.
Note that the implicit constant here does not depend on $\varkappa$.

\end{proof}

As a simple corollary with the proof similar to that of
Lemma \ref{energy_bound_lemma} one obtains the following.

\begin{corollary}
\label{reg_energy_bound_corollary}
	There exists a constant $\delta > 0$, depending only
	on the parabolic regularization power $p$,
	such that
	if a pair
	\(
		u = ( \eta, v ) \in C^1 \left( (0, T);
		( H^{\infty}(\mathbb{R}) )^2 \right)
	\),
	having initial condition
	\(
		\lVert u_0 \rVert _{H_{\varkappa}^1 \times H^{1/2}}
		\leqslant \delta / 2
	\),
	solves System \eqref{reg_capillarity_sys}
	then
	\(
		\lVert u(t) \rVert _{H_{\varkappa}^1 \times H^{1/2}}
		\leqslant \delta
	\)
	for any time $t$.
\end{corollary}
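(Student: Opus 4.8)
The plan is to repeat verbatim the continuity (bootstrap) argument of Lemma \ref{energy_bound_lemma}, replacing the \emph{conservation} of the Hamiltonian by the \emph{dissipation} supplied by Lemma \ref{energy_dissipation_lemma}; since dissipation only helps, the scheme goes through once the two competing smallness thresholds are reconciled. Writing $\lVert u \rVert^2 = \tfrac12 \lVert u \rVert_{H_\varkappa^1 \times H^{1/2}}^2$ and using $\bigl| \int \eta v^2 \bigr| \leqslant \lVert \eta \rVert_{L^2} \lVert v \rVert_{L^4}^2 \leqslant C \lVert u \rVert^3$ exactly as before, one obtains the two-sided comparison
\[
	\lVert u \rVert^2 (1 - C \lVert u \rVert)
	\leqslant
	\mathcal H(u)
	\leqslant
	\lVert u \rVert^2 (1 + C \lVert u \rVert)
\]
with $C$ independent of $\varkappa$, $\mu$ and $p$. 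I would also note that the quantity $\lVert \eta \rVert_{L^2} + \lVert v \rVert_{H^{1/2}}$ controlling the hypothesis of Lemma \ref{energy_dissipation_lemma} is dominated by the energy norm, $\lVert \eta \rVert_{L^2} + \lVert v \rVert_{H^{1/2}} \leqslant C' \lVert u \rVert_{H_\varkappa^1 \times H^{1/2}}$, with a $\varkappa$-independent constant $C'$ (drop the term $\varkappa \lVert \partial_x \eta \rVert_{L^2}^2$ and use $\lVert K^{-1} v \rVert_{L^2} \sim \lVert v \rVert_{H^{1/2}}$ from \eqref{capillarity_norm}).

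With $\delta(p)$ the threshold from Lemma \ref{energy_dissipation_lemma}, I would then fix
\[
	\delta = \min \left\{ (2C)^{-1}, \ \delta(p) / C' \right\}
	,
\]
which depends only on $p$ and on the universal constants $C$, $C'$, and in particular not on $\varkappa$ or $\mu$. This choice serves two purposes: the bound $C \lVert u \rVert \leqslant \tfrac12$ keeps the factor $1 - C \lVert u \rVert$ above $\tfrac12$, while the inequality $\lVert u \rVert_{H_\varkappa^1 \times H^{1/2}} \leqslant \delta$ forces $\lVert \eta \rVert_{L^2} + \lVert v \rVert_{H^{1/2}} \leqslant \delta(p)$, so that Lemma \ref{energy_dissipation_lemma} guarantees $\mathcal H$ to be non-increasing.

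The continuity argument then closes as in Lemma \ref{energy_bound_lemma}. Assuming $\lVert u_0 \rVert_{H_\varkappa^1 \times H^{1/2}} \leqslant \delta / 2$, let $[0, T^*)$ be the maximal interval on which $\lVert u(t) \rVert_{H_\varkappa^1 \times H^{1/2}} \leqslant \delta$. On this interval the dissipation hypothesis holds, hence $\mathcal H(u(t)) \leqslant \mathcal H(u_0)$, and the two-sided comparison gives
\[
	\tfrac12 \lVert u(t) \rVert^2
	\leqslant
	\mathcal H(u(t))
	\leqslant
	\mathcal H(u_0)
	\leqslant
	\lVert u_0 \rVert^2 (1 + C \lVert u_0 \rVert)
	,
\]
so that $\lVert u(t) \rVert < \delta / \sqrt 2$, i.e. $\lVert u(t) \rVert_{H_\varkappa^1 \times H^{1/2}} < \delta$ strictly; the elementary arithmetic is identical to that in Lemma \ref{energy_bound_lemma}, the factor $1/2$ in the initial smallness supplying the strict margin. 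Since the level $\delta$ is never attained, $T^*$ cannot be finite, and the claim follows.

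The only genuinely new point, and the main obstacle, is the apparent circularity between the two smallness conditions: Lemma \ref{energy_dissipation_lemma} delivers monotonicity of $\mathcal H$ only under an a priori bound on $\lVert \eta \rVert_{L^2} + \lVert v \rVert_{H^{1/2}}$, which is exactly the quantity one is trying to propagate. The resolution is precisely the choice $\delta = \min\{(2C)^{-1}, \delta(p)/C'\}$, which nests the region controlled by the continuity argument strictly inside the dissipation region, so no gap can open during the bootstrap. One must also verify that every constant entering $\delta$ (namely $C$, $C'$, and the embedding $\dot H^{1/4} \hookrightarrow L^4$ used inside Lemma \ref{energy_dissipation_lemma}) is independent of $\varkappa$ and $\mu$, which is what makes $\delta$ depend on $p$ alone, as asserted.
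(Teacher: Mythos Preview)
Your proof is correct and follows exactly the approach the paper intends: the paper states only that the corollary has ``proof similar to that of Lemma \ref{energy_bound_lemma}'', and you have supplied precisely that argument, replacing Hamiltonian conservation by the dissipation of Lemma \ref{energy_dissipation_lemma} and taking $\delta$ small enough to sit inside both the coercivity window $(2C)^{-1}$ and the dissipation window $\delta(p)/C'$. Your explicit discussion of how the bootstrap region is nested inside the dissipation region is the only point the paper leaves implicit.
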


The dependence of $\delta$ on the parabolic regularisation power $p$
is unimportant since below we stick only
to the case $p = 1$.
%
%
%%%%%%%%%%%%%%%%%%%%%%%%%%%%%%%%%%%%%%%%%%%%%%%%%%%%%%%%%%%%%%%%%%%%%%%%%%%%%
\section{A priori estimate}
\label{A_priori_estimate}
%%%%%%%%%%%%%%%%%%%%%%%%%%%%%%%%%%%%%%%%%%%%%%%%%%%%%%%%%%%%%%%%%%%%%%%%%%%%%
\setcounter{equation}{0}
%%%%%%%%%%%%%%%%%%%%%%%%%%%%%%%%%%%%%%%%%%%%%%%%%%%%%%%%%%%%%%%%%%%%%%%%%%%%%
%
%
We have an a priori global bound for solutions of both systems
\eqref{capillarity_sys} and \eqref{reg_capillarity_sys}
in $H^1(\mathbb R) \times H^{1/2}(\mathbb R)$
due to Lemma \ref{energy_bound_lemma}	
and Corollary \ref{reg_energy_bound_corollary}, respectively.
Our aim is it to obtain estimates in
$H^{s + 1/2}(\mathbb R) \times H^s(\mathbb R)$
with $s > 1/2$.

\begin{lemma}
[A priori estimate]
\label{a_priori_estimate_lemma}
	Suppose $s > 1/2$ and $\varkappa > 0$.
	Let
	\[
		(\eta, v) \in C
		\left(
			[0, T^{\star});
			H^{s + 1/2}(\mathbb R) \times H^s(\mathbb R)
		\right)
		\cap C^1 \left( (0, T^{\star});
		( H^{\infty}(\mathbb{R}) )^2 \right)
	\]
	be a solution of System \eqref{capillarity_sys}
	(or of the regularised system \eqref{reg_capillarity_sys}
	with $\mu \in (0, 1)$ and $p=1$)
	defined on its maximal time interval of existence and
	satisfying the blow-up alternative
	\begin{equation}
	\label{blow_up_alternative}
		T^{\star} < +\infty
		\mbox{ implies }
		\lim _{t \to T^{\star}}
		\norm{\eta(t), v(t)}
		_{H_{\varkappa}^{s + 1/2} \times H^s} = + \infty
		.
	\end{equation}
	Suppose that its initial data \eqref{capillarity_data}
	either satisfies the non-cavitation condition
	for $s > 3/2$ or has small enough
	$H_{\varkappa}^1 \times H^{1/2}$-norm
	for $s \leqslant 3/2$.
	Then there exists
	\(
		T_0
%		= T_0
%		\left(
%			\norm{\eta_0, v_0} _{H_{\varkappa}^{s + 1/2} \times H^s},
%			\varkappa
%		\right)
		< T^{\star}
	\)
	such that
	\begin{equation}
	\label{a_priori_estimate}
		\sup _{ t \in [0, T_0] }
		\norm{\eta(t), v(t)} _{H_{\varkappa}^{s + 1/2} \times H^s}
		\leqslant
		C \norm{\eta_0, v_0} _{H_{\varkappa}^{s + 1/2} \times H^s}
	\end{equation}
	for some $C > 0$ independent of $\varkappa, \mu$.
	The time of existence $T_0$ is a non-increasing
	function of the surface tension $\varkappa$ and
	of the initial data norm
	\(
		\norm{ \eta_0, v_0 } _{ H_{\varkappa}^{s + 1/2} \times H^s }
		.
	\)
\end{lemma}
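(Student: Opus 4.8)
The plan is to run the energy method built around the modified energy $E^s$, turning the differential inequality of Lemma~\ref{energy_lemma} (for System~\eqref{capillarity_sys}) and of Lemma~\ref{regularised_energy_lemma} (for the regularised system~\eqref{reg_capillarity_sys} with $p=1$) into a Gronwall-type bound, and closing everything by a continuity argument that keeps the non-cavitation condition alive for a definite time. Both lemmas furnish one and the same estimate
\[
	\frac{d}{dt} E^s(\eta, v)
	\lesssim
	(1 + \varkappa)
	\left(
		\norm{\eta, v}_{H_\varkappa^{s+1/2}\times H^s}^2
		+ \norm{\eta, v}_{H_\varkappa^{s+1/2}\times H^s}^4
	\right),
\]
with implicit constant depending only on $s$, hence independent of $\varkappa$ and $\mu$; this $\mu$-independence is what makes the argument uniform over the two systems. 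Since the cubic modifier in \eqref{energy} obeys $\bigabs{\int \eta (J^{s-1/2}v)^2} \lesssim \norm{\eta,v}_{H_\varkappa^{s+1/2}\times H^s}^3$, the initial value $E^s(\eta_0,v_0)$ is bounded above by a function of $\norm{\eta_0,v_0}_{H_\varkappa^{s+1/2}\times H^s}$ that increases in that norm and is independent of $\varkappa,\mu$; this upper bound is what will enter the time of existence.

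The first step is to secure the non-cavitation condition on a nontrivial interval, since that is exactly what the coercivity lemma above needs in order to give $E^s \sim \norm{\eta,v}_{H_\varkappa^{s+1/2}\times H^s}^2$ with equivalence constants independent of $\varkappa$ and $\mu$ (here one uses that $K^{-1}$ is comparable to $J^{1/2}$ uniformly in $\varkappa$). In the low regularity range $1/2 < s \leqslant 3/2$ this is immediate: smallness of $\norm{u_0}_{H_\varkappa^1\times H^{1/2}}$ propagates for all time by Lemma~\ref{energy_bound_lemma} for \eqref{capillarity_sys} and by Corollary~\ref{reg_energy_bound_corollary} for \eqref{reg_capillarity_sys}, and, as noted after Lemma~\ref{energy_bound_lemma}, the resulting control of $\norm{\eta(t)}_{L^\infty}$ guarantees non-cavitation on the whole maximal interval. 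In the high regularity range $s>3/2$ non-cavitation holds only at $t=0$ and must be propagated by hand; there I would differentiate $\norm{\eta}_{L^\infty}$ along the first equation, using $H^{s-1}\hookrightarrow L^\infty$ for $\partial_x v$, and the algebra property of $H^{s-1/2}$ (valid since $s-1/2>1$) together with $H^{s-1/2}\hookrightarrow L^\infty$ for $\tanh D(\eta v)$, to get $\norm{\partial_t\eta}_{L^\infty}\lesssim \norm{\eta,v}(1+\norm{\eta,v})$; the extra regularising term $-\varkappa\mu\abs{D}^p\eta$ is controlled by $\varkappa\norm{\eta}_{H^{s+1/2}}$ since $p=1<s$.

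The core is then a bootstrap on $[0,T^\star)$. I would fix relaxed non-cavitation margins (halving the lower gap of $\eta_0+1$ and doubling the upper bound) and a constant $C^\star$ strictly larger than the $C$ eventually claimed in \eqref{a_priori_estimate}, and let $T_0$ be the supremum of times $T<T^\star$ on which both $\sup_{[0,T]}\norm{\eta,v}_{H_\varkappa^{s+1/2}\times H^s}\leqslant C^\star\norm{\eta_0,v_0}_{H_\varkappa^{s+1/2}\times H^s}$ and $\eta$ keeps the relaxed margins. On this set coercivity applies, so Corollary~\ref{energy_estimate_corollary} gives $\frac{d}{dt}E^s\lesssim(1+\varkappa)(E^s+(E^s)^2)$; comparison with the scalar ODE $z'=A(z+z^2)$, $A=C_s(1+\varkappa)$, keeps $E^s(t)\leqslant 2E^s(0)$ for $t\leqslant\big(2A(1+2E^s(0))\big)^{-1}$, whence coercivity returns $\norm{\eta(t),v(t)}_{H_\varkappa^{s+1/2}\times H^s}\leqslant C\norm{\eta_0,v_0}_{H_\varkappa^{s+1/2}\times H^s}$ with $C<C^\star$, strictly improving the first bootstrap assumption. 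Feeding this back into the $L^\infty$ bound on $\partial_t\eta$ and shrinking the time makes $\norm{\eta(t)-\eta_0}_{L^\infty}$ arbitrarily small, strictly improving the margins as well (in the low regularity case this is automatic). Thus the bootstrap closes and yields a definite $T_0>0$, taken as the minimum of the ODE time and the margin-persistence time; both are explicit and depend only on $\varkappa$ and on the monotone upper bound for $E^s(0)$, so $T_0$ is a non-increasing function of $\varkappa$ and of $\norm{\eta_0,v_0}_{H_\varkappa^{s+1/2}\times H^s}$. Were $T_0\geqslant T^\star$, the uniform bound \eqref{a_priori_estimate} would contradict the blow-up alternative \eqref{blow_up_alternative}, so in fact $T_0<T^\star$.

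The step I expect to be the main obstacle is the coupling inside this continuity argument: the energy inequality is available only where non-cavitation (hence coercivity) holds, while non-cavitation is itself propagated only through control of $\norm{\eta}_{L^\infty}$ by the energy norm. One must therefore calibrate $C^\star$ and the relaxed margins so that \emph{both} the energy bound and the non-cavitation margin are strictly improved on the bootstrap interval, and check that every threshold, and the resulting $T_0$, stay monotone in $\varkappa$ and in $\norm{\eta_0,v_0}_{H_\varkappa^{s+1/2}\times H^s}$ while the final constant $C$ remains independent of $\varkappa$ and $\mu$.
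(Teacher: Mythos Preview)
Your proposal is correct and follows essentially the same approach as the paper: a continuity/bootstrap argument built on the modified energy $E^s$, with non-cavitation (hence coercivity) propagated either by the smallness of the $H^1_\varkappa\times H^{1/2}$-norm for $s\leqslant 3/2$ or by integrating $\partial_t\eta$ in $L^\infty$ for $s>3/2$, and the Gr\"onwall step closed by comparison with the scalar ODE for $E^s$. The paper differs only cosmetically: it writes the ODE as $(E/(1+E))' \leqslant c(1+\varkappa)\,E/(1+E)$ and gives explicit formulas $T_0=\min\{T_1,T_2\}$ for the ODE time and the non-cavitation persistence time, but the logic and the monotonicity conclusions are identical to yours.
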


\begin{proof}
We closely follow the arguments in \cite{Kalisch_Pilod}
since we have essentially the same energy estimates.
The main difference lies in the control of coercivity
of the modified energy \eqref{energy}
for small $s$.
Let $h_0, H_0$ define non-cavitation of $\eta_0$
according to Definition \ref{noncavitation_definition}.
Regard $h = h_0/2$ and $H = H_0 + h_0/2$.
If the wave $\eta$ satisfies the non-cavitation
condition associated with $h, H$ then
there exist positive constants $c_0(h)$, $C_0(H)$
such that
	\[
		c_0 \norm{\eta, v} _{H_{\varkappa}^{s + 1/2} \times H^s}^2
		\leqslant		
		E^s (\eta, v)
		\leqslant		
		C_0 \norm{\eta, v} _{H_{\varkappa}^{s + 1/2} \times H^s}^2
	\]
by coercivity of the energy.
These constants depend only on $h_0, H_0$.
They are used to
define the time set
\[
	\mathcal T =
	\left \{
		T \in (0, T^{\star})
		\ : \
		\sup _{ t \in [0, T] }
		\norm{\eta(t), v(t)} _{H_{\varkappa}^{s + 1/2} \times H^s}
		\leqslant
		3 \sqrt{C_0/c_0}
		\norm{\eta_0, v_0} _{H_{\varkappa}^{s + 1/2} \times H^s}
	\right \}
\]
that is non-empty and closed in $(0, T^{\star})$
by the solution continuity.
Moreover, for $\widetilde T = \sup \mathcal T$ we have either
$\widetilde T < T^{\star}$
and so $\widetilde T \in \mathcal T$
or $\widetilde T = T^{\star} = +\infty$
by the blow-up alternative \eqref{blow_up_alternative}.
Introduce $T_0 = \min \{ T_1, T_2\}$ with
\[
	T_1 = \frac 1{C_1 (1 + \varkappa)} \log
	\left(
		1 + \frac 1
		{
			1 + C_1 (1 + \varkappa) C_0
			\norm{\eta_0, v_0} _{H_{\varkappa}^{s + 1/2} \times H^s}^2
		}
	\right)
	,
\]
\[
	T_2 =
	\left\{
	\begin{aligned}
		\frac {h_0}
		{
			C_2
			\left(
				\norm{\eta_0, v_0}
				_{H_{\varkappa}^{s + 1/2} \times H^s}
				+
				\norm{\eta_0, v_0}
				_{H_{\varkappa}^{s + 1/2} \times H^s}^2
			\right)		
		}
		&
		\ \mbox{ for }
		s > 3/2
		\\
		1 
		&
		\ \mbox{ otherwise }
	\end{aligned}
	\right.
	,
\]
where $C_1, C_2$ are two big positive constants to be fixed
below in the proof.
The idea is to show that these constants can be chosen, independently
on the initial data, in such a way that $T_0 \in \mathcal T$
or equivalently $T_0 \leqslant \widetilde T$.

Assume the opposite $\widetilde T < T_0$.
Firstly, we will check that the non-cavitation condition
holds on $[0, \widetilde T]$.
Indeed, in the low regularity case $ s \in (1/2, 3/2]$
it is assumed smallness of the initial data and so
$H_{\varkappa}^1 \times H^{1/2}$-norm of the solution
stays small with time evolution by
Lemma \ref{energy_bound_lemma} and
Corollary \ref{reg_energy_bound_corollary}.
In particular, the wave satisfies the non-cavitation condition.
For $s > 3/2$ one can estimate $\eta$ using the first equation
in System \eqref{capillarity_sys}
(or in System \eqref{reg_capillarity_sys}) as follows
\[
	\eta(x, t) = \eta_0(x) + \int_0^t \partial_t \eta(x, t')dt'
	,
\]
where
\begin{multline*}
	\norm{\partial_t \eta} _{L^{\infty}}
	\leqslant
	\norm{\partial_x v} _{L^{\infty}}
	+
	\norm{\tanh D (\eta v)} _{L^{\infty}}
	+
	\varkappa \mu \norm{ |D| \eta } _{L^{\infty}}
	\\	
	\lesssim
	\norm{ \partial_x v } _{H^{s-1}}
	+ \norm{ \eta } _{H^{s - 1}} \norm{ v } _{H^{s - 1}}
	+ \varkappa \norm{ \partial_x \eta } _{H^{s - 1}}
\end{multline*}
with the implicit constant independent on $\mu \in (0, 1)$, obviously.
Hence
\[
	\norm{\partial_t \eta} _{L^{\infty}}
	\lesssim
	\norm{\eta_0, v_0} _{H_{\varkappa}^{s + 1/2} \times H^s}
	+ \norm{\eta_0, v_0} _{H_{\varkappa}^{s + 1/2} \times H^s}^2
\]
uniformly on $(0, \widetilde T] \subset \mathcal T$.
Thus we have
\[
	\left \lVert
		\int_0^t \partial_t \eta(x, t')dt'
	\right \rVert _{L^{\infty}}
	\leqslant
	\widetilde T \sup _{ t \in (0, \widetilde T] }
	\norm{\partial_t \eta(t)} _{L^{\infty}}
	\leqslant
	\frac {h_0}2
\]
for big enough $C_2$ since $\widetilde T < T_2$.
As a result the non-cavitation
\[
	h - 1 =
	h_0/2 - 1 \leqslant \eta \leqslant H_0 + h_0/2
	= H
\]
holds on $\mathbb R \times [0, \widetilde T ]$.
Without loss of generality one can assume
that for $s \leqslant 3/2$ the non-cavitation of
$\eta$ is governed by the same constants $h, H$.

Let $E(t) = E^s(\eta, v)(t)$ be the energy defined by \eqref{energy}
and $E_0 = E(0)$.
For System \eqref{capillarity_sys}
(or for System \eqref{reg_capillarity_sys}) we
have the a priori energy estimate
given in its differential form 
by Corollary \ref{energy_estimate_corollary}.
It can be rewritten in the form
\[
	\left(
		\frac E{1 + E}
	\right)'
	\leqslant c (1 + \varkappa) \frac E{1 + E}
	.
\]
A straightforward use of Gr\"onwal's inequality gives
\[
	E(t)
	\left(
		1 - \frac{E_0}{1 + E_0} e^{c (1 + \varkappa) t}
	\right)
	\leqslant \frac{E_0}{1 + E_0} e^{c (1 + \varkappa) t}
\]
for any $t \in [0, \widetilde T ]$
with $c$ depending only on $h$, $s$.
Note that
\[
	e^{c (1 + \varkappa) t} \leqslant
	1 + \frac 1
	{
		1 + C_1 (1 + \varkappa) E_0
	}
\]
for any $C_1 \geqslant c$ and
$0 \leqslant t \leqslant \widetilde T < T_1$.
In particular,
\[
	\frac{E_0}{1 + E_0} e^{c (1 + \varkappa) t} \leqslant
	\frac{ ( C_1 (1 + \varkappa) )^{-1} + E_0}{1 + E_0} < 1
\]
if in addition $C_1 (1 + \varkappa) > 1$.
Thus
\[
	E(t) \leqslant \frac 1
	{
		\left(
			\frac{E_0}{1 + E_0} e^{c (1 + \varkappa) t}
		\right) ^{-1} - 1
	}
	\leqslant
	E_0 \frac
	{ 2 + C_1 (1 + \varkappa) E_0 }
	{ 1 + (C_1 (1 + \varkappa) - 1)E_0 }
	\leqslant
	2E_0
\]
if in addition $C_1 (1 + \varkappa) \geqslant 2$.
As a result setting $C_1 = \max \{ 2, c \}$ we have
\[
	\norm{\eta(t), v(t)} _{H_{\varkappa}^{s + 1/2} \times H^s}
	\leqslant
	\sqrt{2C_0/c_0}
	\norm{\eta_0, v_0} _{H_{\varkappa}^{s + 1/2} \times H^s}
\]
for all $t \in [0, \widetilde T ]$.
Taking into account $\widetilde T < T^{\star}$
and continuity of the solution one can find
$\widetilde T < T' < T^{\star}, T_0$
such that on $[0, T']$ holds
\[
	\norm{\eta(t), v(t)} _{H_{\varkappa}^{s + 1/2} \times H^s}
	\leqslant
	2 \sqrt{C_0/c_0}
	\norm{\eta_0, v_0} _{H_{\varkappa}^{s + 1/2} \times H^s}
\]
which contradicts the definition of $\widetilde T$.
Therefore, we showed that $T_0 \leqslant \widetilde T$
concluding the main part of the proof.

It is left to specify the dependence of $T_0$
on the initial data and the surface tension.
From its definition
one can see that
$T_0$ is non-increasing as
a function of the initial data norm
for each $\varkappa > 0$ fixed.
One can also see straightaway that
$T_0$ is non-increasing as
a function of $\varkappa$
for each $(\eta_0, v_0)$ and $s > 3/2$ fixed.
To the same conclusion one can easily come
in the case $s \leqslant 3/2$,
taking into account that %for the regularised
%system \eqref{reg_capillarity_sys}
the smallness assumption imposed on
the initial data norm is $\varkappa$-independent
according to Corollary \ref{reg_energy_bound_corollary}.
\end{proof}

Studying the low surface tension regime
in the last section, we will appeal to the following remark.

\begin{remark}
\label{uniform_a_priori_estimate_remark}
Suppose that $\varkappa \in (0, K]$.
Then
\(
	\norm{u_0} _{H_{\varkappa}^{s + 1/2} \times H^s}
	\leqslant	
	\norm{u_0} _{H_{K}^{s + 1/2} \times H^s}
\)
and
\(
	T_0(K) \leqslant T_0(\varkappa)
\).
Thus $T_0(K)$ and the smallness parameter of
$H_{K}^1 \times H^{1/2}$-norm of the initial data
$u_0 = (\eta_0, v_0)$
can serve as bounds independent of $\varkappa$
instead of the corresponding bounds given
in the statement of the lemma.
\end{remark}

\begin{lemma}
\label{persistence_of_regularity_lemma}
	Suppose $s > 1/2$, $\varkappa > 0$ and
	functions
	\(
		\eta, v \in C^1 \left( (0, T);
		H^{\infty}(\mathbb{R}) \right)
	\)
	solve System \eqref{capillarity_sys}
	(or the regularised system \eqref{reg_capillarity_sys}
	with $\mu \in (0, 1)$ and $p=1$).
	Then if $s < 1$ the following holds true
	\[
		\frac{d}{dt} E^s (\eta, v)
		\leqslant
		C_s (1 + \varkappa)
		\left(
			1 + \norm{ v } _{L^{\infty}}
			+ \norm{\eta, v} _{H_{\varkappa}^1 \times H^{1/2}}^2
		\right)
		\norm{\eta, v} _{H_{\varkappa}^{s + 1/2} \times H^s}^2
		,
	\]
	and if $s \geqslant 1$ then
	\[
		\frac{d}{dt} E^s (\eta, v)
		\leqslant
		C_s (1 + \varkappa)
		\left(
			1 + \norm{\eta, v}
			_{H_{\varkappa}^{s + 1/4} \times H^{s - 1/4}}^2
		\right)
		\norm{\eta, v} _{H_{\varkappa}^{s + 1/2} \times H^s}^2
		.
	\]
	Moreover, the constant $C_s$ does not depend on $\varkappa$, $\mu$.
\end{lemma}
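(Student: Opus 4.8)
The plan is to re-run the computations in the proofs of Lemma \ref{energy_lemma} and Lemma \ref{regularised_energy_lemma} verbatim, up to the decomposition $\frac{d}{dt}E^s(\eta,v)=I_1+\dots+I_8$ (augmented by $\widetilde I_1\leqslant 0$ and $\widetilde I_2$ in the regularised case), and then to sharpen the estimate of each $I_j$ so that the top norm $\norm{\eta,v}_{H_\varkappa^{s+1/2}\times H^s}$ enters \emph{at most quadratically}, the leftover derivatives being absorbed into one sub-top factor that becomes the prefactor. The organising principle is that every $I_j$ is a tri- or quadrilinear integral with a fixed number of derivatives, so at most two of its factors can be carried simultaneously in the top norms $\norm{\eta}_{H^{s+1/2}}$ and $\norm{v}_{H^s}$; the surviving factor always has spare regularity and is estimated in a weaker norm. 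First I would dispatch the already favourable integrals: $I_2$ and $I_4$ give $\norm{\eta}_{H^{s-1/2}}^2\norm{v}_{H^s}$ via Lemma \ref{Selberg_Tesfahun_lemma}, and $I_3$ gives $\norm{v}_{L^\infty}\norm{v}_{H^s}^2$ by H\"older and the Moser product bound; both are already of the desired shape, one factor being recognised as sub-top and handled with $x\leqslant 1+x^2$.

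The dichotomy $s\gtrless 1$ in the statement is exactly the Kato--Ponce threshold of Lemma \ref{Kato_Ponce_lemma}. For $s\geqslant 1$ I would systematically use the embedding $\dot H^{1/4}\hookrightarrow L^4$ together with Kato--Ponce at the exponents $p_i=4$, which lets every spare factor be deposited in the intermediate norm $\norm{\eta,v}_{H_\varkappa^{s+1/4}\times H^{s-1/4}}$; since $s-1/4\geqslant 3/4>1/2$ and $s+1/4\geqslant 1$, all the weaker norms ($\norm{v}_{L^\infty}$, $H^1\times H^{1/2}$, etc.) collapse into $1+\norm{\eta,v}_{H_\varkappa^{s+1/4}\times H^{s-1/4}}^2$, which is why the $s\geqslant 1$ conclusion is the cleaner one. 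For $s<1$ the exponents $p_1=\frac{1}{1-s}$, $p_2=\frac{2}{2s-1}$ of Lemma \ref{energy_lemma} must be used instead, and the spare factor lands either in $L^\infty$ (legitimate since $s>1/2$) or in $H^1\times H^{1/2}$. The genuinely quartic integrals $I_6$ and $I_8$ are treated by noting that for $s<1$ all four factors sit at regularity $\leqslant H^{s-1/2}\hookrightarrow H^1\times H^{1/2}$: writing the quadratic sub-product $\eta v$ in $L^2$ through $L^4\times L^4$ and the two remaining $v$-factors through $\dot H^{1/4}\hookrightarrow L^4$ produces the prefactor $\norm{\eta,v}_{H^1\times H^{1/2}}^2$ (and its intermediate analogue when $s\geqslant 1$).

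The dangerous integral $I_1+I_5$, the whole point of the modifier, requires no new idea here: I would inherit the cancellation and the two commutator bounds for $\norm{[J^{s+1/2},\eta]v}_{L^2}$ and $\norm{[J,\eta]J^{s-1/2}v}_{L^2}$ from Lemma \ref{energy_lemma}, the only modification being that the $v$-norm they generate ($\norm{v}_{H^{1/2}}+\norm{v}_{L^\infty}$ for $s<1$, $\norm{v}_{H^{s-1/4}}$ for $s\geqslant 1$) is retained as a prefactor rather than crudely bounded by $\norm{v}_{H^s}$, giving $I_1+I_5\lesssim\varkappa(\text{prefactor})\norm{\eta}_{H^{s+1/2}}^2$ and hence the factor $(1+\varkappa)$. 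For the regularised system I would discard $\widetilde I_1\leqslant 0$ and observe that the part of $\widetilde I_2$ carrying the half-derivative overshoot $\norm{v}_{\dot H^{s+1/2}}$ (present only because $p=1$) is absorbed into the dissipation supplied by $\widetilde I_1$ through Young's inequality, precisely the mechanism of Lemma \ref{energy_dissipation_lemma}; the residue is sub-top and fits the prefactor.

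The main obstacle is keeping the top norm to the second power in the terms that are cubic in $v$ when $s$ is near $1/2$, the representative case being $I_7=-\frac12\int \partial_x v\,(J^{s-\frac12}v)^2$. After the symmetrisation $\partial_x=|D|^{1/2}\sgn D\,|D|^{1/2}$ its three factors sit at orders $\tfrac12,\ s-\tfrac12,\ s$, and a naive $L^4\times L^4\times L^2$ split drags the order-$\tfrac12$ factor into $\dot H^{3/4}$, which overshoots $H^s$ once $s<3/4$. I would avoid this by choosing the H\"older triple $p_1=\frac{1}{1-s}$, $p_2=\frac{2}{2s-1}$, $p_3=2$, so that the order-$\tfrac12$ and order-$s$ factors are measured at the top level $\norm{v}_{H^s}$ while the order-$(s-\tfrac12)$ factor drops to $\norm{v}_{H^{1/2}}$ by Sobolev embedding, yielding $\norm{v}_{H^s}^2\norm{v}_{H^{1/2}}$ uniformly for $s\in(1/2,1)$; the Leibniz remainder is controlled by Lemma \ref{LeibnizRule_lemma} in the same spirit. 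The delicate point throughout is that for $s$ close to $1/2$ there is less than half a derivative of slack, so the sub-top factor must be placed in $L^\infty$ or $H^{1/2}$ with no margin, and one must check that the positive-order operators $J^{s-1/2}$ and $|D|^{1/2}$ never fall on the $L^\infty$ slot.
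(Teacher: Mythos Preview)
Your overall plan matches the paper's, but there is one genuine gap: the treatment of $I_2$ and $I_4$. The bound $\norm{\eta}_{H^{s-1/2}}^2\norm{v}_{H^s}$ you quote is \emph{not} of the desired shape. All three factors are top-norm factors: $\norm{\eta}_{H^{s-1/2}}$ is literally a summand of $\norm{\eta,v}_{H_\varkappa^{s+1/2}\times H^s}$, and $\norm{v}_{H^s}$ is the top $v$-norm. None of the three can be placed in the required prefactor uniformly in $\varkappa$. For $s<1$ the only $\varkappa$-independent $\eta$-control in $\norm{\eta,v}_{H_\varkappa^1\times H^{1/2}}$ is $\norm{\eta}_{L^2}$, strictly weaker than $\norm{\eta}_{H^{s-1/2}}$; and $\norm{v}_{H^s}$ is not bounded by $\norm{v}_{L^\infty}$ or $\norm{v}_{H^{1/2}}$. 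The same obstruction persists for $s\geqslant 1$, since the intermediate norm carries $\eta$ only in $H^{s-3/4}$ (plus a $\sqrt{\varkappa}$-weighted $H^{s+1/4}$ piece) and $v$ only in $H^{s-1/4}$. If you run Lemma~\ref{Selberg_Tesfahun_lemma} on $I_2=\int (J^{2s-1}\tanh D\eta)\,\eta\, v$ with the two $\eta$-factors pinned at $H^{s-1/2}$, the constraints force the $v$-factor to $H^{1/2+}$, which just misses the prefactor; relaxing one $\eta$-factor above $H^{s-1/2}$ costs a negative power of $\varkappa$.

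The paper's remedy, which it explicitly flags as needed for $\varkappa$-independence of $C_s$, is to \emph{combine} $I_2$ and $I_4$ into the commutator
\[
I_2+I_4 = i\int(J^{s-1/2}\tanh D\eta)\,[J^{s-1/2},\eta]v .
\]
For $s\in(1/2,3/2)$ one approximates $J^{s-1/2}$ by $|D|^{s-1/2}$ and applies Lemma~\ref{LeibnizRule_lemma} to get $\norm{[J^{s-1/2},\eta]v}_{L^2}\lesssim \norm{\eta}_{H^{s-1/2}}(\norm{v}_{L^\infty}+\norm{v}_{L^2})$, hence $I_2+I_4\lesssim\norm{\eta}_{H^{s-1/2}}^2(\norm{v}_{L^\infty}+\norm{v}_{L^2})$, which now fits; for $s>3/2$ the Kato--Ponce estimate on $[J^{s-1/2},\eta]$ gives the analogue. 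A secondary remark: your absorption of the $\widetilde I_2$ overshoot into the dissipation $\widetilde I_1$ via Young is workable but unnecessary; the paper applies Lemma~\ref{Selberg_Tesfahun_lemma} so that the dangerous factor lands as $\norm{J^{s-1/2}|D|v}_{H^{-1/2}}\leqslant\norm{v}_{H^s}$, and no half-derivative excess ever appears.
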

\begin{proof}
The estimates obtained while proving
Lemmas \ref{energy_lemma}, \ref{regularised_energy_lemma}
need to be refined for $s > 1/2$ as follows.
We stick to the notations used in the corresponding proofs.
Recall Identity \eqref{dE} and note that
$\widetilde I_1$, $I_1 + I_5$, $I_3$, $I_6$, $I_8$
need not to be refined.
So it is left to reconsider only the integrals
$\widetilde I_2$, $I_2$, $I_4$, $I_7$.
Note that by Lemma \ref{Selberg_Tesfahun_lemma} we have
\begin{multline*}
	\widetilde I_2
	\lesssim
	\varkappa
	\norm{ J^{s - 1/2} v } _{L^2}
	\norm{ J^{s - 1/2} v } _{H^{1/2}}
	\norm{ |D| \eta } _{H^{s - 1/2}}
	+
	\varkappa
	\norm{ \eta } _{H^s}
	\norm{ J^{s - 1/2} v } _{H^{1/2}}
	\norm{ J^{s - 1/2}|D| v } _{H^{-1/2}}
	\\
	\lesssim
	(1 + \varkappa)
	\norm{\eta, v} _{H_{\varkappa}^s \times H^{s - 1/2}}
	\norm{\eta, v} _{H_{\varkappa}^{s + 1/2} \times H^s}^2
	.
\end{multline*}
In order to refine $I_7$ we need to estimate
\[
	\left \lVert
		\left( \sgn D |D|^{1/2} v \right)
		J^{s - 1/2 } v 
	\right \rVert _{L^2}
	\lesssim
	\left \lVert
		\left( \sgn D |D|^{1/2} v \right)
	\right \rVert _{L^{p_1}}
	\left \lVert
		J^{s - 1/2 } v 
	\right \rVert _{L^{p_2}}
\]
following from H\"older's inequality
with
$p_1(s) = \frac{1}{1-s}$, $p_2(s) = \frac{2}{2s-1}$
for $s \in (\frac 12, 1)$
and $p_1 = p_2 = 4$ in case $s \geqslant 1$.
Implementing the Sobolev embedding and gathering
the rest of $I_7$ one obtains
\[
	I_7 \lesssim
	\norm{ v } _{H^s}^2
	\left\{
	\begin{aligned}
		\norm{ v } _{H^{1/2}}
		&
		\ \mbox{ for }
		s \in (1/2, 1)
		\\
		\norm{ v } _{H^{s - 1/4}}
		&
		\ \mbox{ for }
		s \geqslant 1
	\end{aligned}
	\right.
	.
\]
It turns out that $I_2$ and $I_4$ should be estimated together
in order to make sure that the constant $C_s$ in the statement
does not depend on $\varkappa$.
Summing $I_2$ and $I_4$ one obtains
\[
	I_2 + I_4 =
	i \int \left( J^{s - 1/2} \tanh D \eta \right)
	\left(
		J^{s - 1/2} (\eta v)
		- \eta J^{s - 1/2 } v
	\right)
	.
\]
Firstly, we regard the case $s > 3/2$ and appeal to
the Kato-Ponce inequality \eqref{Kato_Ponce} to estimate
the commutator above as
\[
	\norm{ \left[ J^{s - 1/2}, \eta \right]v }_{L^2}
	\lesssim
	\lVert \partial_x \eta \rVert _{L^{p_1}}
	\lVert J^{s - 3/2} v \rVert _{L^{p_2}}
	+
	\lVert J^{s - 1/2} \eta \rVert _{L^2}
	\lVert v \rVert _{L^{\infty}}
	.
\]
Taking $p_1(s) = \frac{1}{2-s}$, $p_2(s) = \frac{2}{2s-3}$
for $s \in \left( \frac 32, 2 \right)$
and $p_1 = p_2 = 4$ in case $s \geqslant 2$
one deduces
\[
	I_2 + I_4 \lesssim
	\norm{ \eta } _{H^{s - 1/2}}^2
	\left\{
	\begin{aligned}
		\norm{ v } _{H^{1/2}}
		+ \norm{ v } _{L^{\infty}}
		&
		\ \mbox{ for }
		s \in (3/2, 2)
		\\
		\norm{ v } _{H^{s - 5/4}}
		+ \norm{ v } _{L^{\infty}}
		&
		\ \mbox{ for }
		s \geqslant 2
	\end{aligned}
	\right.
	.
\]
Secondly, in the case $s = 3/2$
the commutator is estimated straightforwardly as
\[
	\norm{ \left[ J^{s - 1/2}, \eta \right]v }_{L^2}
	\lesssim
	\norm{ \eta } _{H^1}
	\norm{ v } _{H^1}
	+
	\norm{ \eta } _{L^{\infty}}
	\norm{ v } _{H^1}
	,
\]
and so
\[
	I_2 + I_4 \lesssim
	\norm{ \eta } _{H^{s - 1/2}}^2
	\norm{ v } _{H^{s - 1/2}}
	.
\]
Finally, regarding the left case $s \in (1/2, 3/2)$
we firstly approximate the Bessel potential $J^{s - 1/2}$
by the Riesz potential $|D|^{s - 1/2}$ in the commutator as
\[
	\norm{
		\left(
			J^{s - 1/2} - |D|^{s - 1/2}
		\right)
		(\eta v)
		- \eta
		\left(
			J^{s - 1/2} - |D|^{s - 1/2}
		\right)
		v
	} _{L^2}
	\lesssim
	\norm{ \eta } _{L^2} \norm{ v } _{L^{\infty}}
	+
	\norm{ \eta } _{L^2} \norm{ v } _{L^2}
	,
\]
and then appealing to the Leibniz rule \eqref{LeibnizRule}
we obtain
\[
	\norm{
		|D|^{s - 1/2} (\eta v)
		- \eta |D|^{s - 1/2} v
	} _{L^2}
	\lesssim
	\norm{ |D|^{s - 1/2} \eta } _{L^2} \norm{ v } _{L^{\infty}}
	.
\]
Hence for $s \in (1/2, 3/2)$ the sum of $I_2$ and $I_4$ is estimated as
\[
	I_2 + I_4 \lesssim
	\norm{ \eta } _{H^{s - 1/2}}^2
	\left(
		\norm{ v } _{L^{\infty}}
		+
		\norm{ v } _{L^2}
	\right)
	.
\]
Thus gathering all the parts one obtains
\[
	\widetilde I_1 + \widetilde I_2
	+ I_1 + \ldots + I_8
	\lesssim
	(1 + \varkappa)
	\norm{\eta, v} _{H_{\varkappa}^{s + 1/2} \times H^s}^2
	\left\{
	\begin{aligned}
		1 + \norm{ v } _{L^{\infty}}
		+ \norm{\eta, v} _{H_{\varkappa}^1 \times H^{1/2}}^2
		&
		\ \mbox{ for }
		s \in (1/2, 1)
		\\
		1 + \norm{\eta, v}
		_{H_{\varkappa}^{s + 1/4} \times H^{s - 1/4}}^2
		&
		\ \mbox{ for }
		s \geqslant 1
	\end{aligned}
	\right.
\]
which are the desired estimates.
\end{proof}

Knowing coercivity of the energy $E^s$, controlled
either by the smallness or
by the non-cavitation of the initial data,
one can deduce from the lemma that the time of existence
depends only on
\(
	\norm{\eta_0, v_0} _{H_{\varkappa}^{s' + 1/2} \times H^{s'}}
\),
where $1/2 < s' < s$.
Taking into account the boundedness of
\(
	\norm{\eta, v} _{H_{\varkappa}^1 \times H^{1/2}}
	,
\)
holding true at least for small initial data,
one can get a stronger result thanks to
the Brezis-Gallouet limiting embedding \eqref{Brezis_inequality}.
In order to exploit it we need the following
Gr\"onwall inequality.

\begin{lemma}
[Gr\"onwall inequality]
	Let $y$ be an absolutely continuous positive
	function defined on some interval $[0, T]$.
	Suppose that almost everywhere
	\[
		y' \leqslant Ay \log y
	\]
	where $A > 0$ is constant.
	Then there exists $C > 0$ independent of $T$
	such that
	\[
		y(t) \leqslant \exp \left( Ce^{At} \right)
		.
	\]
\end{lemma}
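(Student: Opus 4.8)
The plan is to linearise the differential inequality by passing to the logarithm. First I would record that on the compact interval $[0, T]$ the absolutely continuous function $y$ is bounded and, being positive and continuous, attains a strictly positive minimum; thus $y$ takes values in some interval $[m, M]$ with $0 < m \leqslant M < \infty$. On $[m, M]$ the logarithm is $C^1$ with derivative $1/y$ bounded by $1/m$, hence Lipschitz there, so the composition $z := \log y$ is again absolutely continuous on $[0, T]$ and satisfies $z' = y'/y$ almost everywhere. Dividing the hypothesis $y' \leqslant A y \log y$ by the positive quantity $y$ then turns it into the linear differential inequality
\[
	z' \leqslant A z
	\quad \text{a.e. on } [0, T].
\]

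The key step is to remove the sign ambiguity of $z$, which can be negative wherever $y < 1$, by means of an integrating factor. I would set $w(t) = z(t) e^{-At}$; this $w$ is absolutely continuous, and $w' = (z' - A z) e^{-At} \leqslant 0$ almost everywhere, so $w$ is non-increasing. Consequently $w(t) \leqslant w(0)$ for every $t$, which is exactly $z(t) \leqslant z(0) e^{At}$, that is,
\[
	\log y(t) \leqslant e^{At} \log y(0).
\]
Observe that this step is valid irrespective of the sign of $z$, which is precisely what the factor $e^{-At}$ buys us.

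Exponentiating gives $y(t) \leqslant \exp\left( e^{At} \log y(0) \right)$, and it remains only to absorb the sign of $\log y(0)$ into the constant. I would take $C = \max\{ 1, \log y(0) \}$, which depends on the initial value alone and not on $T$; then $C > 0$ and $C \geqslant \log y(0)$, so multiplying the latter by the positive factor $e^{At}$ yields $e^{At} \log y(0) \leqslant C e^{At}$, whence $y(t) \leqslant \exp\left( C e^{At} \right)$ as claimed. The only mild subtlety is this sign bookkeeping, handled by the integrating factor; once the inequality has been reduced to the linear form $z' \leqslant A z$, no real obstacle remains, and the resulting $C$ is manifestly $T$-independent.
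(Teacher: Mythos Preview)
Your proof is correct and takes a genuinely different route from the paper's. The paper argues by comparison: it writes down the candidate bound $z(t) = \exp(Ce^{At})$ with $C$ chosen so that $z(0) > y(0)$, computes $(y/z)' \leqslant A(y/z)\log(y/z)$, and uses a continuity argument to conclude that the ratio $y/z$, being initially below $1$, has negative derivative and therefore stays below $1$ for all time. You instead linearise directly by passing to $z = \log y$, obtain the linear inequality $z' \leqslant Az$, and integrate via the factor $e^{-At}$. Your argument is arguably cleaner in that it sidesteps the bootstrap step implicit in the paper's proof (one must check that $y/z$ never reaches $1$), and your careful handling of the sign of $\log y(0)$ via the integrating factor is more explicit than the paper's treatment. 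The paper's comparison approach, on the other hand, makes the structure of the bound $\exp(Ce^{At})$ appear naturally rather than after the fact. Both yield a constant $C$ depending only on $y(0)$.
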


\begin{proof}

Denote the right hand side by $z(t) = \exp \left( Ce^{At} \right)$,
where we take $C > 0$ such that $z(0) > y(0)$.
Regard the derivative
\[
	\left(
		\frac yz
	\right)'
	=
	\frac{y'z - yz'}{z^2}
	\leqslant
	A
	\frac yz
	\log
	\frac yz
	,
\]
where the latter is less than zero
at least for $t = 0$.
So the fraction $y/z$ decreases and stays always below
the unity.

\end{proof}

\begin{corollary}
[Persistence of regularity]
\label{persistence_of_regularity_corollary}
	In the conditions of the a priori estimate lemma
	\ref{a_priori_estimate_lemma}
	the following holds true
	\[
		\norm{\eta(t), v(t)} _{H_{\varkappa}^{s + 1/2} \times H^s}
		\leqslant
		\exp
		\left(
			Ce^{C (1 + \varkappa) t}
		\right)
	\]
	provided $s < 1$,
	and if $s \geqslant 1$ then
	\[
		\norm{\eta(t), v(t)} _{H_{\varkappa}^{s + 1/2} \times H^s}
		\leqslant
		\norm{\eta_0, v_0} _{H_{\varkappa}^{s + 1/2} \times H^s}
		\exp
		\left(
			C (1 + \varkappa) t + C (1 + \varkappa) \int_0^t
			\norm{\eta, v}
			_{H_{\varkappa}^{s + 1/4} \times H^{s - 1/4}}^2
		\right)
	\]
	where the constant $C> 0 $ does not depend on $\varkappa$, $\mu$.
	In particular, the maximal time of existence
	$T^{\star} = +\infty$ provided
	\(
		\norm{\eta_0, v_0} _{H_{\varkappa}^1 \times H^{1/2}}
	\)
	is small enough.	
\end{corollary}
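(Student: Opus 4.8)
The plan is to feed the two differential inequalities of Lemma \ref{persistence_of_regularity_lemma} into the two Gr\"onwall-type lemmas, using the coercivity of the modified energy and, in the range $s<1$, the logarithmic Brezis--Gallouet embedding \eqref{Brezis_inequality}. Throughout I write $N(t)=\norm{\eta(t),v(t)}_{H_\varkappa^{s+1/2}\times H^s}$. Under the smallness (for $s\leqslant 3/2$) or non-cavitation (for $s>3/2$) hypothesis of Lemma \ref{a_priori_estimate_lemma}, the coercivity lemma gives $E^s(\eta,v)\sim N^2$, so it suffices to bound $E^s$; moreover, by Lemma \ref{energy_bound_lemma} (and Corollary \ref{reg_energy_bound_corollary} in the regularised case) the $H_\varkappa^1\times H^{1/2}$-norm stays small along the flow, so $\norm{v}_{H^{1/2}}$ is bounded by a fixed constant. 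For $s<1$ I would apply \eqref{Brezis_inequality} to $v$: since $\norm{v}_{H^{1/2}}$ is bounded and $\norm{v}_{H^s}\lesssim N$, this yields $\norm{v}_{L^\infty}\lesssim 1+\sqrt{\log(1+N)}$. Setting $y=e+E^s$, so that $\log y\geqslant 1$, and using $E^s\sim N^2$ to write $\sqrt{\log(1+N)}\lesssim\sqrt{\log y}$, the first inequality of Lemma \ref{persistence_of_regularity_lemma} becomes $y'\lesssim (1+\varkappa)(1+\sqrt{\log y})\,y\lesssim (1+\varkappa)\,y\log y$, the last step using $1\leqslant\sqrt{\log y}\leqslant\log y$. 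This is precisely the hypothesis of the Gr\"onwall lemma above with $A=C(1+\varkappa)$, and it delivers the double-exponential bound $N(t)\leqslant\exp\bigl(Ce^{C(1+\varkappa)t}\bigr)$.

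For $s\geqslant 1$ the second inequality of Lemma \ref{persistence_of_regularity_lemma} is already linear in $E^s$ with a time-dependent coefficient. Writing $y=E^s$, one has $y'\lesssim (1+\varkappa)\bigl(1+\norm{\eta,v}^2_{H_\varkappa^{s+1/4}\times H^{s-1/4}}\bigr)\,y$, and the ordinary Gr\"onwall inequality gives $y(t)\leqslant y(0)\exp\bigl(C(1+\varkappa)t+C(1+\varkappa)\int_0^t\norm{\eta,v}^2_{H_\varkappa^{s+1/4}\times H^{s-1/4}}\bigr)$. Taking square roots through the coercivity $E^s\sim N^2$ reproduces the second displayed estimate.

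For the global existence claim I would bootstrap on the regularity index. When $s\in(1/2,1)$ the double-exponential bound is finite on every finite interval, so the blow-up alternative \eqref{blow_up_alternative} cannot occur and $T^\star=+\infty$. For $s\geqslant 1$ the key observation is that $H_\varkappa^{s+1/4}\times H^{s-1/4}$ is exactly the energy space at the lower level $s'=s-1/4$, which still satisfies $s'>1/2$; hence the integral coefficient above is controlled by the bound already available at level $s'$. Decreasing the index by $1/4$ at each step --- one never jumps over the base window $(1/2,1)$, since the step $1/4$ is shorter than its width $1/2$ --- after finitely many iterations one reaches the subcritical base case, so the integral stays finite on finite intervals. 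Consequently $N$ remains finite on finite intervals and \eqref{blow_up_alternative} forces $T^\star=+\infty$.

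The step I expect to be the main obstacle is the $s<1$ reduction to the form $y'\leqslant Ay\log y$: one must verify that the Brezis--Gallouet output $\sqrt{\log(1+N)}$, combined with the boundedness of $\norm{v}_{H^{1/2}}$ and the coercivity $E^s\sim N^2$, produces exactly the logarithmic growth rate the Gr\"onwall lemma tolerates, and that --- since the constants $C_s$ of Lemma \ref{persistence_of_regularity_lemma} are $\varkappa$- and $\mu$-independent once the factor $1+\varkappa$ is extracted --- the resulting constants inherit this independence.
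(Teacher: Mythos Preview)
Your proposal is correct and follows essentially the same route as the paper: coercivity of $E^s$, the Brezis--Gallouet bound on $\norm{v}_{L^\infty}$ combined with the boundedness of $\norm{v}_{H^{1/2}}$ for the case $s<1$, then the logarithmic Gr\"onwall lemma applied to a shifted energy (the paper uses $y=3+E^s$ where you use $y=e+E^s$), and ordinary Gr\"onwall for $s\geqslant 1$. Your explicit bootstrap by steps of $1/4$ for global existence when $s\geqslant 1$ is a detail the paper leaves implicit.
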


\begin{proof}
The statement is obvious for $s \geqslant 1$.
Suppose $s \in (1/2, 1)$.
By Lemma \ref{energy_bound_lemma}
and Corollary \ref{reg_energy_bound_corollary}
the norm
\(
	\norm{\eta(t), v(t)} _{H_{\varkappa}^1 \times H^{1/2}}
\)
stays bounded with time.
Hence from the Brezis-Gallouet inequality
\eqref{Brezis_inequality} one deduces
\[
	\lVert v(t) \rVert _{L^{\infty}}
	\lesssim 1 +
	\log \left( 3 + \lVert v(t) \rVert _{H^s} \right)
	.
\]
Thus applying Lemma \ref{persistence_of_regularity_lemma}
and taking into account that $E^s$ is coercive
one obtains
\[
	\frac{d}{dt} E^s
	\lesssim
	(1 + \varkappa)
	\left(
		1 +
		\log \left( 3 + E^s \right)
	\right)
	E^s
	.
\]
As a result,
after the application of the previous lemma
with $y = 3 + E^s$,
we have the estimate
\[
	E^s
	\leqslant
	\exp
	\left(
		C e^{C (1 + \varkappa) t}
	\right)
	,
\]
which again due to coercivity of $E^s$ leads to the first inequality
of the corollary after renaming the constant.

\end{proof}
%
%
%%%%%%%%%%%%%%%%%%%%%%%%%%%%%%%%%%%%%%%%%%%%%%%%%%%%%%%%%%%%%%%%%%%%%%%%%%%%%
\section{Proof of Theorems \ref{capillarity_theorem_local} and \ref{capillarity_theorem}}
\label{Proof_of_Theorem}
%%%%%%%%%%%%%%%%%%%%%%%%%%%%%%%%%%%%%%%%%%%%%%%%%%%%%%%%%%%%%%%%%%%%%%%%%%%%%
\setcounter{equation}{0}
%%%%%%%%%%%%%%%%%%%%%%%%%%%%%%%%%%%%%%%%%%%%%%%%%%%%%%%%%%%%%%%%%%%%%%%%%%%%%
%
%
With the a priori estimate \eqref{a_priori_estimate} in hand
we can reapply the local existence Lemma
\ref{regularised_local_existence_lemma}
for the regularised problem \eqref{reg_capillarity_sys}
with $\mu \in (0, 1)$ and $p = 1$
in order to obtain solutions $u^{\mu} = (\eta^{\mu}, v^{\mu})$
on the time interval $[0, T_0]$ defined by
Lemma \ref{a_priori_estimate_lemma}.
Convergence of $u^{\mu}$ as $\mu \to 0$
follows from an adaptation of Lemma \ref{difference_energy_lemma}
to the difference energy \eqref{difference_energy}
with $\eta_j = \eta^{\mu_j}$, $v_j = v^{\mu_j}$
($j = 1,2$) and $0 < \mu_2 < \mu_1 < 1$.
The proof repeats the arguments of
Lemma \ref{difference_energy_lemma} and
Lemma \ref{energy_dissipation_lemma}.
Moreover, using the Gagliardo--Nirenberg interpolation
one can obtain that $u^{\mu}$ converges to some $u$
in
\(
	C \left(
		[0, T_0]; H^{r + 1/2} (\mathbb R) \times H^r (\mathbb R)
	\right)
\)
as $\mu \to 0$ for any $0 < r < s$.
This $u$ is a solution of \eqref{capillarity_sys}
in the distributional sense.
Furthermore, to prove persistence
\(
	u \in
	C \left(
		[0, T_0]; H^{s + 1/2} (\mathbb R) \times H^s (\mathbb R)
	\right)
	,
\)
justify all the previous steps and obtain continuity
of the flow map one has to regularise the initial data
\eqref{capillarity_data} as
\(
	u_0^{\epsilon} = (
		\eta_0 * \rho_{\epsilon},
		v_0 * \rho_{\epsilon}
	)
	,
\)
where $\rho_{\epsilon}$ is an approximation of the identity
parametrised by $0 < \epsilon < 1$
\cite{Bona_Smith, Kato_Ponce87}.
An application of the Bona--Smith argument
in a straightforward standard way
\cite{Bona_Smith, Kenig_Pilod, Linares_Ponce}
results in the persistence and continuous dependence.
We omit further details.

%
%
%%%%%%%%%%%%%%%%%%%%%%%%%%%%%%%%%%%%%%%%%%%%%%%%%%%%%%%%%%%%%%%%%%%%%%%%%%%%%
\section{The two-dimensional problem}
\label{Two_dimensional_problem}
%%%%%%%%%%%%%%%%%%%%%%%%%%%%%%%%%%%%%%%%%%%%%%%%%%%%%%%%%%%%%%%%%%%%%%%%%%%%%
\setcounter{equation}{0}
%%%%%%%%%%%%%%%%%%%%%%%%%%%%%%%%%%%%%%%%%%%%%%%%%%%%%%%%%%%%%%%%%%%%%%%%%%%%%
%
%
In this section we comment briefly on adaptation of the
proof for the two dimensional case.
Firstly, we define the energy norm
\begin{equation}
\label{capillarity_norm2d}
	\norm{ \eta, \mathbf v }
	_{ H_{\varkappa}^{s + 1/2} \times H^s \times H^s }^2
	= \varkappa \lVert \nabla \eta \rVert _{H^{s - 1/2}}^2
	+ \lVert \eta \rVert _{H^{s - 1/2}}^2
	+ \lVert K^{-1} \mathbf v \rVert
	_{ H^{s - 1/2} \times H^{s - 1/2} }^2
\end{equation}
and the modified energy
\begin{equation}
\label{energy2d}
	E^s(\eta, \mathbf v)
	= \frac 12
	\norm{ \eta, \mathbf v }
	_{ H_{\varkappa}^{s + 1/2} \times H^s \times H^s }^2
	+ \frac 12 \int \eta \left| J^{s - 1/2 } \mathbf v \right| ^2
	,
\end{equation}
and then notice that it is coercive provided
the wave $\eta$ either satisfies the noncavitation condition
or has small $H^1$-norm.
Note that the latter does not imply the first one,
since now we do not have embedding of $H^1$
to $L^{\infty}$.
The smallness of
$H_{\varkappa}^1 \times H^{1/2} \times H^{1/2}$-norm
can be controlled by the energy conservation.
Indeed, by
H\"older's inequality and the Sobolev embedding
the cubic part of Hamiltonian \eqref{Hamiltonian2}
is estimated as
\[
	\int  \eta | \mathbf v |^2 dx
	\lesssim
	\norm{\eta} _{L^2}
	\norm{\mathbf v} _{H^{1/2} \times H^{1/2} }^2
	,
\]
and so repeating the arguments given in the proof
of Lemma \ref{energy_bound_lemma} we arrive at the conclusion
that the small enough initial data stays small through
the flow.
For $s > 2$ the noncavitation preserves locally-in-time
due to the first equation in \eqref{wt2d}.

The assumption
$\nabla \times  \mathbf v_0 = 0$
is needed to correctly define
the semigroup associated with the regularised
linear problem.
Indeed, instead of Semigroup \eqref{semigroup}, for
the two dimensional problem we have
\begin{equation*}
	\mathcal S(t) =
	\exp(-\varkappa \mu t |D|^p)
	\mathcal K
	\begin{pmatrix}
		\exp(-itK_{\varkappa}|D|) & 0
		\\
		0 & \exp(itK_{\varkappa}|D|)
	\end{pmatrix}
	\mathcal K^{-1}
	,
\end{equation*}
where
\[
	\mathcal K
	=
	\frac 1{\sqrt 2}
	\begin{pmatrix}
		1 & 1
		\\
		K_{\varkappa} \frac{D_1}{|D|} & -K_{\varkappa} \frac{D_1}{|D|}
		\\
		K_{\varkappa} \frac{D_2}{|D|} & -K_{\varkappa} \frac{D_2}{|D|}
	\end{pmatrix}
	, \quad
	\mathcal K^{-1}
	=
	\frac 1{\sqrt 2}
	\begin{pmatrix}
		1 & \frac{ |D| }{ 2 K_{\varkappa} D_1 }
		& \frac{ |D| }{ 2 K_{\varkappa} D_2 }
		\\
		1 & - \frac{ |D| }{ 2 K_{\varkappa} D_1 }
		& - \frac{ |D| }{ 2 K_{\varkappa} D_2 }
	\end{pmatrix}
\]
with $K_{\varkappa}$
defined by \eqref{K_kappa}.
Note that $\mathcal K^{-1}$ is well defined on
the subspace of 
\(
	H^{s + 1/2} \left( \mathbb R^2 \right) \times
	\left( H^s \left( \mathbb R^2 \right) \right)^2
\)
with the curl free second coordinate.
Moreover, it is easy to show that the condition
$\nabla \times  \mathbf v = 0$
preserves through the flow.
The energy estimates and the rest of the proof of
Theorem \ref{capillarity_theorem2d}
can be done in exactly the same manner as in the one dimensional
case, and so we omit further details.
%
%
%
%
%%%%%%%%%%%%%%%%%%%%%%%%%%%%%%%%%%%%%%%%%%%%%%%%%%%%%%%%%%%%%%%%%%%%%%%%%%%%%
\section{The low capillarity regime}
\label{Low_capillarity_regime}
%%%%%%%%%%%%%%%%%%%%%%%%%%%%%%%%%%%%%%%%%%%%%%%%%%%%%%%%%%%%%%%%%%%%%%%%%%%%%
\setcounter{equation}{0}
%%%%%%%%%%%%%%%%%%%%%%%%%%%%%%%%%%%%%%%%%%%%%%%%%%%%%%%%%%%%%%%%%%%%%%%%%%%%%
%
%
%
%
This section is devoted to analysis of the solution dependence 
on the surface tension $\varkappa \in (0, 1]$.
It allows, for instance, to validate that solutions of
Systems \eqref{capillarity_sys}, \eqref{wt2d}
with $\varkappa = 0$,
that are known to exist \cite{Dinvay_Tesfahun},
do indeed approximate solutions of the
same systems when $\varkappa \ll 1$.
We restrict ourselves to the one dimensional case.
The extension to the two dimensional situation is straightforward.

\begin{theorem}
	Let $s \geqslant 2$ and
	\[
		u^{\varkappa} = ( \eta^{\varkappa}, v^{\varkappa} )
		\in C \left( [0, T];
		H^{s + 1/2}(\mathbb{R}) \times H^s(\mathbb{R}) \right)
		\cap C^1 \left( (0, T);
		H^{s - 1}(\mathbb{R}) \times H^{s - 3/2}(\mathbb{R}) \right)
	\]
	be the solution of
	Problem \eqref{capillarity_sys}, \eqref{capillarity_data}
	for each $\varkappa \in (0, 1]$.
	Then $u^{\varkappa}$ converges to the solution
	$u = (\eta, v)$ of
	Problem \eqref{capillarity_sys}, \eqref{capillarity_data}
	with $\varkappa = 0$ in
	\(
		C \left( [0, T];
		H^{s - 1/2}(\mathbb{R}) \times H^s(\mathbb{R}) \right)
	\)
	as  $\varkappa \to 0$.
\end{theorem}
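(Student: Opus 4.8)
The plan is to rerun the difference‑energy machinery of Sections~\ref{Uniqueness_type_estimate}--\ref{A_priori_estimate}, now comparing the capillary solution $u^{\varkappa}=(\eta^{\varkappa},v^{\varkappa})$ with the limit $u=(\eta,v)$. Setting $\theta=\eta^{\varkappa}-\eta$ and $w=v^{\varkappa}-v$ and subtracting the two copies of \eqref{capillarity_sys}, and writing the $\varkappa=0$ equation for $u$ as the $\varkappa$-equation corrected by the single term $+\,i\varkappa\tanh D D^2\eta$ in its second line, one finds that $(\theta,w)$ solves the difference system \eqref{uniqueness_sys} (with $\eta_1=\eta^{\varkappa}$, $v_1=v^{\varkappa}$, $v_2=v$) together with the forcing $-\,i\varkappa\tanh D D^2\eta$ added to the second equation. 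Since the two solutions share the data \eqref{capillarity_data}, the initial difference vanishes, $\theta(0)=0$ and $w(0)=0$.

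I would measure the difference in the energy attached to the limiting ($\varkappa=0$) structure,
\[
	\mathcal E
	=\frac12\lVert\theta\rVert_{H^{s-1/2}}^2
	+\frac12\lVert K^{-1}w\rVert_{H^{s-1/2}}^2
	+\frac12\int\eta^{\varkappa}\left(J^{s-1/2}w\right)^2,
\]
whose first two terms are equivalent to the $H^{s-1/2}\times H^s$ norm of the statement, and whose cubic modifier plays the same symmetrising role as in \eqref{energy} and \eqref{difference_energy}. Because $s>3/2$ the wave $\eta^{\varkappa}$ satisfies non-cavitation on $[0,T]$, and, since $\varkappa\leqslant1$, Lemma~\ref{a_priori_estimate_lemma} and Remark~\ref{uniform_a_priori_estimate_remark} furnish bounds on $\eta^{\varkappa}$, $v^{\varkappa}$, $v$ that are uniform in $\varkappa$; hence $\mathcal E$ is coercive, $\mathcal E\sim\lVert\theta\rVert_{H^{s-1/2}}^2+\lVert w\rVert_{H^s}^2$, with constants independent of $\varkappa$. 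Differentiating $\mathcal E$, the purely quadratic contributions of $(\theta,w)$ are handled verbatim as in Lemma~\ref{difference_energy_lemma}: the skew-symmetric linear coupling cancels after integration by parts, the transport and nonlinear terms are dominated by $\bigl(1+\lVert\eta^{\varkappa}\rVert_{H^{s+1/2}}^2+\lVert v^{\varkappa}\rVert_{H^s}^2+\lVert v\rVert_{H^s}^2\bigr)\mathcal E$, and the modifier absorbs the antisymmetric capillary commutator through the $I_1+I_5$ mechanism of Lemma~\ref{energy_lemma}.

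The one genuinely new object, and where the main obstacle lies, is the forcing. Its contribution to $\tfrac{d}{dt}\mathcal E$ is
\[
	-i\varkappa\int\left(K^{-1}J^{s-1/2}w\right)K^{-1}J^{s-1/2}\tanh D D^2\eta
	-i\varkappa\int\eta^{\varkappa}\left(J^{s-1/2}w\right)J^{s-1/2}\tanh D D^2\eta,
\]
and, using $K^{-2}\tanh D=D$, the first integral equals $\varkappa\int(J^{s-1/2}w)J^{s-1/2}\partial_x^3\eta$. This costs three derivatives on $\eta$, one and a half more than the available $H^{s+1/2}$ regularity allows, so at the top level $w\in H^s$ it cannot be closed by Cauchy--Schwarz, by commutators, or by the modifier (the two displayed integrals are bilinear and trilinear in distinct arguments, so no cancellation occurs between them). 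The decisive point is that $\eta$ here is the \emph{fixed} limit, uniformly bounded in $H^{s+1/2}$, whereas $\eta^{\varkappa}$ is only controlled there by $\varkappa^{-1/2}$. Consequently, run at the lower level $w\in H^{s-3/2}$, $\theta\in H^{s-2}$, the forcing is bounded by $\varkappa\,\mathcal E^{1/2}\lVert\eta\rVert_{H^{s+1/2}}$, which is admissible and $O(\varkappa)$. This yields $\tfrac{d}{dt}\mathcal E\lesssim A(t)\mathcal E+\varkappa\,\mathcal E^{1/2}$ with $A\in L^1(0,T)$ uniform in $\varkappa$; Grönwall's inequality and $\mathcal E(0)=0$ then give $u^{\varkappa}\to u$ in $C\left([0,T];H^{s-2}\times H^{s-3/2}\right)$ at rate $O(\varkappa)$.

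Finally I would climb back up to the endpoint. Interpolating the rate-$\varkappa$ convergence in $H^{s-2}\times H^{s-3/2}$ against the uniform $H^{s+1/2}\times H^s$ bounds yields convergence in every intermediate norm $C\left([0,T];H^{s'-1/2}\times H^{s'}\right)$ with $s'<s$; the remaining endpoint $s'=s$ is recovered by the Bona--Smith argument already used in Section~\ref{Proof_of_Theorem}, namely mollifying the common data, comparing the smooth approximations (for which the forcing is admissible since then $\eta\in H^{\infty}$) and using the uniform bounds to control the mollification error equicontinuously in $\varkappa$. Throughout, the hard point is precisely the interplay between the singular capillary forcing and the top-order norm: the one-derivative loss in the $\eta$-component of the limiting norm is exactly what the $\varkappa=0$ energy can see, while the full $H^s$ convergence of $v$ is what forces the regularity trade-off and the Bona--Smith upgrade.
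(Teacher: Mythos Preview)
Your overall architecture — establish convergence at a low Sobolev level, then recover the endpoint $H^{s-1/2}\times H^s$ by interpolation against the uniform bounds and a Bona--Smith mollification of the common data — is exactly what the paper does. The gap is in how you set up the low-level estimate.

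You write the difference system as \eqref{uniqueness_sys} (hence with the capillary contribution $-i\varkappa\tanh D\,D^2\theta$ kept in the \emph{linear} part) while measuring it in the $\varkappa=0$ energy $\tfrac12\lVert\theta\rVert_{H^{s-1/2}}^2+\tfrac12\lVert K^{-1}w\rVert_{H^{s-1/2}}^2$ plus modifier. These two choices are incompatible. Without the term $\tfrac{\varkappa}{2}\lVert\partial_x\theta\rVert^2$ in the energy, the capillary linear cross-term does \emph{not} cancel skew-symmetrically: differentiating $\lVert K^{-1}w\rVert_{H^{s-1/2}}^2$ against $-i\varkappa\tanh D\,D^2\theta$ leaves $\varkappa\int(J^{s-1/2}w)J^{s-1/2}\partial_x^3\theta$ uncompensated. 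Likewise the $I_1+I_5$ mechanism from Lemma~\ref{energy_lemma} is unavailable, since $I_1$ there arises precisely from $\tfrac{\varkappa}{2}\tfrac{d}{dt}\lVert\partial_x\eta\rVert_{H^{s-1/2}}^2$; your modifier produces an $I_5$-type term with no $I_1$ to pair against. Dropping to level $(\theta,w)\in H^{s-2}\times H^{s-3/2}$ does not cure this: the dangling term still carries three derivatives on $\theta=\eta^{\varkappa}-\eta$, and $\eta^{\varkappa}$ is only controlled in $H^{s+1/2}$ with weight $\varkappa^{-1/2}$.

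The paper avoids all of this by making the opposite choice: it moves the \emph{whole} capillary piece to the forcing, so that the residual is $-i\varkappa\tanh D\,D^2\eta^{\varkappa}$ (involving $\eta^{\varkappa}$, not $\eta$), and then works at the bare Hamiltonian level $\lVert\theta\rVert_{L^2}^2+\lVert K^{-1}w\rVert_{L^2}^2$ where no cubic modifier is needed at all. There the forcing is bounded by $\varkappa\lVert K^{-1}w\rVert_{L^2}\lVert\partial_x\eta^{\varkappa}\rVert_{H^{3/2}}\leqslant\sqrt{\varkappa}\,\lVert K^{-1}w\rVert_{L^2}\lVert u^{\varkappa}\rVert_{H_{\varkappa}^{5/2}\times H^2}$, and the uniform a~priori bound (Remark~\ref{uniform_a_priori_estimate_remark}, using $s\geqslant2$) gives rate $\sqrt{\varkappa}$. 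So the decisive point you isolate is real, but its resolution is to change the decomposition rather than to rely on the modifier; and the resulting rate is $\sqrt{\varkappa}$, not $\varkappa$.
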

\begin{proof}
By the Bona-Smith argument it is enough to prove the statement
for the smooth initial data $u_0 = (\eta_0, v_0)$ with
\(
	\eta_0, v_0 \in
	H^{\infty}(\mathbb{R})
\).
Moreover, it is enough to prove convergence in
\(
	C \left( [0, T];
	L^2(\mathbb{R}) \times H^{1/2}(\mathbb{R}) \right)
\).
Without loss of generality we can assume that $T$
coincides with $T_0$ defined in Lemma \ref{a_priori_estimate_lemma}.
Note that it can be regarded as independent of $\varkappa \in (0, 1]$
according to Remark \ref{uniform_a_priori_estimate_remark}.
Moreover, we can assume that on the same time interval
$[0, T]$ the solution $u$, corresponding to the zero surface tension,
also satisfies \eqref{a_priori_estimate}
with the same constant $C$ and $\varkappa = 0$.

Define functions $\theta = \eta^{\varkappa} - \eta$,
$w = v^{\varkappa} - v$.
Then $\theta$ and $w$ satisfy the following system
%
%
%\begin{align*}
\begin{equation*}
\left\{
\begin{aligned}
	\theta_t &=
	- \partial_x w - i \tanh D (\theta v + \eta^{\varkappa} w)
	, \\
	w_t &=
	- i \tanh D \theta
	- i \varkappa D^2 \tanh D \eta^{\varkappa}
	- i \tanh D ( ( v^{\varkappa} + v ) w ) / 2
	.
\end{aligned}
\right.
\end{equation*}
%\end{align*}
%
%
Introduce the norm
\begin{equation*}
	\norm{ \theta, w }^2
	=
	\norm{ \theta, w } _{ H_0^1 \times H^{1/2} }^2
	= \lVert \theta \rVert _{L^2}^2
	+ \lVert K^{-1} w \rVert _{L^2}^2
\end{equation*}
and calculate its derivative
\begin{multline*}
	\frac 12 \frac d{dt}
	\norm{ \theta, w }^2
	=
	-i \int \theta \tanh D \left( \theta v + \eta^{\varkappa} w \right)
	-i \varkappa \int \left( K^{-1} w \right)
	K^{-1} D^2 \tanh D \eta^{\varkappa}
	\\
	- \frac i2 \int \left( K^{-1} w \right)
	K^{-1} \tanh D ( ( v^{\varkappa} + v ) w )
	\lesssim
	\norm{ \theta } _{L^2}^2
	\norm{ v } _{H^1}
	+
	\norm{ \theta } _{L^2}
	\norm{ K^{-1} w } _{L^2}
	\norm{ \eta^{\varkappa} } _{H^{1/2}}
	\\
	+
	\varkappa
	\norm{ K^{-1} w } _{L^2}
	\norm{ \partial_x \eta^{\varkappa} } _{H^{3/2}}
	+
	\norm{ K^{-1} w } _{L^2}^2
	\left(
		\norm{ v^{\varkappa} } _{H^1}
		+
		\norm{ v } _{H^1}
	\right)
	.
\end{multline*}
Thus we have
\[
	\frac d{dt}
	\norm{ \theta, w }
	\lesssim
	\norm{ \theta, w }
	\left(
		\norm{ \eta^{\varkappa}, v^{\varkappa} }
		_{H_{\varkappa}^{3/2} \times H^1}
		+
		\norm{ v } _{H^1}
	\right)
	+
	\sqrt{\varkappa}
	\norm{ \eta^{\varkappa}, v^{\varkappa} }
	_{H_{\varkappa}^{5/2} \times H^2}
	,
\]
and so applying the a priori estimate
\eqref{a_priori_estimate} one deduces
\[
	\frac d{dt}
	\norm{ \theta, w }
	\lesssim
	\norm{ u_0 }
	_{H_{\varkappa}^{5/2} \times H^2}
	\left(
		\norm{ \theta, w }
		+
		\sqrt{\varkappa}
	\right)
	.
\]
Taking into account that
at the initial time moment
\(
	\theta(0) = w(0) = 0
\),
one easily obtains
\[
	\norm{ \theta(t), w(t) }
	\leqslant
	\sqrt{\varkappa}
	C \norm{ u_0 } _{H_{\varkappa}^{5/2} \times H^2}
	\left(
		\exp
		\left(
			C \norm{ u_0 } _{H_{\varkappa}^{5/2} \times H^2} t
		\right)
		- 1
	\right)
	,
\]
that tends to zero as $\varkappa \to 0$
uniformly with respect to $t \in [0, T]$.
This concludes the proof.
\end{proof}

\vskip 0.05in
\noindent
{\bf Acknowledgments.}
{
The author is grateful to Didier Pilod,
Achenef Tesfahun and Henrik Kalisch for
fruitful discussions and numerous helpful comments.
The research is supported by the
Norwegian Research Council.
}

\bibliographystyle{amsplain}
\bibliography{bibliography}

\end{document}